\newtheorem{theorem}{Theorem}[section]
\newtheorem{lemma}[theorem]{Lemma}
\newtheorem{corollary}[theorem]{Corollary}
\newtheorem{proposition}[theorem]{Proposition}
\theoremstyle{definition}
\newtheorem{definition}[theorem]{Definition}
\newtheorem{problem}[theorem]{Problem}
\newtheorem{question}[theorem]{Question}
\newtheorem{conjecture}[theorem]{Conjecture}
\theoremstyle{remark}
\newtheorem{example}[theorem]{Example}
\newtheorem{remark}[theorem]{Remark}
\newcommand{\RR}{\mathbb{R}}
\newcommand{\BB}{\mathbf{B}}
\DeclareMathOperator{\diam}{diam}
\DeclareMathOperator{\dist}{dist}
\def\<#1>{\langle #1 \rangle}
\renewcommand{\p@enumii}{}
\newcommand{\acr}{\newline\indent}
\author{Oleksiy Dovgoshey}
\address{\textbf{Oleksiy Dovgoshey}\acr
Institute of Applied Mathematics and Mechanics
of the NAS of Ukraine, \acr Sloviansk, Ukraine \newline\indent
and Department of Mathematics and Statistics, University of Turku, Finland.}
\email{oleksiy.dovgoshey@gmail.com, oleksiy.dovgoshey@utu.fi}
\title{Hausdorff distance between ultrametric balls}
\subjclass[2020]{Primary 54E35, 54E45}
\keywords{Boundedly compact ultrametric, dense discrete set, locally compact ultrametric, Polish ultrametric, separable ultrametric, ultrametric ball}
\begin{document}

\begin{abstract}
Let \((X, d)\) be an ultrametric space and let \(d_H\) be the Hausdorff distance on the set \(\bar{\BB}_X\) of all closed balls in \((X, d)\). Some interconnections between the properties of the spaces \((X, d)\) and \((\bar{\BB}_X, d_H)\) are described. It is established that the space \((\bar{\BB}_X, d_H)\) has such properties as discreteness, local finiteness, metrical discreteness, completeness, compactness, local compactness if and only if the space \((X, d)\) has these properties. Necessary and sufficient conditions for the separability of the space \((\bar{\BB}_X, d_H)\) are also proved.
\end{abstract}

\maketitle

\section{Introduction}

The main object of our studies is the space \(\bar{\BB}_X\) of closed ultrametric balls endowed with the Hausdorff metric \(d_H\).

The Hausdorff distance was introduced by Felix Hausdorff in~1914 \cite{Hau1914}. Later, Edwards \cite{Edw1975} and, independently, Gromov~\cite{Gro1981PMIHS} expanded Hausdorff construction to the set of isometry classes of compact metric spaces. Ultrametric modifications of Gromov---Hausdorff distance and Hausdorff distance were considered in \cite{Qiu2009pNUAA} and \cite{Qiu2014pNUAA}. A robust alternative to Gromov---Hausdorff distance is the so-called Gromov---Wasserstein distance \cite{AAC+2024TGDBS, Mem2011GDATMATOM, MMWW2023TUGD}.

The Hausdorff distance is closely connected with the so-called Wijsman topology, which was firstly studied by Lechicki and Levi~\cite{LL1987BUMI}. In particular, one can easily proves that, for ultrametric space \((X, d)\), a sequence \((\bar{B}_i)_{i \in \mathbb{N}}\) of closed balls is convergent in \((\bar{\BB}_X, d_H)\) if and only if \((\bar{B}_i)_{i \in \mathbb{N}}\) is convergent in the Wijsman topology. It is interesting to note that, for every metric space \((Y, \delta)\), a sequence of subsets of \(Y\) is convergent with respect to the Vietoris topology generated by \(\delta\) if and only if this sequence is Wijsman convergent for all metrics which are topologically equivalent to \(\delta\) \cite{BNLL1992AMPAIS}. See also \cite{BDD2017BBMSSS} for characterization of Wijsman convergence via Wijsman statistical convergence by moduli.

The interesting applications of the Hausdorff distance to the problem of recognition of an object in an image are given in the book William Rucklidge \cite{Ruc1996EVRUTHD}. Some algorithms for computing the Hausdorff distance between of a model and an image are presented in~\cite{HKR1993CIUTHD}.

The interconnections between the Gurvich---Vyalyi representation \cite{GV2012DAM} of a finite ultrametric space \((X, d)\) and the space \((\bar{\BB}_X, d_H)\) are discussed in \cite{Dov2019pNUAA}. An analog of the Gurvich---Vyalyi representation for totally bounded ultrametric spaces was recently obtained in \cite{Dov2025TBUSALFT}, and it is also closely connected with the Hausdorff distance between the ultrametric balls.

The main goal of this paper is to describe properties common to ultrametric spaces and the corresponding spaces of closed balls equipped with the Hausdorff distance.

The paper is organized as follows. Section~\ref{sec2} contains some necessary definitions and facts from the theory of metric spaces. In particular, in Proposition~\ref{p1.12} and Corollary~\ref{c1.13} we give proofs of two equalities connected with dense discrete subsets of metric spaces.

A brief overview of properties of closed ultrametric balls is given in Section~\ref{sec3}. The construction of the Delhomm\'{e}---Laflamme---Pouzet---Sauer ultrametric space \cite{DLPS2008TaiA} is described in Example~\ref{ex2.6} and this example is systematically used future.

The concept of Hausdorff distance is introduced in Section~\ref{sec4}. The main results of this section are Theorem~\ref{t3.11} and Proposition~\ref{p3.21} which describe the dense discrete subsets of the space \((\bar{\BB}_X, d_H)\) for arbitrary ultrametric space \((X, d)\).

In Section~\ref{sec5} it is proved that each of the following properties of ultrametric spaces \((X, d)\) and \((\bar{\BB}_X, d_H)\) can only be possessed simultaneously:
\begin{itemize}
\item discreteness (Theorem~\ref{t4.1}),
\item local finiteness (Theorem~\ref{t4.3}),
\item metrical discreteness (Theorem~\ref{t4.4}),
\item completeness (Theorem~\ref{t3.13}),
\item total boundedness (Theorem~\ref{t3.16}),
\item compactness (Corollary~\ref{c3.17}),
\item local compactness (Theorem~\ref{t5.14}),
\item compactness of bounded closed subsets (Theorem~\ref{t3.17}).
\end{itemize}
Theorem~\ref{t4.7} gives sufficient conditions under which \((X, d)\) and \((\bar{\BB}_X, d_H)\) are isometric. The necessary and sufficient conditions under which the space \((\bar{\BB}_X, d_H)\) is separable proved in Theorem~\ref{t3.14}. Example~\ref{ex3.19} describes a separable ultrametric space \((X, d)\) with non-separable \((\bar{\BB}_X, d_H)\).

The last Section~\ref{sec6} contains some conjectures related to subject of the paper.

\section{Preliminaries. Metric spaces}
\label{sec2}

Let \(\RR^+\) denote the set \([0, \infty)\) and \(X\) be a nonempty set. A \textit{metric} on $X$ is a function $d\colon X\times X\rightarrow \RR^+$ such that for all \(x\), \(y\), \(z \in X\)
\begin{enumerate}
\item $d(x,y)=d(y,x)$,
\item $(d(x,y)=0)\Leftrightarrow (x=y)$,
\item \(d(x,y)\leq d(x,z) + d(z,y)\).
\end{enumerate}

A metric space \((X, d)\) is \emph{ultrametric} if the \emph{strong triangle inequality}
\[
d(x,y)\leq \max \{d(x,z),d(z,y)\}
\]
holds for all \(x\), \(y\), \(z \in X\). In this case the function \(d\) is called \emph{an ultrametric} on \(X\).

\begin{definition}\label{d2.2}
Let \((X, d)\) and \((Y, \delta)\) be metric spaces. A bijective mapping \(\Phi \colon X \to Y\) is called to be an \emph{isometry} of \((X, d)\) and \((Y, \delta)\) if
\[
d(x,y) = \delta(\Phi(x), \Phi(y))
\]
holds for all \(x\), \(y \in X\). Two metric spaces are \emph{isometric} if there is an isometry of these spaces.
\end{definition}

Let \(A\) be a nonempty subset of a metric space \((X, d)\). The quantity
\begin{equation*}
\diam A :=\sup\{d(x,y) \colon x, y\in A\}
\end{equation*}
is the \emph{diameter} of \(A\). If the inequality \(\diam A < \infty\) holds, then we say that \(A\) is a \emph{bounded} subset of \(X\). If \((X, d)\) is an ultrametric space, then, using the strong triangle inequality, one can easily prove the equality
\begin{equation}\label{e2.3}
\diam A = \sup\{d(p, p_0) \colon p \in A\}
\end{equation}
for every \(p_0 \in A\).

Let \(c\) be a point of a metric space \((X, d)\) and let \(r \geqslant 0\). We define the \emph{closed ball} \(\bar{B}_r(c)\) with the center \(c\) and the radius \(r\) as
\begin{equation}\label{e2.4}
\bar{B}_r(c) := \{x \in X \colon d(c, x) \leqslant r\}.
\end{equation}
Thus, the equality \(\bar{B}_r(c) = \{c\}\) holds if \(r = 0\).

In what follows we will denote by \(\bar{\BB}_{X}\) the set of all closed balls in the metric space \(X\) and by \(\bar{\BB}_{X}^{0}\) the subset of \(\bar{\BB}_{X}\) consisting of all closed balls with strictly positive radii.

\begin{definition}\label{d2.1}
Let \((X, d)\) be a metric space. A \emph{ballean} of the metric space \((X, d)\) is the set \(\bar{\BB}_{X}\).
\end{definition}

\begin{remark}\label{r2.2}
Definition~\ref{d2.1} above is used in papers \cite{Dov2019pNUAA, Dov2020TaAoG}. The term ballean is also used to mean a set \(X\) endowed with a special family of subsets of \(X \times X\). See, for example, \cite{PP2020MS, PB2003, PZ2007}.
\end{remark}

Below we collect some necessary results related to compact metric spaces, totally bounded metric spaces, and complete metric spaces.

\begin{proposition}\label{p1.5}
Let \((X, d)\) be a metric space, let \(Y\) be a nonempty subset of \(X\), and let \(K \subseteq Y \subseteq X\). Then \(K\) is a compact subset of \((Y, d|_{Y \times Y})\) if and only if \(K\) is a compact subset of \((X, d)\).
\end{proposition}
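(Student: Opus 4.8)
The plan is to use the standard fact that compactness is an intrinsic (topological) property: a topological space is compact if and only if it is compact as a subspace of any ambient space in which it embeds. Concretely, the key observation is that the metric topology on $K$ induced from $(X,d)$ coincides with the metric topology on $K$ induced from $(Y, d|_{Y\times Y})$, because $d|_{Y\times Y}$ is just the restriction of $d$, and restricting a metric to a subset and then forming the subspace topology is the same as forming the subspace topology directly. Thus $K$ carries one and the same topology in both cases, and "$K$ is compact" means the same thing on both sides.

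First I would spell out the subspace-topology identification: for $Z \in \{X, Y\}$, the open sets of the subspace $K$ inside $(Z, d|_{Z\times Z})$ are the sets $U \cap K$ with $U$ open in $(Z, d|_{Z\times Z})$. Using open balls as a base, one checks that $\{B_r^{(X)}(x) \cap K : x \in K,\ r > 0\}$ and $\{B_r^{(Y)}(y) \cap K : y \in K,\ r>0\}$ generate the same topology on $K$, since $B_r^{(Y)}(y) = B_r^{(X)}(y) \cap Y$ and $K \subseteq Y$, so $B_r^{(Y)}(y)\cap K = B_r^{(X)}(y) \cap K$. Hence the two induced topologies on $K$ are literally equal. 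Second, I would recall that compactness of a subset is defined via open covers drawn from the subspace topology, so since that topology is the same in both cases, $K$ is compact in $(Y, d|_{Y\times Y})$ if and only if $K$ is compact in $(X,d)$.

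Alternatively, one can argue directly with covers without naming the subspace topology: given an open cover of $K$ by sets open in $(X,d)$, intersect each with $Y$ to get an open cover of $K$ by sets open in $(Y, d|_{Y\times Y})$, extract a finite subcover, and lift back; and symmetrically in the other direction using that every open subset of $(Y, d|_{Y\times Y})$ is of the form $U \cap Y$ with $U$ open in $(X,d)$. Either route is completely routine.

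I do not expect any real obstacle here; the statement is essentially a restatement of the transitivity of the subspace construction together with the topological invariance of compactness. The only point requiring a line of care is the verification that $d|_{Y\times Y}$ induces on $K$ the same topology as $d$ does — i.e. that passing through the intermediate space $Y$ changes nothing — and this is immediate from $K \subseteq Y$.
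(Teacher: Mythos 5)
Your argument is correct: the observation that $B_r^{(Y)}(y)\cap K = B_r^{(X)}(y)\cap K$ for $y\in K$ shows the two induced topologies on $K$ coincide, and the cover argument then gives the equivalence. The paper does not prove this proposition at all but simply cites Proposition~5.6 of \cite{Kai2023ACTOMS}; what you have written is exactly the standard transitivity-of-subspaces argument that such a reference contains, so there is nothing to compare beyond noting that your proof is self-contained where the paper defers to the literature.
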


For the proof see, for example, Proposition~5.6 in \cite{Kai2023ACTOMS}.

A proof of the following simple proposition can be found in \cite{Sea2007}, see Theorem~7.9.2.

\begin{proposition}\label{p1.6}
Let \((X, d)\) be a totally bounded metric space and let \(A \subseteq X\) be nonempty. Then the space \((A, d|_{A \times A})\) is also totally bounded.
\end{proposition}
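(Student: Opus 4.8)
The statement asserts that total boundedness is hereditary to subspaces, so the plan is to work directly from the definition of total boundedness and transfer a finite $\varepsilon$-net of $X$ into a finite $\varepsilon$-net of $A$. First I would fix $\varepsilon > 0$. Since $(X, d)$ is totally bounded, there are finitely many points $x_1, \dots, x_n \in X$ with $X = \bigcup_{i=1}^{n} \bar{B}_{\varepsilon/2}(x_i)$ (using radius $\varepsilon/2$ rather than $\varepsilon$ to leave room for the triangle-inequality step that follows). Discarding those indices $i$ for which $\bar{B}_{\varepsilon/2}(x_i) \cap A = \varnothing$, I would then, for each remaining index $i$, choose a point $a_i \in \bar{B}_{\varepsilon/2}(x_i) \cap A$.

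The key step is to check that the finitely many points $a_i$ form an $\varepsilon$-net of $A$. Given $a \in A$, there is some retained index $i$ with $a \in \bar{B}_{\varepsilon/2}(x_i)$, and then $d(a, a_i) \leqslant d(a, x_i) + d(x_i, a_i) \leqslant \varepsilon/2 + \varepsilon/2 = \varepsilon$ by the triangle inequality, so $a \in \bar{B}_{\varepsilon}(a_i)$ where the ball is taken in $(A, d|_{A \times A})$. Hence $A$ is covered by finitely many closed $\varepsilon$-balls centered at points of $A$, and since $\varepsilon > 0$ was arbitrary, $(A, d|_{A \times A})$ is totally bounded.

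There is no real obstacle here: the only point requiring a little care is the standard halving trick, needed because the covering balls of $X$ need not be centered at points of $A$, so one must recenter and absorb the displacement using the triangle inequality. Since this is a textbook fact, I would most likely simply cite \cite{Sea2007} (Theorem~7.9.2) as the excerpt already does, rather than reproduce the argument in full.
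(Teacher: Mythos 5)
Your argument is correct and is the standard halving-and-recentering proof of this textbook fact; the paper itself gives no proof, only the citation to \cite{Sea2007} (Theorem~7.9.2), which is exactly the argument you reproduce, so there is nothing to compare beyond noting that your write-up fills in the routine details the paper leaves to the reference.
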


The next statement is the Heine---Borel theorem (see, for example, Theorem~5.26 in~\cite{Kai2023ACTOMS}).

\begin{theorem}\label{t1.11}
A metric space \((X, d)\) is compact if and only if \((X, d)\) is complete and totally bounded.
\end{theorem}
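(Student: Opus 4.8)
The plan is to prove the two implications separately, using sequential compactness of metric spaces where it is convenient. First, suppose $(X,d)$ is compact. To see that $X$ is totally bounded, fix $\varepsilon > 0$ and consider the open cover consisting of all open balls of radius $\varepsilon$; a finite subcover gives a finite $\varepsilon$-net, so $X$ is totally bounded. For completeness, let $(x_n)$ be a Cauchy sequence in $X$; using that a compact metric space is sequentially compact, pass to a subsequence converging to some $x \in X$, and note that a Cauchy sequence with a convergent subsequence converges to the same limit. Hence $(X,d)$ is complete.

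Conversely, suppose $(X,d)$ is complete and totally bounded but, for contradiction, not compact, and fix an open cover $\mathcal{U}$ of $X$ admitting no finite subcover. I would construct a nested sequence $X = X_0 \supseteq X_1 \supseteq X_2 \supseteq \cdots$ of nonempty subsets with $\diam X_n \leqslant 2^{-n+1}$ for $n \geqslant 1$, each of which cannot be covered by finitely many members of $\mathcal{U}$. The inductive step uses the total boundedness of $X_{n-1}$, which holds by Proposition~\ref{p1.6}, to write $X_{n-1}$ as a finite union of sets of diameter at most $2^{-n}$; if each such piece were finitely coverable by $\mathcal{U}$, then so would be $X_{n-1}$, a contradiction, so some piece can be taken as $X_n$. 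Picking $x_n \in X_n$ yields a Cauchy sequence, which by completeness converges to some $x \in X$. Since $\mathcal{U}$ covers $X$, there is $U \in \mathcal{U}$ with $x \in U$, and openness of $U$ gives $\delta > 0$ with $\bar{B}_{\delta}(x) \subseteq U$. For $n$ large enough that $2^{-n+1} < \delta/2$ and $d(x_n, x) < \delta/2$, the strong estimate $\diam X_n + d(x_n,x) < \delta$ forces $X_n \subseteq \bar{B}_{\delta}(x) \subseteq U$, so $X_n$ is covered by a single member of $\mathcal{U}$, contradicting its choice. Therefore $(X,d)$ is compact.

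The main obstacle is entirely in the converse direction, and specifically in the bookkeeping of the inductive construction: keeping the pieces nonempty, making their diameters shrink geometrically, and verifying at each stage that the property ``not coverable by finitely many members of $\mathcal{U}$'' is inherited by at least one piece. An alternative route would be to first establish that compactness and sequential compactness coincide for metric spaces, then extract a Cauchy subsequence from an arbitrary sequence by the usual diagonal argument against $2^{-k}$-nets and conclude by completeness; this is cleaner to state but rests on the same essentially combinatorial diagonal idea.
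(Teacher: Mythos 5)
Your proposal is essentially correct, but note that the paper does not prove this statement at all: it is quoted as the Heine---Borel theorem with a pointer to Theorem~5.26 of \cite{Kai2023ACTOMS}, so there is no in-paper argument to compare with, and what you have written is the standard textbook proof that the cited source supplies. On the details: the forward direction is fine, though the step ``a compact metric space is sequentially compact'' is itself a nontrivial standard fact; if you want the argument self-contained, you can avoid it by arguing directly that if a Cauchy sequence \((x_n)\) had no limit, then for every \(x \in X\) there would be \(\varepsilon_x > 0\) such that \(\bar{B}_{\varepsilon_x}(x)\) contains only finitely many terms of the sequence (use the Cauchy condition together with the failure of convergence to \(x\)), and a finite subcover of the resulting cover of \(X\) would leave room for only finitely many terms, a contradiction. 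In the converse direction your nested construction works as stated; two small bookkeeping remarks: the piece selected at stage \(n\) is automatically nonempty, since the empty set is covered by the empty (hence finite) subfamily of \(\mathcal{U}\), so nonemptiness needs no separate care; and your diameter estimates are mildly inconsistent (pieces of \(X_{n-1}\) of diameter at most \(2^{-n}\) give \(\diam X_n \leqslant 2^{-n}\), which is stronger than the claimed \(2^{-n+1}\)), which is harmless because all that is used is \(\diam X_n \to 0\). With that, \(x_m \in X_m \subseteq X_n\) for \(m \geqslant n\) gives the Cauchy property, completeness gives the limit \(x\), and the final contradiction---that some \(X_n\) lies inside a single member of \(\mathcal{U}\) although it was chosen not to be finitely coverable---is exactly right.
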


The following theorem can be found, for example, in~\cite{Sea2007} (see Theorem~10.3.2).

\begin{theorem}\label{t1.10}
Let \((X, d)\) be a complete metric space and let \(A \subseteq X\) be nonempty. Then \(A\) is complete if and only if \(A\) is closed in \((X, d)\).
\end{theorem}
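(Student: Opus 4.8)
The plan is to prove both implications directly from the definition of completeness, working with sequences. Throughout I write $d$ also for the restricted metric $d|_{A \times A}$ on $A$; the observation underpinning everything is that a sequence $(a_n)_{n \in \mathbb{N}}$ of points of $A$ is Cauchy in $(A, d|_{A \times A})$ if and only if it is Cauchy in $(X, d)$, and it converges to a point $a \in A$ in $(A, d|_{A \times A})$ if and only if it converges to $a$ in $(X, d)$. Both equivalences are immediate, since the defining inequalities involve only distances between points of $A$.

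For the implication ``$A$ closed $\Rightarrow$ $A$ complete'', I would take an arbitrary Cauchy sequence $(a_n)_{n \in \mathbb{N}}$ in $(A, d|_{A \times A})$. By the observation above it is Cauchy in $(X, d)$, so, as $(X, d)$ is complete, it converges to some $x \in X$. Being a limit of a sequence of points of $A$, the point $x$ lies in the closure of $A$ in $(X, d)$; since $A$ is closed, $x \in A$. Hence $(a_n)_{n \in \mathbb{N}}$ converges to $x$ in $(A, d|_{A \times A})$, which shows that every Cauchy sequence in $A$ converges in $A$, i.e.\ $(A, d|_{A \times A})$ is complete.

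For the converse, ``$A$ complete $\Rightarrow$ $A$ closed'', I would show that $A$ contains all of its limit points in $(X, d)$. Let $x$ be a point of $X$ belonging to the closure of $A$; then there is a sequence $(a_n)_{n \in \mathbb{N}}$ in $A$ with $a_n \to x$ in $(X, d)$. A convergent sequence is Cauchy, so $(a_n)_{n \in \mathbb{N}}$ is Cauchy in $(X, d)$ and therefore in $(A, d|_{A \times A})$. Completeness of $A$ yields a point $a \in A$ with $a_n \to a$ in $(A, d|_{A \times A})$, hence also $a_n \to a$ in $(X, d)$. Since limits of sequences in a metric space are unique, $a = x$, so $x \in A$, and $A$ is closed in $(X, d)$.

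There is no substantive obstacle here; the only points needing (routine) care are the transfer of the Cauchy and convergence conditions between $(X, d)$ and $(A, d|_{A \times A})$ and the uniqueness of limits in a metric space, both of which are standard.
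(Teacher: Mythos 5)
Your proof is correct: both directions are the standard sequence argument, transferring the Cauchy and convergence conditions between $(A, d|_{A\times A})$ and $(X, d)$ and using uniqueness of limits. The paper gives no proof of its own for this statement --- it simply cites Theorem~10.3.2 of \cite{Sea2007} --- and your argument is exactly the standard one found in such textbook treatments, so there is nothing to reconcile.
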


A similar result is true for compact metric spaces (see, for example, Theorem~12.2.3 in~\cite{Sea2007}).

\begin{theorem}\label{t1.10.1}
Let \((X, d)\) be a compact metric space and let \(A \subseteq X\) be nonempty. Then \(A\) is compact if and only if \(A\) is closed in \((X, d)\).
\end{theorem}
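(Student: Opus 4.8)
The plan is to prove the two implications separately, using the Heine--Borel theorem (Theorem~\ref{t1.11}) together with Theorem~\ref{t1.10} and Proposition~\ref{p1.6} for the more substantial direction, and recalling that, by Proposition~\ref{p1.5}, compactness of $A$ as a subset of $(X,d)$ is the same as compactness of the metric subspace $(A, d|_{A \times A})$.

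For the ``only if'' part I would show, more generally, that any compact subset $A$ of an arbitrary metric space is closed, which in particular applies inside the compact space $(X, d)$. The argument I would give is sequential: if $x \in X$ lies in the closure of $A$, choose a sequence $(a_n)_{n \in \mathbb{N}}$ in $A$ with $a_n \to x$; compactness of $A$ yields a subsequence converging to some $a \in A$, and since the whole sequence already converges to $x$ in the metric (hence Hausdorff) space $(X, d)$, uniqueness of limits forces $a = x$, so $x \in A$. Therefore $A$ is closed. Note this direction does not use compactness of $X$ at all.

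For the ``if'' part, suppose $A$ is closed in $(X, d)$. By Theorem~\ref{t1.11}, the compact space $(X, d)$ is complete and totally bounded. Since $A$ is closed in the complete space $(X, d)$, Theorem~\ref{t1.10} gives that $(A, d|_{A \times A})$ is complete; since $(X, d)$ is totally bounded, Proposition~\ref{p1.6} gives that $(A, d|_{A \times A})$ is totally bounded. Applying Theorem~\ref{t1.11} once more, now to $(A, d|_{A \times A})$, we conclude that $A$ is compact.

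The only step that is not a direct appeal to a quoted result is the sequential argument in the ``only if'' direction, and even that is entirely routine; I do not expect any genuine obstacle.
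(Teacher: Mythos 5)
Your proof is correct. Note that the paper does not actually prove this statement at all: it simply cites Theorem~12.2.3 of \cite{Sea2007}, so your argument is a genuine self-contained proof assembled from the paper's own quoted ingredients. Your ``if'' direction is exactly the natural route through the Heine---Borel theorem (Theorem~\ref{t1.11}), Theorem~\ref{t1.10} and Proposition~\ref{p1.6}, and it is flawless. In the ``only if'' direction, the one step that goes beyond the quoted results is the extraction of a convergent subsequence, which silently uses that compact metric spaces are sequentially compact --- standard, and acceptable as routine, but you could stay entirely inside the paper's toolkit by arguing instead: since \(A\) is compact, Theorem~\ref{t1.11} applied to \((A, d|_{A\times A})\) (together with Proposition~\ref{p1.5}) shows that \(A\) is complete; since \((X,d)\) is compact, it is complete by Theorem~\ref{t1.11}; hence \(A\) is closed in \((X,d)\) by Theorem~\ref{t1.10}. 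That variant makes both directions pure bookkeeping with the already quoted statements, and, as you observed, neither version of the ``only if'' direction actually needs compactness of \(X\) (your sequential argument needs none, the bookkeeping variant only needs completeness of \(X\)).
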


\begin{remark}\label{r1.7}
Some new results related to compact ultrametric spaces and totally bounded ultrametric spaces can be found in \cite{Dov2024SUPF, DCR2025CUSGBLSG, DK2022LTGCCaDUS, DV2025TBUSGBLR, BDK2022UACMG, DS2022TRoUCaS}.
\end{remark}

Let us now recall some facts about separable metric spaces.

Let \((X, d)\) be a metric space and let \(A \subseteq X\). A set \(S \subseteq A\) is said to be \emph{dense} in \(A\) if for every \(a \in A\) there is a sequence \((s_n)_{n\in \mathbb{N}}\) of points of \(S\) such that
\begin{equation*}
\lim_{n \to \infty} d(a, s_n) = 0.
\end{equation*}

\begin{definition}\label{d1.14}
A metric space \((X, d)\) is said to be \emph{separable} if \(X\) contains a dense countable subset.
\end{definition}

For the proof of the next lemma see, for example, Corollary~8.11 in~\cite{Kai2023ACTOMS}.

\begin{lemma}\label{l1.16}
Let \((X, d)\) be a separable metric space and let \(A\) be a nonempty subset of \(X\). Then the metric space \((A, d|_{A \times A})\) is also separable.
\end{lemma}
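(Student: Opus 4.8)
The plan is to build a countable dense subset of \(A\) out of a countable dense subset of \(X\). By Definition~\ref{d1.14}, fix a countable set \(S = \{s_n : n \in \mathbb{N}\}\) that is dense in \((X, d)\). For each pair \((n, k) \in \mathbb{N} \times \mathbb{N}\), look at the set of those points of \(A\) whose distance to \(s_n\) is strictly less than \(1/k\). Whenever this set is nonempty, select a single point \(a_{n,k}\) from it; when it is empty, leave \(a_{n,k}\) undefined. Let \(T\) be the collection of all the points \(a_{n,k}\) that were actually selected. Then \(T \subseteq A\) and \(T\) is at most countable, being indexed by a subset of \(\mathbb{N} \times \mathbb{N}\).

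Next I would check that \(T\) is dense in \(A\) in the sense of the definition stated just before Definition~\ref{d1.14}. Fix \(a \in A\). For every \(k \in \mathbb{N}\), density of \(S\) in \(X\) yields an index \(n_k\) with \(d(a, s_{n_k}) < 1/k\); in particular the set of points of \(A\) within distance \(1/k\) of \(s_{n_k}\) is nonempty, since it contains \(a\), so \(a_{n_k, k}\) is defined and \(d(s_{n_k}, a_{n_k, k}) < 1/k\). The triangle inequality then gives
\[
d(a, a_{n_k, k}) \leq d(a, s_{n_k}) + d(s_{n_k}, a_{n_k, k}) < \frac{2}{k},
\]
so \((a_{n_k, k})_{k \in \mathbb{N}}\) is a sequence of points of \(T\) with \(d(a, a_{n_k, k}) \to 0\) as \(k \to \infty\). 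Hence \(T\) is a dense, at most countable subset of \(A\), and therefore \((A, d|_{A \times A})\) is separable.

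The argument uses neither completeness, nor total boundedness, nor the strong triangle inequality, so it is valid for arbitrary metric spaces; this matches the cited Corollary~8.11 in~\cite{Kai2023ACTOMS}. The only subtlety worth flagging is the appeal to the axiom of countable choice when picking the points \(a_{n,k}\): there are only countably many of them, so countable (or dependent) choice suffices and no stronger selection principle is needed. Everything else is routine bookkeeping with the indices \(n\), \(k\).
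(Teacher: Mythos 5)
Your argument is correct: the selection of at most countably many points \(a_{n,k}\) of \(A\) near the points of a countable dense subset of \(X\), followed by the triangle-inequality estimate \(d(a, a_{n_k,k}) < 2/k\), is the standard proof that separability is hereditary for metric spaces, and your remark that only countable choice is needed is accurate. The paper itself gives no proof of Lemma~\ref{l1.16}, deferring instead to Corollary~8.11 of \cite{Kai2023ACTOMS}, and your proposal is precisely the argument one finds there, so there is nothing to compare beyond noting that you have supplied the omitted details correctly.
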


The following lemma is a part of Corollary~4.1.16 from~\cite{Eng1989}.

\begin{lemma}\label{l1.20}
Let \((X, d)\) be a separable metric space. Then every set of pairwise disjoint \(\bar{B} \in \bar{\BB}_X^{0}\) is countable.
\end{lemma}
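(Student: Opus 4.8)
The plan is to build an injection from any pairwise disjoint family of positive‑radius closed balls into a fixed countable dense set, which forces such a family to be countable. First I would invoke Definition~\ref{d1.14} to fix a countable set \(S = \{s_n \colon n \in \mathbb{N}\}\) that is dense in \((X, d)\). The key observation is that every closed ball \(\bar{B}_r(c)\) with \(r > 0\) contains the open ball \(\{x \in X \colon d(c, x) < r\}\), and this open ball is nonempty because it contains its center \(c\) (as \(d(c,c) = 0 < r\)).

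Now let \(\mathcal{F} \subseteq \bar{\BB}_X^{0}\) be a family whose members are pairwise disjoint. For each \(B \in \mathcal{F}\) choose a center \(c_B\) and a radius \(r_B > 0\) with \(B = \bar{B}_{r_B}(c_B)\). Applying the density of \(S\) to the point \(c_B\), we obtain a sequence \((s_n)_{n \in \mathbb{N}}\) of points of \(S\) with \(\lim_{n \to \infty} d(c_B, s_n) = 0\); in particular \(d(c_B, s_n) < r_B\) for all sufficiently large \(n\), so we may select a point \(\sigma(B) \in S\) with \(d(c_B, \sigma(B)) < r_B\). Then \(\sigma(B)\) lies in the open ball \(\{x \in X \colon d(c_B, x) < r_B\} \subseteq B\). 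Since distinct members of \(\mathcal{F}\) are disjoint, the map \(B \mapsto \sigma(B)\) is injective, so \(\mathcal{F}\) injects into the countable set \(S\) and is therefore countable.

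There is no real obstacle here; the only point that deserves care is checking that the paper's sequential formulation of density actually produces a point of \(S\) strictly inside the ball, which is immediate from \(\lim_{n} d(c_B, s_n) = 0\). Note that the strong triangle inequality plays no role, so the statement holds for every separable metric space, as claimed.
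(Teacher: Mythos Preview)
Your argument is correct: fixing a countable dense set \(S\) and assigning to each ball \(B \in \mathcal{F}\) a point \(\sigma(B) \in S \cap B\) gives an injection \(\mathcal{F} \hookrightarrow S\), because pairwise disjointness forces distinct balls to receive distinct points. The only subtlety---that every \(\bar{B}_r(c)\) with \(r>0\) actually meets \(S\)---is handled exactly as you say, via the sequential density of \(S\) at the center \(c\).

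As for comparison with the paper: the paper does not prove this lemma at all. It simply records it as ``a part of Corollary~4.1.16 from~\cite{Eng1989}'' and moves on. Your proof is the classical one (essentially the argument behind the fact that separable spaces have the countable chain condition), so there is nothing to contrast methodologically---you have supplied the standard justification that the paper chose to outsource to a reference.
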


\begin{definition}\label{d1.21}
Let \((Y, \delta)\) be a metric space and let \(S \subseteq Y\). A point \(s \in S\) is called an \emph{isolated} point of \(S\) if there exists \(r > 0\) such that \(S \cap \bar{B}_r(s) = \{s\}\). The collection of all isolated points of \(S\) in \((Y, \delta)\) will be denoted by \(\operatorname{iso}_Y(S)\). In the case \(S = Y\) we will also use the notation \(\operatorname{iso}(Y)\) instead of \(\operatorname{iso}_Y(Y)\).
\end{definition}

\begin{proposition}\label{p1.22}
Let \((X, d)\) be a separable metric space. Then \(\operatorname{iso}(X)\) is a countable set.
\end{proposition}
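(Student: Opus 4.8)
The plan is to deduce Proposition~\ref{p1.22} directly from Lemma~\ref{l1.20} by building, for each isolated point, a ball that witnesses its isolation, and then checking that these balls can be chosen pairwise disjoint so that countability follows. First I would fix, for every \(x \in \operatorname{iso}(X)\), a radius \(r_x > 0\) with \(\bar{B}_{r_x}(x) = \{x\}\); such a radius exists by Definition~\ref{d1.21} applied with \(S = X\). This already shows each \(x \in \operatorname{iso}(X)\) is itself a closed ball of positive radius, namely \(\bar{B}_{r_x}(x) \in \bar{\BB}_X^0\). The family \(\{\bar{B}_{r_x}(x) : x \in \operatorname{iso}(X)\}\) consists of singletons, hence of pairwise disjoint members of \(\bar{\BB}_X^0\) (distinct isolated points give distinct singletons, and two distinct singletons are disjoint). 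By Lemma~\ref{l1.20}, every set of pairwise disjoint elements of \(\bar{\BB}_X^0\) in a separable metric space is countable, so this family is countable, and therefore \(\operatorname{iso}(X)\) — being in bijection with it via \(x \mapsto \bar{B}_{r_x}(x)\) — is countable.

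An even more elementary route, which avoids invoking Lemma~\ref{l1.20}, is the following: let \(S \subseteq X\) be a countable dense subset. For each \(x \in \operatorname{iso}(X)\), pick \(r_x > 0\) with \(\bar{B}_{r_x}(x) = \{x\}\); by density there is \(s_x \in S\) with \(d(x, s_x) < r_x\), but then \(s_x \in \bar{B}_{r_x}(x) = \{x\}\), forcing \(s_x = x\). Hence \(x \in S\), i.e.\ \(\operatorname{iso}(X) \subseteq S\), and a subset of a countable set is countable. I would likely present this second argument as the main proof, since it is self-contained and immediate, and perhaps remark that it also follows from Lemma~\ref{l1.20}.

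There is essentially no obstacle here; the only point requiring a word of care is the case \(X = \emptyset\), which is excluded by the standing convention (following the definition of a metric, \(X\) is a nonempty set), or the trivial observation that if \(\operatorname{iso}(X) = \emptyset\) the claim holds vacuously. The subtlety, such as it is, lies only in being careful that the radius \(r_x\) depends on \(x\) and need not be bounded below; this causes no trouble because we never take an infimum over the \(r_x\) — each is used only locally to locate a point of the dense set inside \(\{x\}\).
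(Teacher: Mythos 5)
Both of your arguments are correct. Your first argument is essentially the paper's own proof: the paper also observes that \(\{p\} \in \bar{\BB}_X^0\) for every \(p \in \operatorname{iso}(X)\) (these singletons being pairwise disjoint) and then invokes Lemma~\ref{l1.20}; the only difference is that the paper first passes through Lemma~\ref{l1.16} to note that \(\operatorname{iso}(X)\) is itself separable, a detour your version shows is not really needed since Lemma~\ref{l1.20} can be applied in the ambient separable space \((X,d)\) directly. Your second argument, which you propose as the main proof, is genuinely different and more elementary: it bypasses Lemma~\ref{l1.20} entirely by showing \(\operatorname{iso}(X) \subseteq S\) for any countable dense \(S\), using only the definition of density (a sequence \(s_n \in S\) with \(d(x,s_n) \to 0\) eventually lies in \(\bar{B}_{r_x}(x) = \{x\}\), so \(x \in S\)). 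What the paper's route buys is uniformity with the rest of Section~\ref{sec2} (it reuses the disjoint-balls lemma imported from~\cite{Eng1989}); what your direct route buys is self-containedness and a slightly stronger conclusion, namely that the isolated points are contained in every countable dense subset, not merely that they are countable. Your remarks on the empty case and on the radius \(r_x\) depending on \(x\) are accurate and harmless.
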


\begin{proof}
The set \(\operatorname{iso}(X)\) evidently is countable if this is the empty set. Suppose that \(\operatorname{iso}(X) \neq \varnothing\). Then, by Lemma~\ref{l1.16}, \(\operatorname{iso}(X)\) is a separable subspace of \((X, d)\). Now using the definition of the set \(\bar{\BB}_X^0\) and Definition~\ref{d1.21} we see that \(\{p\} \in \bar{\BB}_X^0\) holds for every \(p \in \operatorname{iso}(X)\). Hence, \(\operatorname{iso}(X)\) is countable by Lemma~\ref{l1.20}.
\end{proof}

\begin{remark}
New results related to separable ultrametric spaces can be found in~\cite{DR2025LTGSALFU, DS2022TRoUCaS, MSD2023CSOUSWA}.
\end{remark}

\begin{definition}\label{d1.22}
Let \((Y, \delta)\) be a metric space and let \(S \subseteq Y\). A point \(c \in Y\) is called an \emph{accumulation point} of \(S\) if, for every \(r > 0\), the closed ball \(\bar{B}_r(c)\) contains a point of \(S\) other than \(c\) itself. The set of all accumulation points of \(S\) in \((Y, \delta)\) will be denoted by \(\operatorname{acc}_Y(S)\). We will also use the notation \(\operatorname{acc}(Y)\) instead of \(\operatorname{acc}_Y(Y)\).
\end{definition}

It is easy to prove that the equality
\begin{equation}\label{s1:e5}
\operatorname{iso}_Y(S) \cap \operatorname{acc}_Y(S) = \varnothing
\end{equation}
holds for every \(S \subseteq Y\). Moreover, the equality
\begin{equation}\label{s1:e6}
\operatorname{iso}_Y(S) \cup \operatorname{acc}_Y(S) = Y
\end{equation}
is true if and only if \(S\) is a dense subset of \((Y, \delta)\).

The following result seems to be known, but the author cannot give a precise reference here.

\begin{proposition}\label{p1.12}
Let \((Y, \delta)\) be a metric space and let \(A\) and \(B\) be dense subsets of \(Y\). Then the equality
\begin{equation}\label{p1.12:e1}
\operatorname{iso}_Y(A) = \operatorname{iso}_Y(B)
\end{equation}
holds.
\end{proposition}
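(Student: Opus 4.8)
The plan is to show both inclusions by a symmetric argument, so it suffices to prove $\operatorname{iso}_Y(A) \subseteq \operatorname{iso}_Y(B)$. Fix $p \in \operatorname{iso}_Y(A)$. By Definition~\ref{d1.21} there is some $r > 0$ with $A \cap \bar{B}_r(p) = \{p\}$; in particular $p \in A$. The first thing to observe is that $p$ cannot be an accumulation point of $Y$: if it were, then since $A$ is dense, every ball $\bar{B}_\rho(p)$ with $0 < \rho \le r$ would have to contain a point of $A$ distinct from $p$ (one approximates a point of $Y \cap \bar{B}_{\rho/2}(p) \setminus \{p\}$ closely enough by points of $A$), contradicting $A \cap \bar{B}_r(p) = \{p\}$. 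Hence $p \in \operatorname{iso}(Y)$, i.e.\ there is $r' > 0$ with $Y \cap \bar{B}_{r'}(p) = \{p\}$.

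Once we know $\{p\} = Y \cap \bar{B}_{r'}(p)$, the rest is immediate: by~\eqref{s1:e6} applied with $S = B$ (valid since $B$ is dense), every point of $Y$ lies in $\operatorname{iso}_Y(B) \cup \operatorname{acc}_Y(B)$; but $p$ is isolated in $Y$, so $p \notin \operatorname{acc}_Y(B)$ (any punctured ball around $p$ that is small enough contains no point of $Y$ at all, in particular none of $B$). Also $p \in B$, because the dense set $B$ must approach $p$ by points arbitrarily close to $p$, and the only such point available in $\bar{B}_{r'}(p)$ is $p$ itself. Therefore $p \in B \cap (\operatorname{iso}_Y(B) \cup \operatorname{acc}_Y(B))$ with $p \notin \operatorname{acc}_Y(B)$, which forces $p \in \operatorname{iso}_Y(B)$. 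This proves $\operatorname{iso}_Y(A) \subseteq \operatorname{iso}_Y(B)$, and exchanging the roles of $A$ and $B$ gives the reverse inclusion and hence~\eqref{p1.12:e1}.

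Alternatively, and perhaps more cleanly, one can prove the single identity $\operatorname{iso}_Y(A) = \operatorname{iso}(Y)$ for every dense $A \subseteq Y$; the proposition then follows at once since the right-hand side does not mention $A$. The inclusion $\operatorname{iso}(Y) \subseteq \operatorname{iso}_Y(A)$ holds trivially (an isolating ball for $p$ in $Y$ isolates $p$ in the smaller set $A$, provided $p \in A$, which again follows from density), and the reverse inclusion $\operatorname{iso}_Y(A) \subseteq \operatorname{iso}(Y)$ is exactly the key observation in the first paragraph. I expect the only genuinely delicate point to be the verification that an isolated point of a dense subset must actually lie in that subset and be isolated in the whole space --- i.e.\ ruling out the possibility that $p$ is isolated in $A$ merely because $A$ happens to be ``thin'' near $p$ while $Y$ is not; this is handled by the density approximation argument sketched above, using that a sufficiently small ball witnessing isolation in $A$ would otherwise be populated by points of $A$ approximating nearby points of $Y$.
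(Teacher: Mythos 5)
Your proof is correct. It differs from the paper's argument mainly in its organization: the paper proves the two inclusions by contradiction, taking a hypothetical \(p \in \operatorname{iso}_Y(A) \setminus \operatorname{iso}_Y(B)\), extracting (via the dichotomy \eqref{s1:e5}--\eqref{s1:e6}) a sequence of distinct points of \(B\) converging to \(p\), approximating those by points of \(A\), and contradicting \(p \in \operatorname{iso}_Y(A)\); the points of \(B\) thus serve as the intermediaries. You instead factor the statement through the whole space, establishing the identity \(\operatorname{iso}_Y(A) = \operatorname{iso}(Y)\) for an arbitrary dense subset \(A\) (your ``key observation'' that a point isolated in a dense set must be isolated in \(Y\), proved by the same density-approximation trick, now applied to nearby points of \(Y\) rather than of \(B\)), after which the proposition is immediate because the right-hand side is independent of the dense set. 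Your decomposition buys a slightly stronger, reusable fact and a cleaner endgame (once \(Y \cap \bar{B}_{r'}(p) = \{p\}\), membership \(p \in B\) and \(p \in \operatorname{iso}_Y(B)\) follow directly from Definition~\ref{d1.21}, without needing the two sequences of distinct points that the paper's proof invokes somewhat informally); the paper's route is a more compact symmetric contradiction that never needs to mention \(\operatorname{iso}(Y)\). Both hinge on the same two-step approximation idea, so the difference is one of architecture rather than of substance, and your version is complete and sound as written.
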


\begin{proof}
Equality~\eqref{p1.12:e1} holds if and only if we have
\begin{equation}\label{p1.12:e2}
\operatorname{iso}_Y(A) \subseteq \operatorname{iso}_Y(B)
\end{equation}
and
\begin{equation}\label{p1.12:e3}
\operatorname{iso}_Y(B) \subseteq \operatorname{iso}_Y(A).
\end{equation}
If~\eqref{p1.12:e2} is false, then there is a point \(p\) such that
\begin{equation}\label{p1.12:e4}
p \in \operatorname{iso}_Y(A)
\end{equation}
and
\begin{equation}\label{p1.12:e5}
p \notin \operatorname{iso}_Y(B).
\end{equation}
Since \(B\) is dense in \(Y\), \eqref{s1:e5}, \eqref{s1:e6} and \eqref{p1.12:e5} imply the equality
\begin{equation}\label{p1.12:e6}
\lim_{n \to \infty} \delta(p, b_n) = 0
\end{equation}
for some sequence \((b_n)_{n \in \mathbb{N}}\) of distinct points of \(B\). Since \(A\) is dense in \(Y\), there is a sequence \((a_n)_{n \in \mathbb{N}}\) of distinct points of \(A\) such that
\begin{equation}\label{p1.12:e7}
\lim_{n \to \infty} \delta(a_n, b_n) = 0.
\end{equation}
The triangle inequality and limit relations \eqref{p1.12:e6}, \eqref{p1.12:e7} give us
\[
\lim_{n \to \infty} \delta(p, a_n) \leqslant \lim_{n \to \infty} \delta(p, b_n) + \lim_{n \to \infty} \delta(a_n, b_n) = 0.
\]
Therefore, we have \(p \in \operatorname{acc}_Y(A)\). The last membership relation and \eqref{s1:e5} with \(S = A\) imply \(p \notin \operatorname{iso}_Y(A)\), contrary to \eqref{p1.12:e4}. Thus, inclusion~\eqref{p1.12:e2} is true. By similar reasoning, one can prove that~\eqref{p1.12:e3} is also true. Equality~\eqref{p1.12:e1} follows.
\end{proof}

To formulate the corollary of Proposition~\ref{p1.12}, we need the concept of a discrete subset of a metric space.

\begin{definition}\label{d1.23}
Let \((X, d)\) be a metric space and let \(A \subseteq X\). Then \(A\) is \emph{discrete} if every point of \(A\) is an isolated point of \(A\), i.e., if the equality
\[
A = \operatorname{iso}_X(A)
\]
holds. The metric space \((X, d)\) is discrete if the set \(X\) is discrete.
\end{definition}

\begin{corollary}\label{c1.13}
Let \((Y, \delta)\) be a metric space and let \(A\) and \(B\) be dense discrete subsets of \(Y\). Then the equality
\begin{equation}\label{c1.13:e1}
A = B
\end{equation}
holds.
\end{corollary}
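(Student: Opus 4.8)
The plan is to derive the corollary directly from Proposition~\ref{p1.12} together with Definition~\ref{d1.23}. Since $A$ and $B$ are both dense subsets of $Y$, Proposition~\ref{p1.12} gives us the equality $\operatorname{iso}_Y(A) = \operatorname{iso}_Y(B)$. Because $A$ and $B$ are also discrete, Definition~\ref{d1.23} yields $A = \operatorname{iso}_Y(A)$ and $B = \operatorname{iso}_Y(B)$. Chaining these three equalities gives $A = \operatorname{iso}_Y(A) = \operatorname{iso}_Y(B) = B$, which is exactly~\eqref{c1.13:e1}.

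I expect there to be essentially no obstacle here: once Proposition~\ref{p1.12} is in hand, the corollary is a one-line substitution. The only thing worth double-checking is that Definition~\ref{d1.23} is being applied with the ambient space correct, i.e.\ that ``$A$ is a discrete subset of $Y$'' means $A = \operatorname{iso}_Y(A)$ (isolation computed inside $Y$, equivalently inside $A$ by the definition of isolated point, since $S \cap \bar B_r(s)$ only sees points of $S$ anyway) rather than isolation in some smaller subspace; the statement of the corollary says ``dense discrete subsets of $Y$,'' so this reading is the intended one.

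Here is the proof I would write.

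\begin{proof}
Since $A$ and $B$ are dense subsets of $Y$, Proposition~\ref{p1.12} implies the equality
\[
\operatorname{iso}_Y(A) = \operatorname{iso}_Y(B).
\]
Moreover, $A$ and $B$ are discrete, so Definition~\ref{d1.23} gives us $A = \operatorname{iso}_Y(A)$ and $B = \operatorname{iso}_Y(B)$. Consequently,
\[
A = \operatorname{iso}_Y(A) = \operatorname{iso}_Y(B) = B,
\]
which is equality~\eqref{c1.13:e1}.
\end{proof}
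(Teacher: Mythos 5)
Your proof is correct and coincides with the paper's own argument: both apply Definition~\ref{d1.23} to write \(A = \operatorname{iso}_Y(A)\), \(B = \operatorname{iso}_Y(B)\), and then use Proposition~\ref{p1.12} on the dense sets \(A\) and \(B\) to chain the equalities. Your side remark about the ambient space is also sound, since \(\operatorname{iso}_Y(S)\) as defined only depends on points of \(S\) itself.
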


\begin{proof}
Since \(A\) and \(B\) are discrete subsets of \(Y\), the equalities
\begin{equation}\label{c1.13:e2}
A = \operatorname{iso}_Y(A) \text{ and } B = \operatorname{iso}_Y(B)
\end{equation}
hold by Definition~\ref{d1.23}. Since \(A\) and \(B\) are dense subsets of \(Y\), Proposition~\ref{p1.12} gives us the equality
\begin{equation}\label{c1.13:e3}
\operatorname{iso}_Y(A) = \operatorname{iso}_Y(B).
\end{equation}
Equality~\eqref{c1.13:e1} follows from~\eqref{c1.13:e2} and \eqref{c1.13:e3}.
\end{proof}

Following~\cite[p.~48]{DD2009EOD}, we will say that a metric space \((X, d)\) is \emph{metrically discrete} if there exists \(t > 0\) such that \(d(x, y) \geqslant t\) for all distinct \(x\), \(y \in X\). Moreover, a metric \(d \colon X \times X \to \mathbb{R}^{+}\) is called \emph{equidistant} if there exists \(t > 0\) such that \(d(x, y) = t\) for all distinct \(x\), \(y \in X\) (see \cite[p.~46]{DD2009EOD}).

It is clear that every equidistant metric space is metrically discrete and that every metrically discrete metric space is discrete.

Another special class of discrete metric spaces is the class of locally finite metric spaces.

\begin{definition}\label{d1.26}
A metric space \((X, d)\) is called \emph{locally finite} if every bounded \(A \subseteq X\) is finite.
\end{definition}

\begin{remark}\label{r1.27}
New interconnections between locally finite ultrametric spaces and labeled trees were recently obtained in~\cite{DK2023LFUSALT, DR2025LTGSALFU}. Some results which are closely connected with equidistant metrics and their pseudometric generalization can be found in~\cite{DL2020CCOP, BD2024OMOMPF, BD2023WAPACS, BD2023PSFMTMITGO}.
\end{remark}

We conclude this section be recalling the concepts of locally compact metric spaces and boundedly compact ones.

\begin{definition}\label{d2.22}
A metric space \((X, d)\) is called \emph{locally compact} if, for each \(x \in X\), there is \(r > 0\) such that the closed ball \(\bar{B}_r(x)\) is a compact subset of \((X, d)\).
\end{definition}

A metric space \((X, d)\) is called \emph{boundedly compact} if every closed and bounded subset of \(X\) is compact (see, for example, \cite{BDC1991AGOBCMS}).

It is clear that every locally finite metric space is boundedly compact and each boundedly compact metric space is locally compact. Moreover, the classes of the discrete metric spaces and compact ones are subclasses of the class of locally compact metric spaces.

\section{Ultrametric balls}
\label{sec3}

The present section provides a brief overview of properties of closed ultrametric balls.

\begin{proposition}\label{p2.4}
Let \((X, d)\) be an ultrametric space. Then for each ball \(\bar{B}_r(c) \in \bar{\BB}_X\) and every \(a \in \bar{B}_r(c)\) we have
\[
\bar{B}_r(c) = \bar{B}_r(a).
\]
\end{proposition}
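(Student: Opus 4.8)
The plan is to prove the two inclusions $\bar{B}_r(c) \subseteq \bar{B}_r(a)$ and $\bar{B}_r(a) \subseteq \bar{B}_r(c)$ separately, using only the strong triangle inequality together with the defining condition~\eqref{e2.4} of a closed ball.

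First I would fix an arbitrary $x \in \bar{B}_r(c)$, so that $d(c,x) \leqslant r$. Since $a \in \bar{B}_r(c)$ we also have $d(c,a) \leqslant r$. Applying the strong triangle inequality to the triple $a$, $c$, $x$ gives
\[
d(a,x) \leqslant \max\{d(a,c), d(c,x)\} \leqslant \max\{r, r\} = r,
\]
and hence $x \in \bar{B}_r(a)$. As $x \in \bar{B}_r(c)$ was arbitrary, this establishes $\bar{B}_r(c) \subseteq \bar{B}_r(a)$.

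For the reverse inclusion I would observe that the hypothesis $a \in \bar{B}_r(c)$, i.e.\ $d(c,a) \leqslant r$, is symmetric in $a$ and $c$ by the symmetry axiom of $d$, so it is equivalent to $c \in \bar{B}_r(a)$. One may therefore apply the inclusion just proved with the roles of $a$ and $c$ interchanged, obtaining $\bar{B}_r(a) \subseteq \bar{B}_r(c)$. Combining the two inclusions yields the desired equality $\bar{B}_r(c) = \bar{B}_r(a)$.

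There is essentially no obstacle here; the only points worth keeping in mind are that the argument genuinely uses the strong triangle inequality (the ordinary triangle inequality would only yield $d(a,x) \leqslant 2r$), and that the symmetry of the condition defining membership in a ball lets one deduce the second inclusion from the first without repeating the computation.
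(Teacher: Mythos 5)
Your proof is correct. The paper itself gives no argument here: it simply refers to Proposition~18.4 of Schikhof's \emph{Ultrametric Calculus}, which states exactly that every point of an ultrametric ball is a center. Your two-inclusion argument --- $d(a,x) \leqslant \max\{d(a,c),d(c,x)\} \leqslant r$ for the forward inclusion, then swapping the roles of $a$ and $c$ using the symmetry of the membership condition --- is precisely the standard proof underlying that cited result, so in effect you have supplied the details the paper delegates to the reference. Your closing remark is also apt: the strong triangle inequality is essential, since the ordinary one only gives $d(a,x)\leqslant 2r$, which is why this statement is special to ultrametric spaces. Nothing is missing.
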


\begin{proof}
It directly follows from Proposition~18.4 of~\cite{Sch1985}.
\end{proof}

\begin{lemma}\label{l2.6}
Let \(\bar{B}_r(c)\) be a closed ball in an ultrametric space \((X, d)\) and let
\[
r_1 := \diam \bar{B}_r(c).
\]
Then the equality \(\bar{B}_{r_1}(c) = \bar{B}_r(c)\) holds.
\end{lemma}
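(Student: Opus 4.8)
The plan is to prove the two inclusions $\bar{B}_r(c) \subseteq \bar{B}_{r_1}(c)$ and $\bar{B}_{r_1}(c) \subseteq \bar{B}_r(c)$ separately. For the first inclusion, I would use equality~\eqref{e2.3}: since $c \in \bar{B}_r(c)$, we have $r_1 = \diam \bar{B}_r(c) = \sup\{d(p, c) \colon p \in \bar{B}_r(c)\}$, and therefore every $p \in \bar{B}_r(c)$ satisfies $d(c, p) \leqslant r_1$, which means $p \in \bar{B}_{r_1}(c)$ by the definition~\eqref{e2.4} of a closed ball. This gives $\bar{B}_r(c) \subseteq \bar{B}_{r_1}(c)$ with essentially no work.

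For the reverse inclusion, note first that $r_1 = \sup\{d(p,c) \colon p \in \bar{B}_r(c)\} \leqslant r$, since $d(c,p) \leqslant r$ for all $p \in \bar{B}_r(c)$. Hence if $x \in \bar{B}_{r_1}(c)$, then $d(c, x) \leqslant r_1 \leqslant r$, so $x \in \bar{B}_r(c)$. This yields $\bar{B}_{r_1}(c) \subseteq \bar{B}_r(c)$, and combining the two inclusions gives $\bar{B}_{r_1}(c) = \bar{B}_r(c)$.

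I do not anticipate any real obstacle here: the statement is essentially a bookkeeping consequence of equality~\eqref{e2.3} applied with $A = \bar{B}_r(c)$ and $p_0 = c$, together with the trivial bound $r_1 \leqslant r$. The only point requiring the slightest care is that equality~\eqref{e2.3} presupposes $A$ nonempty, which holds since $c \in \bar{B}_r(c)$; and one should observe that the argument in fact never uses the strong triangle inequality beyond its role in establishing~\eqref{e2.3}, so the lemma is really a statement about how the radius of an ultrametric ball may be "tightened" to its diameter without changing the underlying set.
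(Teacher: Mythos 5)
Your proposal is correct and follows essentially the same route as the paper: both inclusions are obtained from equality~\eqref{e2.3} with \(A = \bar{B}_r(c)\), \(p_0 = c\) (giving \(r_1 \leqslant r\) and hence \(\bar{B}_{r_1}(c) \subseteq \bar{B}_r(c)\)) together with the trivial bound \(d(x,c) \leqslant \diam \bar{B}_r(c) = r_1\) for the reverse inclusion. Your closing observation is also accurate: the strong triangle inequality enters only through~\eqref{e2.3}, exactly as in the paper's argument.
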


\begin{proof}
Using the definition of closed balls and equality \eqref{e2.3} with \(A = \bar{B}_r(c)\) and \(p_0 = c\), we obtain the inequality \(r_1 \leqslant r\). Consequently, the inclusion \(\bar{B}_{r_1}(c) \subseteq \bar{B}_r(c)\) holds. To prove the converse inclusion it suffices to note that
\[
d(x, c) \leqslant \diam \bar{B}_r(c) = r_1
\]
holds for every \(x \in \bar{B}_r(c)\), that implies \(\bar{B}_r(c) \subseteq \bar{B}_{r_1}(c)\).
\end{proof}

\begin{proposition}\label{p2.7}
Let \((X, d)\) be an ultrametric space and let \(\bar{B}_1\), \(\bar{B}_2 \in \bar{\mathbf{B}}_X\). Then $\bar{B}_1 \subseteq \bar{B}_2$ holds if and only if $\bar{B}_1 \cap \bar{B}_2 \neq \varnothing$ and \(\diam \bar{B}_1 \leqslant \diam \bar{B}_2\).
\end{proposition}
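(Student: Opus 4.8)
The plan is to prove both implications directly, relying on the characteristic behavior of ultrametric balls: they form a laminar family, so two balls either are nested or are disjoint. The forward implication is essentially trivial, so the substance is in the converse.

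First I would handle the easy direction. Suppose $\bar{B}_1 \subseteq \bar{B}_2$. Since every closed ball is nonempty (it contains its center), we get $\bar{B}_1 = \bar{B}_1 \cap \bar{B}_2 \neq \varnothing$. Moreover $\diam$ is monotone with respect to inclusion, so $\diam \bar{B}_1 \leqslant \diam \bar{B}_2$. This settles the ``only if'' part with no use of the ultrametric structure beyond nonemptiness of balls.

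Now the converse. Assume $\bar{B}_1 \cap \bar{B}_2 \neq \varnothing$ and $\diam \bar{B}_1 \leqslant \diam \bar{B}_2$; I want $\bar{B}_1 \subseteq \bar{B}_2$. Pick a common point $c \in \bar{B}_1 \cap \bar{B}_2$. By Proposition~\ref{p2.4}, $c$ can be taken as a center of both balls, so write $\bar{B}_1 = \bar{B}_{r}(c)$ and $\bar{B}_2 = \bar{B}_{s}(c)$ for suitable radii $r$, $s \geqslant 0$. Then, by Lemma~\ref{l2.6}, we may further replace $r$ by $\diam \bar{B}_1$ and $s$ by $\diam \bar{B}_2$ without changing the balls, i.e.\ $\bar{B}_1 = \bar{B}_{\diam \bar{B}_1}(c)$ and $\bar{B}_2 = \bar{B}_{\diam \bar{B}_2}(c)$. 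The hypothesis $\diam \bar{B}_1 \leqslant \diam \bar{B}_2$ now gives the inclusion of balls with common center and comparable radii directly from definition~\eqref{e2.4}. This completes the proof.

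The only point requiring care — and the ``main obstacle'' such as it is — is the passage from an arbitrary common point to a common center: one must invoke Proposition~\ref{p2.4} to rewrite each ball with center $c$, and then Lemma~\ref{l2.6} to normalize both radii to the respective diameters, so that the diameter hypothesis becomes literally a radius comparison. I do not expect any genuine difficulty here; the argument is a short chain of the preceding two results.
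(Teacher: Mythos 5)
Your proof is correct and follows exactly the route the paper intends: the paper's proof of Proposition~\ref{p2.7} simply says it follows from Proposition~\ref{p2.4} and Lemma~\ref{l2.6}, and your argument (recenter both balls at a common point via Proposition~\ref{p2.4}, normalize the radii to the diameters via Lemma~\ref{l2.6}, then compare radii) is precisely that deduction, with the easy forward direction handled by monotonicity of the diameter.
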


\begin{proof}
It follows from Proposition~\ref{p2.4} and Lemma~\ref{l2.6}.
\end{proof}

Proposition~\ref{p2.7} implies the next corollary.

\begin{corollary}\label{c3.4}
Let \((X, d)\) and \((Y, \rho)\) be ultrametric spaces such that \(Y \in \bar{\BB}_X\) and \(\rho = d|_{Y \times Y}\). Then the equality
\[
\bar{\BB}_Y = \{\bar{B} \colon \bar{B} \in \bar{\BB}_X \text{ and } \bar{B} \subseteq Y\}
\]
holds.
\end{corollary}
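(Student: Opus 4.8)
The plan is to prove the set equality by establishing inclusion in both directions. Write $R := \{\bar{B} \colon \bar{B} \in \bar{\BB}_X \text{ and } \bar{B} \subseteq Y\}$ for the right-hand side; we must show $\bar{\BB}_Y = R$.

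First I would prove $R \subseteq \bar{\BB}_Y$. Let $\bar{B} \in \bar{\BB}_X$ with $\bar{B} \subseteq Y$. Write $\bar{B} = \bar{B}_r(c)$ for some $c \in X$ and $r \geqslant 0$; by Proposition~\ref{p2.4} we may assume the center is any point of $\bar{B}$, so in particular we may take $c \in \bar{B} \subseteq Y$. Then
\[
\bar{B} = \{x \in X \colon d(c,x) \leqslant r\} \cap Y = \{y \in Y \colon \rho(c,y) \leqslant r\},
\]
where the first equality uses $\bar{B} \subseteq Y$ and the second uses $\rho = d|_{Y \times Y}$ together with $c \in Y$. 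The right-hand side is exactly the closed ball of radius $r$ about $c$ in $(Y,\rho)$, so $\bar{B} \in \bar{\BB}_Y$.

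Next I would prove $\bar{\BB}_Y \subseteq R$. Let $\bar{B}^{Y}$ be a closed ball in $(Y,\rho)$ with center $c \in Y$ and radius $r \geqslant 0$, and set $\bar{B} := \bar{B}_r(c)$, the closed ball in $(X,d)$ with the same center and radius. By construction $\bar{B} \in \bar{\BB}_X$ and $\bar{B}^{Y} = \bar{B} \cap Y \subseteq \bar{B}$; it remains to show $\bar{B} \subseteq Y$, which will simultaneously give $\bar{B} = \bar{B}^{Y}$ and membership of $\bar{B}$ in $R$. Here is where Proposition~\ref{p2.7} enters. Apply it in $(X,d)$ to the two balls $\bar{B}$ and $Y$ (recall $Y \in \bar{\BB}_X$): we have $c \in \bar{B} \cap Y$, so $\bar{B} \cap Y \neq \varnothing$, and by equality~\eqref{e2.3} applied in $(X,d)$ with $p_0 = c$ together with $\bar{B}_r(c) \subseteq Y$ we get
\[
\diam \bar{B} = \sup\{d(c,x) \colon x \in \bar{B}\} \leqslant r \leqslant \diam Y,
\]
where $r \leqslant \diam Y$ holds because every $y \in \bar{B}^{Y}$ satisfies $d(c,y) \leqslant \diam Y$ and $\bar{B}^{Y}$ is a ball of radius $r$ in $Y$ — more carefully, one should first pass to $r_1 := \diam \bar{B}$ via Lemma~\ref{l2.6} so that $\bar{B} = \bar{B}_{r_1}(c)$ with $r_1 \leqslant \diam Y$, and then Proposition~\ref{p2.7} yields $\bar{B} \subseteq Y$. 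This completes the second inclusion, and the two inclusions give the claimed equality.

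The only subtle point — the main obstacle — is the bound $\diam \bar{B} \leqslant \diam Y$ needed to invoke Proposition~\ref{p2.7}: one must not conflate the radius $r$ of the ball with its diameter, and the clean way around this is to replace $r$ by $r_1 = \diam \bar{B}$ using Lemma~\ref{l2.6} (so $\bar{B} = \bar{B}_{r_1}(c)$) and observe $r_1 = \diam \bar{B}^{Y}$ since $\bar{B}$ and $\bar{B}^Y$ share the point $c$ and, after this reduction, have the same elements of $X$ at distance $\leqslant r_1$ from $c$; then $r_1 = \diam \bar{B}^Y \leqslant \diam Y$ is immediate because $\bar{B}^Y \subseteq Y$. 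Everything else is bookkeeping with the definition~\eqref{e2.4} of closed balls and the identity $\rho = d|_{Y\times Y}$.
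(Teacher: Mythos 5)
Your first inclusion (\(R \subseteq \bar{\BB}_Y\)) is fine, but the second one contains a genuine gap. For a ball \(\bar{B}^Y\) of radius \(r\) about \(c\) in \((Y,\rho)\) you set \(\bar{B} := \bar{B}_r(c)\), the \(X\)-ball \emph{with the same radius}, and try to show \(\bar{B} \subseteq Y\). That is false in general, and the inequalities you use to get it do not hold: in the Delhomm\'{e}---Laflamme---Pouzet---Sauer space of Example~\ref{ex2.6}, take \(X = \RR^+\), \(Y = \bar{B}_1(0) = [0,1]\), \(c = 0\), \(r = 5\). The ball of radius \(5\) about \(0\) in \((Y,\rho)\) is all of \(Y\), while \(\bar{B}_5(0) = [0,5] \not\subseteq Y\); here \(r = 5 > 1 = \diam Y\), \(r_1 = \diam \bar{B}_5(0) = 5 > \diam Y\), and \(\diam \bar{B}^Y = 1 \neq r_1\). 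So the claimed bounds ``\(r \leqslant \diam Y\)'' and ``\(r_1 \leqslant \diam Y\)'' and the claimed equality ``\(r_1 = \diam \bar{B}^Y\)'' all fail; moreover your justification of the last one (that \(\bar{B}\) and \(\bar{B}^Y\) ``have the same elements of \(X\) at distance \(\leqslant r_1\) from \(c\)'') presupposes \(\bar{B} \subseteq Y\), i.e.\ exactly what is to be proved, and your displayed inequality likewise invokes ``\(\bar{B}_r(c) \subseteq Y\)'' as a hypothesis. In short: the passage to \(r_1\) via Lemma~\ref{l2.6} is performed in the wrong space, on a ball that may already stick out of \(Y\).

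The repair is to shrink the radius \emph{inside \(Y\) first}. Put \(r_1 := \diam \bar{B}^Y\) (the same number in \((Y,\rho)\) and in \((X,d)\), since \(\rho = d|_{Y \times Y}\)). Lemma~\ref{l2.6}, applied in the ultrametric space \((Y,\rho)\), gives \(\bar{B}^Y = \{y \in Y \colon d(c,y) \leqslant r_1\}\). Now consider the \(X\)-ball \(\bar{B}_{r_1}(c)\): by \eqref{e2.3} its diameter is at most \(r_1\), and \(r_1 = \diam \bar{B}^Y \leqslant \diam Y\) because \(\bar{B}^Y \subseteq Y\); since \(c \in \bar{B}_{r_1}(c) \cap Y\), Proposition~\ref{p2.7} yields \(\bar{B}_{r_1}(c) \subseteq Y\), and therefore \(\bar{B}^Y = \bar{B}_{r_1}(c) \cap Y = \bar{B}_{r_1}(c) \in \bar{\BB}_X\) with \(\bar{B}_{r_1}(c) \subseteq Y\). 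With this correction your argument is exactly in the spirit of the paper, which deduces the corollary from Proposition~\ref{p2.7} (together with Proposition~\ref{p2.4} and Lemma~\ref{l2.6}); as written, however, the key step of the second inclusion is unjustified and in fact false for the ball you chose.
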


\begin{example}\label{ex2.6}
Following Delhomm\'{e}, Laflamme, Pouzet and Sauer \cite{DLPS2008TaiA}, we define an ultrametric \(d^{+} \colon \RR^{+} \times \RR^{+} \to \RR^{+}\) as
\begin{equation}\label{ex2.6:e1}
d^{+}(x, y) := \begin{cases}
0 & \text{if } x = y,\\
\max\{x, y\} & \text{if } x \neq y.
\end{cases}
\end{equation}
Let \(c > 0\) and let \(\bar{B}_r(c)\) be a closed ball in the space \((\RR^{+}, d^{+})\). Then the equality
\[
\bar{B}_r(c) = \{c\}
\]
holds if and only if \(0 \leqslant r < c\). Furthermore, we have the equality
\[
\bar{B}_r(c) = [0, r]
\]
for every \(r \geqslant c\).
\end{example}

\begin{remark}
The interesting applications of the Delhomm\'{e}---Laflamme---Pouzet---Sauer space \((\RR^+, d^+)\) are given in~\cite{Ish2023COUUUS}. Properties of endomorphisms of the groupoid \((\RR^+, d^+)\) have been described in \cite{DK2024DLPSSAG}.
\end{remark}

The next proposition is a modification of Proposition~18.5 of \cite{Sch1985}.

\begin{proposition}\label{p2.5}
Let \((X, d)\) be an ultrametric space and let \(\bar{B}_1\), \(\bar{B}_2 \in \bar{\BB}_X\) be different. If \(\bar{B}_1 \cap \bar{B}_2 \neq \varnothing\) holds, then we have either \(\bar{B}_1 \subset \bar{B}_2\) or \(\bar{B}_2 \subset \bar{B}_1\). If \(\bar{B}_1\) and \(\bar{B}_2\) are disjoint, then the equality
\begin{equation}\label{p2.5:e1}
d(x_1, x_2) = \diam (\bar{B}_1 \cup \bar{B}_2)
\end{equation}
holds for every \(x_1 \in \bar{B}_1\) and every \(x_2 \in \bar{B}_2\).
\end{proposition}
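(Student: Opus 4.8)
The plan is to derive everything from the three structural facts already in hand: Proposition~\ref{p2.4} (re-centering a ball at any of its points), Lemma~\ref{l2.6} (a ball equals the ball of the same center with radius its diameter), and the combination of these packaged as Proposition~\ref{p2.7}. The key observation is that the "either/or" dichotomy is really a statement about comparability of diameters, so I would first split on $\diam\bar B_1$ versus $\diam\bar B_2$.

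First I would treat the case $\bar B_1\cap\bar B_2\neq\varnothing$. Without loss of generality assume $\diam\bar B_1\leqslant\diam\bar B_2$; then Proposition~\ref{p2.7} immediately gives $\bar B_1\subseteq\bar B_2$, and since the balls are assumed different this inclusion is strict, i.e. $\bar B_1\subset\bar B_2$. If instead $\diam\bar B_2\leqslant\diam\bar B_1$, the symmetric application of Proposition~\ref{p2.7} gives $\bar B_2\subset\bar B_1$. One of the two comparisons always holds since diameters are real numbers, so this disposes of the first assertion.

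Next I would handle the disjoint case. Fix $x_1\in\bar B_1$ and $x_2\in\bar B_2$. By Proposition~\ref{p2.4} I may re-center: $\bar B_1=\bar B_{r_1}(x_1)$ and $\bar B_2=\bar B_{r_2}(x_2)$ for suitable radii, and by Lemma~\ref{l2.6} I may moreover take $r_1=\diam\bar B_1$ and $r_2=\diam\bar B_2$. Now for any $p_1\in\bar B_1$ and $p_2\in\bar B_2$ the strong triangle inequality gives $d(p_1,p_2)\leqslant\max\{d(p_1,x_1),d(x_1,x_2),d(x_2,p_2)\}$. The terms $d(p_1,x_1)$ and $d(x_2,p_2)$ are at most $\diam\bar B_1$ and $\diam\bar B_2$ respectively, so if I can show $d(x_1,x_2)$ dominates both of these, I get $d(p_1,p_2)\leqslant d(x_1,x_2)$ for all such $p_1,p_2$, and by \eqref{e2.3} (applied with the base point $x_1$, say, after noting $d(x_1,x_2)$ is itself realized) this yields $\diam(\bar B_1\cup\bar B_2)\leqslant d(x_1,x_2)$; the reverse inequality $d(x_1,x_2)\leqslant\diam(\bar B_1\cup\bar B_2)$ is trivial since $x_1,x_2\in\bar B_1\cup\bar B_2$. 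Combining with the freedom in choosing $x_1,x_2$ then forces $d(x_1,x_2)$ to be constant across all choices, which is exactly \eqref{p2.5:e1}.

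The one point that needs genuine care — the main obstacle — is establishing $d(x_1,x_2)>\diam\bar B_1$ and $d(x_1,x_2)>\diam\bar B_2$ in the disjoint case. This is where disjointness is used: if, say, $d(x_1,x_2)\leqslant\diam\bar B_1=r_1$, then $x_2\in\bar B_{r_1}(x_1)=\bar B_1$, contradicting $\bar B_1\cap\bar B_2=\varnothing$; symmetrically for $\bar B_2$. With these strict inequalities in place, the $\max$ in the strong triangle inequality collapses to $d(x_1,x_2)$, and the argument of the previous paragraph goes through; in particular it shows $d(p_1,x_1)\leqslant d(x_1,x_2)$ and $d(x_2,p_2)\leqslant d(x_1,x_2)$ are automatic, so every pairwise distance within $\bar B_1\cup\bar B_2$ — whether within one ball or across — is $\leqslant d(x_1,x_2)$, giving $\diam(\bar B_1\cup\bar B_2)=d(x_1,x_2)$ and hence the claimed independence of the chosen points.
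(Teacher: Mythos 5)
Your proof is correct, and it follows a genuinely different route from the paper's. The paper imports the hard work from Schikhof's Proposition~18.5: it cites that result both for the dichotomy ($\bar{B}_1 \subset \bar{B}_2$ or $\bar{B}_2 \subset \bar{B}_1$ when the balls meet) and for the fact that the cross-distance $d(x_1, x_2)$ is the same for all choices of $x_1 \in \bar{B}_1$, $x_2 \in \bar{B}_2$ when the balls are disjoint; it then introduces $\Delta(\bar{B}_1, \bar{B}_2) = \sup\{d(x_1,x_2) \colon x_1 \in \bar{B}_1,\ x_2 \in \bar{B}_2\}$, writes $\diam(\bar{B}_1 \cup \bar{B}_2) = \max\{\diam \bar{B}_1, \diam \bar{B}_2, \Delta\}$, and proves $\diam \bar{B}_i \leqslant \Delta$ by contradiction, re-centering $\bar{B}_1$ at a point realizing a large internal distance and deducing $\bar{B}_2 \subseteq \bar{B}_1$ against disjointness. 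You instead derive the dichotomy internally from Proposition~\ref{p2.7} (whose proof rests only on Proposition~\ref{p2.4} and Lemma~\ref{l2.6}, so there is no circularity), and in the disjoint case you prove the strict inequalities $d(x_1,x_2) > \diam \bar{B}_1$ and $d(x_1,x_2) > \diam \bar{B}_2$ directly — if $d(x_1,x_2) \leqslant \diam \bar{B}_1$ then $x_2 \in \bar{B}_{\diam \bar{B}_1}(x_1) = \bar{B}_1$ by re-centering and Lemma~\ref{l2.6}, contradicting disjointness — after which the strong triangle inequality bounds every distance in $\bar{B}_1 \cup \bar{B}_2$ by $d(x_1,x_2)$, and the trivial reverse inequality finishes. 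What your approach buys is self-containedness: you never need Schikhof's constancy of cross-distances, since it falls out as a byproduct (each cross-distance equals $\diam(\bar{B}_1 \cup \bar{B}_2)$), and the contradiction step is replaced by a direct containment argument. The paper's version is shorter on the page precisely because it delegates two nontrivial facts to the external reference.
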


\begin{proof}
By Proposition~18.5 of \cite{Sch1985}, we have
\[
(\bar{B}_1 \subset \bar{B}_2 \text{ or } \bar{B}_2 \subset \bar{B}_1) \Leftrightarrow (\bar{B}_1 \cap \bar{B}_2 \neq \varnothing)
\]
and, moreover, the same proposition claims the equality
\begin{equation}\label{p2.5:e5}
d(x_1, x_2) = d(y_1, y_2)
\end{equation}
for all \(x_1\), \(y_1 \in \bar{B}_1\) and all \(x_2\), \(y_2 \in \bar{B}_2\) whenever \(\bar{B}_1 \cap \bar{B}_2 = \varnothing\).

Write
\begin{equation}\label{p2.5:e6}
\Delta(\bar{B}_1, \bar{B}_2) := \sup\{d(x_1, x_2) \colon x_1 \in \bar{B}_1 \text{ and } x_2 \in \bar{B}_2\}.
\end{equation}
It follows directly from~\eqref{p2.5:e6} that
\begin{equation}\label{p2.5:e2}
\diam (\bar{B}_1 \cup \bar{B}_2) = \max \bigl\{\diam \bar{B}_1, \diam \bar{B}_2, \Delta(\bar{B}_1, \bar{B}_2)\bigr\}.
\end{equation}
Now, using \eqref{p2.5:e5} and \eqref{p2.5:e2} we see that \eqref{p2.5:e1} holds if
\begin{equation}\label{p2.5:e3}
\diam \bar{B}_i \leqslant \Delta(\bar{B}_1, \bar{B}_2), \quad i = 1, 2.
\end{equation}
Without loss of generality, we may assume \(\diam \bar{B}_1 \geqslant \diam \bar{B}_2\). Now, if \eqref{p2.5:e3} does not hold, then there are \(z_1\), \(y_1 \in \bar{B}_1\) such that
\begin{equation}\label{p2.5:e4}
d(z_1, y_1) > \Delta(\bar{B}_1, \bar{B}_2).
\end{equation}
Let \(r_1\) be the radius of \(\bar{B}_1\). Using Proposition~\ref{p2.4} we can consider \(z_1\) as a center of the ball \(\bar{B}_1\). By inequality~\eqref{p2.5:e4},
\[
r_1 \geqslant d(z_1, y_1) > d(z_1, x_2)
\]
holds for every \(x_2 \in \bar{B}_2\). Thus, \(\bar{B}_2 \subseteq \bar{B}_1\), contrary to \(\bar{B}_1 \cap \bar{B}_2 = \varnothing\).
\end{proof}

\begin{remark}\label{r2.13}
A partial generalization of Propositions~\ref{p2.4}, \ref{p2.7} and \ref{p2.5}, for ``balls'' whose points are finite, nonempty subsets of \(X\), is given in Proposition~2.1~\cite{DDP2011pNUAA}.
\end{remark}

Following~\cite{Dov2019pNUAA} we introduce now the concept of smallest ball containing a given bounded subset of ultrametric space.

\begin{definition}\label{d2.9}
Let \((X, d)\) be a metric space and let \(A\) be a nonempty, bounded subset of \(X\). A ball \(B^* = B^*(A) \in \bar{\mathbf{B}}_X\) is the \emph{smallest ball} containing \(A\) if we have \(B^* \supseteq A\) and the implication
\begin{equation}\label{d2.9:e1}
(\bar{B} \supseteq A) \Rightarrow (\bar{B} \supseteq B^*)
\end{equation}
is valid for every \(\bar{B} \in \bar{\mathbf{B}}_X\).
\end{definition}

It is clear that, for every nonempty, bounded \(A \subseteq X\), the smallest ball \(B^*(A)\) is unique if it exists.

The next proposition follows from Proposition~\ref{p2.7} (see~\cite{Dov2019pNUAA}, Proposition~1.8, for details).

\begin{proposition}\label{p2.12}
Let \((X, d)\) be an ultrametric space, let \(A\) be a bounded subset of \(X\), \(r := \diam A\), and let \(c \in A\). Then the closed ball \(\bar{B}_r(c)\) is the smallest ball containing \(A\).
\end{proposition}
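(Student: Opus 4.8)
The plan is to verify directly the two defining properties of the smallest ball from Definition~\ref{d2.9}, namely $\bar{B}_r(c) \supseteq A$ and the implication~\eqref{d2.9:e1}, using Proposition~\ref{p2.7} as the principal tool. (Uniqueness is not part of what has to be proved here, since the text already records that $B^*(A)$ is unique whenever it exists; and $\bar{B}_r(c) \in \bar{\BB}_X$ because $A$ is nonempty, so $r = \diam A$ is a well-defined nonnegative number.)

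First I would check the containment $A \subseteq \bar{B}_r(c)$. Since $c \in A$, equality~\eqref{e2.3} applied with $p_0 = c$ gives $d(a, c) \leqslant \diam A = r$ for every $a \in A$, and hence $a \in \bar{B}_r(c)$ by the definition~\eqref{e2.4} of a closed ball. The key auxiliary fact, which I would isolate next, is that $\diam \bar{B}_r(c) = r$. Indeed, the strong triangle inequality yields $d(x, y) \leqslant \max\{d(x, c), d(c, y)\} \leqslant r$ for all $x, y \in \bar{B}_r(c)$, so $\diam \bar{B}_r(c) \leqslant r$; conversely, from $A \subseteq \bar{B}_r(c)$ we get $r = \diam A \leqslant \diam \bar{B}_r(c)$. (This is essentially the content of Lemma~\ref{l2.6}.)

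It then remains to establish~\eqref{d2.9:e1}. Let $\bar{B} \in \bar{\BB}_X$ satisfy $\bar{B} \supseteq A$. Because $A$ contains $c$, the point $c$ lies in both $\bar{B}$ and $\bar{B}_r(c)$, so $\bar{B} \cap \bar{B}_r(c) \neq \varnothing$; moreover $\diam \bar{B} \geqslant \diam A = r = \diam \bar{B}_r(c)$. Proposition~\ref{p2.7} now gives $\bar{B}_r(c) \subseteq \bar{B}$, which is exactly the required implication. I do not expect a real obstacle in this argument; the only points needing a little care are the computation $\diam \bar{B}_r(c) = r$ and the observation that $\bar{B}_r(c)$ and any ball $\bar{B} \supseteq A$ automatically share the point $c$, which is what makes Proposition~\ref{p2.7} applicable.
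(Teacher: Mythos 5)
Your argument is correct and follows the paper's route: the paper proves this proposition precisely by appealing to Proposition~\ref{p2.7} (with details delegated to an external reference), and your verification of \(A \subseteq \bar{B}_r(c)\) via~\eqref{e2.3}, the computation \(\diam \bar{B}_r(c) = r\), and the application of Proposition~\ref{p2.7} with the common point \(c\) is exactly the intended filling-in of those details.
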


\section{Dense discrete subsets of ultrametric balleans}
\label{sec4}

Let \(X\) be a metric space. In what follows, we will use the notation \(\mathfrak{M}_X\) for the set of all nonempty closed bounded subsets of \(X\). If \(d \colon X \times X \to \RR^{+}\) is a metric on \(X\), then the \emph{Hausdorff distance} \(d_H(A, B)\) between \(A\), \(B \in \mathfrak{M}_X\) is defined by
\begin{equation}\label{e6.11}
d_H(A, B) := \max\left\{\sup_{a \in A} \inf_{b \in B} d(a, b), \sup_{b \in B} \inf_{a \in A} d(a, b)\right\}.
\end{equation}

It is well-known that the Hausdorff distance \(d_H\) is a metric on \(\mathfrak{M}_X\) for arbitrary metric space \((X, d)\) (see, for example, Theorem~7.3.1 of~\cite{Sea2007}).

\begin{proposition}\label{p3.2}
Let \((X, d)\) be a metric space. Then \(\bar{\BB}_X\) is a nonempty subset of \(\mathfrak{M}_X\).
\end{proposition}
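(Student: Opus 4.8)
The plan is to establish the two assertions of Proposition~\ref{p3.2} separately: first that \(\bar{\BB}_X \neq \varnothing\), and second that every closed ball is a nonempty, closed, bounded subset of \(X\), hence a member of \(\mathfrak{M}_X\).

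First I would observe that \(\bar{\BB}_X \neq \varnothing\) because \(X\) is nonempty (as a metric space is defined on a nonempty set): picking any \(c \in X\), the closed ball \(\bar{B}_0(c) = \{c\}\) belongs to \(\bar{\BB}_X\) by \eqref{e2.4}. Next, take an arbitrary closed ball \(\bar{B} = \bar{B}_r(c) \in \bar{\BB}_X\). It is nonempty since \(c \in \bar{B}_r(c)\). It is bounded since \(d(x, y) \leqslant d(x, c) + d(c, y) \leqslant 2r\) for all \(x\), \(y \in \bar{B}_r(c)\), so \(\diam \bar{B}_r(c) \leqslant 2r < \infty\). The only point requiring a small argument is that \(\bar{B}_r(c)\) is closed in \((X, d)\): if \(x \notin \bar{B}_r(c)\), then \(d(c, x) > r\), and setting \(\varepsilon := d(c, x) - r > 0\), the triangle inequality gives \(d(c, y) \geqslant d(c, x) - d(x, y) > d(c, x) - \varepsilon = r\) for every \(y\) with \(d(x, y) < \varepsilon\); hence the open ball of radius \(\varepsilon\) around \(x\) misses \(\bar{B}_r(c)\), so the complement of \(\bar{B}_r(c)\) is open. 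Therefore \(\bar{B}_r(c) \in \mathfrak{M}_X\), and since \(\bar{B}\) was arbitrary we conclude \(\bar{\BB}_X \subseteq \mathfrak{M}_X\).

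I do not anticipate any genuine obstacle here; this is a routine verification. The only mild subtlety is remembering to invoke the standing convention that the underlying set of a metric space is nonempty in order to exhibit an element of \(\bar{\BB}_X\), and to use the ordinary triangle inequality rather than the strong triangle inequality, since the statement is phrased for general metric spaces. One could alternatively note that closedness of balls in an ultrametric space also follows from Proposition~\ref{p2.4}, but the direct argument above works in full generality and is shorter.
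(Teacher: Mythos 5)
Your proposal is correct and follows the same line as the paper's proof, which simply asserts that each closed ball is a nonempty closed bounded subset of \(X\) and uses the nonemptiness of \(X\) to produce an element of \(\bar{\BB}_X\); you merely spell out the routine verifications (center membership, \(\diam \bar{B}_r(c) \leqslant 2r\), and the open-complement argument for closedness) that the paper leaves implicit. No gaps.
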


\begin{proof}
Each \(\bar{B} \in \bar{\BB}_X\) is a closed bounded subset of \(X\). Since the set \(X\) is nonempty by definition of metric space, the set \(\bar{\BB}_X\) is also nonempty by definition of the closed balls. Hence, \(\bar{\BB}_X\) is a nonempty subset of \(\mathfrak{M}_X\), as required.
\end{proof}

The following lemma is a part of Lemma~2.1 from~\cite{Qiu2014pNUAA}.

\begin{lemma}\label{l3.3}
Let \((X, d)\) be an ultrametric space and let \(\bar{B}_1\), \(\bar{B}_2 \in \bar{\BB}_X^0\) be different. Then the equality
\begin{equation}\label{l3.3:e1}
d_H(\bar{B}_1, \bar{B}_2) = \begin{cases}
\dist(\bar{B}_1, \bar{B}_2) & \text{if \(\bar{B}_1 \cap \bar{B}_2 = \varnothing\)},\\
\max\{\diam \bar{B}_1, \diam \bar{B}_2\} & \text{if \(\bar{B}_1 \cap \bar{B}_2 \neq \varnothing\)}
\end{cases}
\end{equation}
holds, where
\[
\dist(\bar{B}_1, \bar{B}_2) = \inf\{d(x, y) \colon x \in \bar{B}_1, y \in \bar{B}_2\}.
\]
\end{lemma}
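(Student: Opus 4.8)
The plan is to split into the two cases according to whether $\bar{B}_1 \cap \bar{B}_2 = \varnothing$ or not, and in each case to compute the two one-sided quantities $\sup_{a \in \bar{B}_1} \inf_{b \in \bar{B}_2} d(a,b)$ and $\sup_{b \in \bar{B}_2} \inf_{a \in \bar{B}_1} d(a,b)$ appearing in \eqref{e6.11}, using the structure theorem for ultrametric balls (Proposition~\ref{p2.5}) and the equality \eqref{e2.3} for diameters.

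\textbf{Case $\bar{B}_1 \cap \bar{B}_2 = \varnothing$.} Here Proposition~\ref{p2.5} tells us that $d(x_1, x_2) = \diam(\bar{B}_1 \cup \bar{B}_2)$ for \emph{every} $x_1 \in \bar{B}_1$ and $x_2 \in \bar{B}_2$; in particular the quantity $d(x_1,x_2)$ does not depend on the choice of $x_1, x_2$. Call this common value $t$. Then for any fixed $a \in \bar{B}_1$ we have $\inf_{b \in \bar{B}_2} d(a,b) = t$, hence $\sup_{a \in \bar{B}_1} \inf_{b \in \bar{B}_2} d(a,b) = t$, and symmetrically the other one-sided quantity also equals $t$. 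So $d_H(\bar{B}_1, \bar{B}_2) = t = \dist(\bar{B}_1, \bar{B}_2)$, since the infimum defining $\dist$ is over the same constant value.

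\textbf{Case $\bar{B}_1 \cap \bar{B}_2 \neq \varnothing$.} By Proposition~\ref{p2.5}, since the balls are different, one is strictly contained in the other; without loss of generality $\bar{B}_2 \subset \bar{B}_1$, so $\diam \bar{B}_2 \leqslant \diam \bar{B}_1$ and $\max\{\diam\bar{B}_1,\diam\bar{B}_2\} = \diam\bar{B}_1$. Since $\bar{B}_2 \subseteq \bar{B}_1$, every $b \in \bar{B}_2$ lies in $\bar{B}_1$, so $\inf_{a \in \bar{B}_1} d(a,b) = 0$ and the second one-sided term vanishes. For the first term, fix $a \in \bar{B}_1$; I claim $\inf_{b \in \bar{B}_2} d(a,b) = d(a,c_2)$ where $c_2$ is a center of $\bar{B}_2$ (equivalently, any point of $\bar{B}_2$, by Proposition~\ref{p2.4}): indeed, if $a \in \bar{B}_2$ this is $0$, and if $a \notin \bar{B}_2$ then for every $b \in \bar{B}_2$ the strong triangle inequality with $d(a,b) > \diam\bar{B}_2 \geqslant d(c_2, b)$ forces $d(a, c_2) = \max\{d(a,b), d(c_2,b)\} = d(a,b)$, so again the infimum equals $d(a,c_2)$. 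Therefore $\sup_{a \in \bar{B}_1} \inf_{b \in \bar{B}_2} d(a,b) = \sup_{a \in \bar{B}_1} d(a, c_2)$, and since $c_2 \in \bar{B}_1$, equality \eqref{e2.3} with $A = \bar{B}_1$ and $p_0 = c_2$ gives this supremum as $\diam \bar{B}_1$. Hence $d_H(\bar{B}_1, \bar{B}_2) = \diam\bar{B}_1 = \max\{\diam\bar{B}_1, \diam\bar{B}_2\}$.

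The main thing to be careful about is the nesting case: one must correctly identify that the one-sided Hausdorff distance \emph{from the smaller ball to the larger} is zero while the one from the larger to the smaller realizes the diameter of the larger, and the key algebraic point is the isosceles-triangle phenomenon ($d(a,b)$ being independent of $b \in \bar{B}_2$ once $a \notin \bar{B}_2$), which reduces the infimum over $\bar{B}_2$ to a distance to a single point. Everything else is a direct application of Propositions~\ref{p2.4} and \ref{p2.5} together with \eqref{e2.3}; no genuine obstacle is anticipated.
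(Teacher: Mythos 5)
The paper never proves Lemma~\ref{l3.3}: it is imported as part of Lemma~2.1 of \cite{Qiu2014pNUAA}. Your direct computation of the two one-sided suprema in \eqref{e6.11} --- using the constancy of distances between disjoint balls from Proposition~\ref{p2.5} in the first case, and the nesting alternative together with \eqref{e2.3} in the second --- is therefore a legitimate self-contained substitute, and it relies only on material preceding the lemma (Propositions~\ref{p2.4}, \ref{p2.5}, Lemma~\ref{l2.6}), so there is no circularity with Lemma~\ref{l6.12}, which in the paper is deduced \emph{from} Lemma~\ref{l3.3}. The disjoint case is complete as written, and your argument in fact never uses the hypothesis \(\bar{B}_1, \bar{B}_2 \in \bar{\BB}_X^0\), so it proves the formula for all distinct closed balls.

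One step in the nested case is asserted more strongly than it is true. With \(\bar{B}_2 \subset \bar{B}_1\) and a \emph{fixed} \(c_2 \in \bar{B}_2\), the identity \(\inf_{b \in \bar{B}_2} d(a,b) = d(a,c_2)\) holds for \(a \in \bar{B}_1 \setminus \bar{B}_2\) but fails for \(a \in \bar{B}_2 \setminus \{c_2\}\) (there the infimum is \(0\) while \(d(a,c_2)\) may be positive), so the line ``\(\sup_{a \in \bar{B}_1}\inf_{b \in \bar{B}_2} d(a,b) = \sup_{a \in \bar{B}_1} d(a,c_2)\)'' is not literally justified by what precedes it. The conclusion is nevertheless correct and the repair is short: you always have \(\inf_{b \in \bar{B}_2} d(a,b) \leqslant d(a,c_2)\), which gives the upper bound \(\diam \bar{B}_1\); conversely, pick \(a_0 \in \bar{B}_1 \setminus \bar{B}_2\) (possible since the inclusion is strict) and note \(d(a_0,c_2) > \diam\bar{B}_2 \geqslant d(a,c_2)\) for every \(a \in \bar{B}_2\), so \(\sup_{a \in \bar{B}_1} d(a,c_2)\) equals the supremum over \(\bar{B}_1 \setminus \bar{B}_2\), where your pointwise identity does apply; hence \(\sup_{a \in \bar{B}_1}\inf_{b \in \bar{B}_2} d(a,b) = \diam\bar{B}_1\) by \eqref{e2.3}. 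The same observation closes the small unproved claim that \(d(a,b) > \diam\bar{B}_2\) whenever \(a \notin \bar{B}_2\) and \(b \in \bar{B}_2\): by Lemma~\ref{l2.6} and Proposition~\ref{p2.4}, \(\bar{B}_2 = \bar{B}_{\diam\bar{B}_2}(b)\), so \(d(a,b) \leqslant \diam\bar{B}_2\) would force \(a \in \bar{B}_2\).
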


Equality~\eqref{l3.3:e1} admits the following modification, which was obtained earlier in \cite{Dov2019pNUAA} for finite ultrametric spaces.

\begin{lemma}\label{l6.12}
Let \((X, d)\) be an ultrametric space. Then the equality
\begin{equation}\label{l6.12:e1}
d_H(\bar{B}_1, \bar{B}_2) = \diam(\bar{B}_1 \cup \bar{B}_2)
\end{equation}
holds for all distinct \(\bar{B}_1\), \(\bar{B}_2 \in \bar{\mathbf{B}}_X\).
\end{lemma}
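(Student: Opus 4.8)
The plan is to split the proof into two cases governed by Proposition~\ref{p2.5}: either $\bar B_1\cap\bar B_2=\varnothing$, or $\bar B_1\cap\bar B_2\neq\varnothing$ and then (since $\bar B_1\neq\bar B_2$) one ball is strictly contained in the other, say $\bar B_1\subsetneq\bar B_2$. Recall that $\bar B_1,\bar B_2\in\mathfrak{M}_X$ by Proposition~\ref{p3.2}, so $d_H(\bar B_1,\bar B_2)$ is well defined by~\eqref{e6.11}.

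In the disjoint case I would simply substitute equality~\eqref{p2.5:e1} into~\eqref{e6.11}. Indeed, Proposition~\ref{p2.5} gives $d(x_1,x_2)=\diam(\bar B_1\cup\bar B_2)$ for \emph{every} $x_1\in\bar B_1$ and $x_2\in\bar B_2$, so $\inf_{b\in\bar B_2}d(a,b)=\diam(\bar B_1\cup\bar B_2)$ for each $a\in\bar B_1$, whence $\sup_{a\in\bar B_1}\inf_{b\in\bar B_2}d(a,b)=\diam(\bar B_1\cup\bar B_2)$; the symmetric supremum is evaluated in the same way, and~\eqref{l6.12:e1} follows.

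In the nested case $\bar B_1\subsetneq\bar B_2$ we have $\bar B_1\cup\bar B_2=\bar B_2$, so it suffices to prove $d_H(\bar B_1,\bar B_2)=\diam\bar B_2$. Since $\bar B_1\subseteq\bar B_2$, for every $a\in\bar B_1$ one has $\inf_{b\in\bar B_2}d(a,b)=0$, so the first inner supremum in~\eqref{e6.11} vanishes and $d_H(\bar B_1,\bar B_2)=\sup_{b\in\bar B_2}\inf_{a\in\bar B_1}d(a,b)$. If $\bar B_1=\{c\}$ is a one-point ball, this supremum equals $\sup_{b\in\bar B_2}d(c,b)$, which is $\diam\bar B_2$ by~\eqref{e2.3} applied with $A=\bar B_2$ and $p_0=c$ (legitimate since $c\in\bar B_2$). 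If $\card\bar B_1\geqslant 2$, then $\diam\bar B_1>0$ and $\diam\bar B_2\geqslant\diam\bar B_1>0$, so Lemma~\ref{l2.6} shows $\bar B_1,\bar B_2\in\bar{\BB}_X^{0}$, and Lemma~\ref{l3.3} yields $d_H(\bar B_1,\bar B_2)=\max\{\diam\bar B_1,\diam\bar B_2\}=\diam\bar B_2$, again using $\diam\bar B_1\leqslant\diam\bar B_2$. (Alternatively, the subcase $\card\bar B_1\geqslant 2$ can be handled without Lemma~\ref{l3.3} by the ultrametric ``isosceles'' property: fixing $c\in\bar B_1$, for any $b\in\bar B_2$ with $d(c,b)>\diam\bar B_1$ the strong triangle inequality forces $d(a,b)=d(c,b)$ for all $a\in\bar B_1$, hence $\sup_{b\in\bar B_2}\inf_{a\in\bar B_1}d(a,b)=\sup_{b\in\bar B_2}d(c,b)=\diam\bar B_2$.)

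The point that needs genuine attention is the case distinction inside the nested case: Lemma~\ref{l3.3} is stated only for balls in $\bar{\BB}_X^{0}$, so the situation in which $\bar B_1$ is a singleton whose center is not an isolated point of $X$ (and thus $\bar B_1\notin\bar{\BB}_X^{0}$) is not covered by it and must be treated separately — this is exactly the one-point subcase above. Apart from that, the argument is a routine manipulation of suprema and infima combined with Lemma~\ref{l2.6} and Propositions~\ref{p2.4}, \ref{p2.7} and~\ref{p2.5}.
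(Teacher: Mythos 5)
Your proof is correct and follows essentially the same route as the paper's: the disjoint case via equality~\eqref{p2.5:e1}, and the nested case split into a singleton $\bar B_1$ (direct computation with~\eqref{e2.3}) versus $\diam\bar B_1>0$ (Lemma~\ref{l3.3} plus $\bar B_1\subset\bar B_2$). Your explicit check that $\diam\bar B_1>0$ forces $\bar B_1,\bar B_2\in\bar{\BB}_X^{0}$, so that Lemma~\ref{l3.3} really applies, is a detail the paper leaves implicit, and your parenthetical ``isosceles'' alternative is a fine, though unnecessary, bonus.
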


\begin{proof}
Let \(\bar{B}_1\), \(\bar{B}_2 \in \bar{\mathbf{B}}_X\) be distinct.

If \(\bar{B}_1 \cap \bar{B}_2 = \varnothing\) holds, then equality \eqref{l6.12:e1} follows from \eqref{p2.5:e1} and \eqref{e6.11}.

Let us consider the case \(\bar{B}_1 \cap \bar{B}_2 \neq \varnothing\). Then, by Proposition~\ref{p2.5}, we have either \(\bar{B}_1 \subset \bar{B}_2\) or \(\bar{B}_2 \subset \bar{B}_1\). Without loss of generality we assume
\begin{equation}\label{l6.12:e2}
\bar{B}_1 \subset \bar{B}_2.
\end{equation}
If \(|\bar{B}_1| = 1\), then there is \(p_1 \in X\) such that
\begin{equation}\label{l6.12:e3}
\bar{B}_1  = \{p_1\}.
\end{equation}
Inclusion~\eqref{l6.12:e2} implies that \(p_1 \in \bar{B}_2\). Using \eqref{e2.3}, \eqref{e6.11} and \eqref{l6.12:e3} we obtain
\begin{align*}
d_H(\bar{B}_1, \bar{B}_2) & = \max\left\{\inf_{p_2 \in \bar{B}_2} d(p_1, p_2), \sup_{p_2 \in \bar{B}_2} d(p_1, p_2)\right\} \\
& = \sup\{d(p_1, p_2) \colon p_2 \in \bar{B}_2\} = \diam \bar{B}_2.
\end{align*}
From \eqref{l6.12:e2} it follows that \(\diam \bar{B}_2 = \diam (\bar{B}_1 \cup \bar{B}_2)\). Thus, \eqref{l6.12:e1} holds.

To complete the proof we must consider the case when
\begin{equation}\label{l6.12:e4}
\bar{B}_1 \subset \bar{B}_2
\end{equation}
and \(\diam \bar{B}_1 > 0\). If \eqref{l6.12:e4} is true, then, in accordance with Lemma~\ref{l3.3},
\begin{equation}\label{l6.12:e5}
d_H(\bar{B}_1, \bar{B}_2) = \max\{\diam \bar{B}_1, \diam \bar{B}_2\}
\end{equation}
holds. Inclusion~\eqref{l6.12:e4} implies the equalities
\begin{equation}\label{l6.12:e6}
\bar{B}_1 \cup \bar{B}_2 = \bar{B}_2
\end{equation}
and
\begin{equation}\label{l6.12:e7}
\max\{\diam \bar{B}_1, \diam \bar{B}_2\} = \diam \bar{B}_2.
\end{equation}
Now~\eqref{l6.12:e1} follows from \eqref{l6.12:e5}, \eqref{l6.12:e6} and \eqref{l6.12:e7}.
\end{proof}

Proposition~\ref{p2.12} and Lemma~\ref{l6.12} give us the following geometric interpretation of the Hausdorff distance between closed ultrametric balls.

\begin{proposition}\label{c3.5}
Let \((X, d)\) be an ultrametric space and let \(\bar{B}_1\), \(\bar{B}_2 \in \bar{\BB}_X\) be distinct. Then the equality
\begin{equation}\label{c3.5:e1}
d_H(\bar{B}_1, \bar{B}_2) = \diam B^*
\end{equation}
holds, where \(B^* = B^*(\bar{B}_1 \cup \bar{B}_2)\) is the smallest closed ball containing the union \(\bar{B}_1 \cup \bar{B}_2\).
\end{proposition}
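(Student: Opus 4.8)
The plan is to combine the two results established immediately before the statement: Lemma~\ref{l6.12}, which identifies \(d_H(\bar{B}_1, \bar{B}_2)\) with \(\diam(\bar{B}_1 \cup \bar{B}_2)\), and Proposition~\ref{p2.12}, which describes the smallest ball containing a bounded set. In this way the whole argument reduces to the single claim that \(\diam B^* = \diam(\bar{B}_1 \cup \bar{B}_2)\).

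First I would check that \(A := \bar{B}_1 \cup \bar{B}_2\) is a nonempty bounded subset of \(X\), so that Proposition~\ref{p2.12} applies. Nonemptiness is immediate since each closed ball is nonempty. Boundedness follows because each \(\bar{B}_i\) is bounded and the union of two bounded sets is bounded: fixing \(a_i \in \bar{B}_i\) and using the (strong) triangle inequality, every pair of points of \(A\) lies at distance at most \(\max\{\diam \bar{B}_1, \diam \bar{B}_2, d(a_1, a_2)\} < \infty\). Alternatively, one may simply invoke equality~\eqref{p2.5:e2} from the proof of Proposition~\ref{p2.5}.

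Next, set \(r := \diam A\) and pick any \(c \in A\). By Proposition~\ref{p2.12}, the smallest ball containing \(A\) is \(B^* = \bar{B}_r(c)\), so it remains to verify \(\diam B^* = r\). On the one hand, \(A \subseteq B^*\) gives \(r = \diam A \leqslant \diam B^*\). On the other hand, applying equality~\eqref{e2.3} to the ball \(\bar{B}_r(c)\) with \(p_0 = c\) yields \(\diam B^* = \sup\{d(x, c) \colon x \in \bar{B}_r(c)\} \leqslant r\). Hence \(\diam B^* = r = \diam A = \diam(\bar{B}_1 \cup \bar{B}_2)\).

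Finally, combining this with Lemma~\ref{l6.12}, that is, with equality~\eqref{l6.12:e1}, we obtain \(d_H(\bar{B}_1, \bar{B}_2) = \diam(\bar{B}_1 \cup \bar{B}_2) = \diam B^*\), which is exactly~\eqref{c3.5:e1}. I do not expect a genuine obstacle here; the only point requiring a moment's care is the observation that, in an ultrametric space, replacing \(A\) by its smallest enclosing ball does not increase the diameter — which is precisely what equality~\eqref{e2.3} provides.
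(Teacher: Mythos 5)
Your proposal is correct and follows essentially the same route as the paper: apply Lemma~\ref{l6.12} to reduce \eqref{c3.5:e1} to \(\diam(\bar{B}_1 \cup \bar{B}_2) = \diam B^*\), then get one inequality from the inclusion \(\bar{B}_1 \cup \bar{B}_2 \subseteq B^*\) and the other from Proposition~\ref{p2.12} together with equality~\eqref{e2.3} applied to the ball \(B^* = \bar{B}_r(c)\). Your extra check that the union is bounded is a harmless addition, not a different argument.
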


\begin{proof}
By Lemma~\ref{l6.12}, we have the equality
\[
d_H(\bar{B}_1, \bar{B}_2) = \diam (\bar{B}_1 \cup \bar{B}_2).
\]
Hence, \eqref{c3.5:e1} holds iff
\begin{equation}\label{c3.5:e2}
\diam (\bar{B}_1 \cup \bar{B}_2) = \diam B^*.
\end{equation}
The inclusion \(\bar{B}_1 \cup \bar{B}_2 \subseteq B^*\) implies the inequality
\begin{equation}\label{c3.5:e3}
\diam (\bar{B}_1 \cup \bar{B}_2) \leqslant \diam B^*.
\end{equation}
Moreover, using Proposition~\ref{p2.12}, we obtain the equality
\begin{equation}\label{c3.5:e4}
B^* = \bar{B}_r(c^*),
\end{equation}
where \(c^*\) is an arbitrary point of \(B^*\) and
\begin{equation}\label{c3.5:e5}
r = \diam (\bar{B}_1 \cup \bar{B}_2).
\end{equation}
Consequently,  \eqref{c3.5:e4}, \eqref{c3.5:e5} and equality~\eqref{e2.3} with \(A = B^*\) and \(p_0 = c^*\) give us
\begin{equation}\label{c3.5:e6}
\diam B^* \leqslant r = \diam (\bar{B}_1 \cup \bar{B}_2).
\end{equation}
Now~\eqref{c3.5:e2} follows from \eqref{c3.5:e3} and \eqref{c3.5:e6}.
\end{proof}

\begin{theorem}\label{t3.2}
Let \((X, d)\) be a metric space. Then the set
\begin{equation}\label{t3.2:e1}
\hat{X} := \bigl\{\{x\} \colon x\in X\bigr\}
\end{equation}
is an isometric copy of \((X, d)\) in \((\mathfrak{M}_X, d_H)\) and it is a closed subset of \(\mathfrak{M}_X\).
\end{theorem}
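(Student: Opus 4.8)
The plan is to prove the two assertions separately; note that neither needs the strong triangle inequality, so the argument works verbatim for all metric spaces.

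For the isometry claim I would first record that $\hat{X} \subseteq \mathfrak{M}_X$: each singleton is nonempty, bounded, and closed in $(X, d)$ (equivalently, $\{x\} = \bar{B}_0(x) \in \bar{\BB}_X \subseteq \mathfrak{M}_X$ by Proposition~\ref{p3.2}). Then introduce the map $\Phi \colon X \to \hat{X}$, $\Phi(x) := \{x\}$, which is a bijection onto $\hat{X}$ by the very definition~\eqref{t3.2:e1}. A one-line computation from~\eqref{e6.11} gives, for all $x$, $y \in X$,
\[
d_H(\{x\}, \{y\}) = \max\{d(x,y),\, d(x,y)\} = d(x,y),
\]
so $\Phi$ is an isometry in the sense of Definition~\ref{d2.2}, and hence $\bigl(\hat{X}, d_H|_{\hat{X}\times\hat{X}}\bigr)$ is an isometric copy of $(X,d)$.

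For closedness I would show that the complement $\mathfrak{M}_X \setminus \hat{X}$ is open. Fix $A \in \mathfrak{M}_X$ with $|A| \geqslant 2$. First observe, again from~\eqref{e6.11}, that $d_H(A, \{x\}) = \sup_{a \in A} d(a, x)$ for every $x \in X$, since the ``$\inf$'' term in the max is dominated by the ``$\sup$'' term. Now pick distinct points $a_1$, $a_2 \in A$ and set $\rho := \tfrac{1}{2} d(a_1, a_2) > 0$. By the triangle inequality, $\max\{d(a_1, x), d(a_2, x)\} \geqslant \tfrac{1}{2} d(a_1, a_2) = \rho$ for every $x \in X$, whence $d_H(A, \{x\}) \geqslant \rho$ for every $x \in X$. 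Thus the open $d_H$-ball of radius $\rho$ about $A$ contains no element of $\hat{X}$, so $\mathfrak{M}_X \setminus \hat{X}$ is open and $\hat{X}$ is closed in $(\mathfrak{M}_X, d_H)$.

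There is no genuinely hard step; the two points to be careful with are (a) the simplification $d_H(A, \{x\}) = \sup_{a \in A} d(a, x)$, and (b) the fact that the lower bound $d_H(A, \{x\}) \geqslant \rho$ is \emph{uniform} in $x$ — this uniformity is precisely what upgrades the trivial observation ``$A$ is not a singleton'' into openness of the complement.
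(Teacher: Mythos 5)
Your proof is correct and complete. Note that the paper does not prove Theorem~\ref{t3.2} at all: it simply refers to Theorem~10.7.1 of \cite{Sea2007}, so there is no in-paper argument to compare against; your computation $d_H(\{x\},\{y\})=d(x,y)$ and the uniform lower bound $d_H(A,\{x\})\geqslant \tfrac12 d(a_1,a_2)$ for $|A|\geqslant 2$ (which makes the complement of $\hat{X}$ open) constitute the standard, fully self-contained proof, and the two delicate points you flag — the simplification of $d_H(A,\{x\})$ to $\sup_{a\in A} d(a,x)$ and the uniformity of the bound in $x$ — are exactly the ones that matter.
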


For the proof see Theorem~10.7.1 of~\cite{Sea2007}.

Since for every \(x \in X\) we have \(\{x\} \in \bar{\BB}_X\), Theorem~\ref{t3.2} and Proposition~\ref{p3.2} give us the following.

\begin{corollary}\label{c3.7}
Let \((X, d)\) be a metric space. Then \((\hat{X}, d_H)\) is an isometric copy of \((X, d)\) in \((\bar{\BB}_X, d_H)\) and \(\hat{X}\) is a closed subset of \(\bar{\BB}_X\).
\end{corollary}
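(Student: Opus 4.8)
The plan is to deduce both assertions quickly from Theorem~\ref{t3.2} and Proposition~\ref{p3.2}, since the substantive content --- that the assignment $x \mapsto \{x\}$ is a distance-preserving bijection onto its image and that this image is closed --- has already been established inside the larger space $(\mathfrak{M}_X, d_H)$. The only work left is to check that these properties survive the passage to the intermediate subspace $\bar{\BB}_X$.

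First I would record the inclusions $\hat{X} \subseteq \bar{\BB}_X \subseteq \mathfrak{M}_X$. The right-hand inclusion is Proposition~\ref{p3.2}. For the left-hand one, note that by \eqref{e2.4} with $r = 0$ we have $\{x\} = \bar{B}_0(x) \in \bar{\BB}_X$ for every $x \in X$, so each element of $\hat{X}$ is a closed ball.

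For the isometry statement, Theorem~\ref{t3.2} says the map $\Phi \colon X \to \mathfrak{M}_X$, $\Phi(x) = \{x\}$, is an isometry of $(X, d)$ onto $(\hat{X}, d_H|_{\hat{X} \times \hat{X}})$. Because $\hat{X} \subseteq \bar{\BB}_X$, the value of $d_H$ on $\hat{X} \times \hat{X}$ is the same whether $\hat{X}$ is regarded as a subset of $\bar{\BB}_X$ or of $\mathfrak{M}_X$; hence $\Phi$ is also an isometry of $(X, d)$ onto $(\hat{X}, d_H)$ sitting inside $(\bar{\BB}_X, d_H)$. For the closedness statement, Theorem~\ref{t3.2} gives that $\hat{X}$ is closed in $(\mathfrak{M}_X, d_H)$, and the general fact that a subset closed in an ambient metric space remains closed in any subspace that contains it --- applied with ambient space $\mathfrak{M}_X$, subspace $\bar{\BB}_X$, and the set $\hat{X} \subseteq \bar{\BB}_X$ --- yields that $\hat{X}$ is closed in $(\bar{\BB}_X, d_H)$.

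I do not anticipate a genuine obstacle; the proof is essentially bookkeeping about which ambient space is in force, and the single point deserving an explicit sentence is that both ``being an isometric copy'' and ``being closed'' relativize correctly when one shrinks the ambient space from $\mathfrak{M}_X$ to $\bar{\BB}_X$. (A self-contained alternative would be to take a sequence $(\{x_n\})$ in $\hat{X}$ that converges in $\bar{\BB}_X$ to some ball $\bar{B}$, observe that it is then $d_H$-Cauchy, and argue directly that the limit ball must be a singleton; but routing through Theorem~\ref{t3.2} avoids reproving that here.)
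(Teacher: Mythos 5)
Your proposal is correct and follows exactly the paper's route: the corollary is obtained by combining Theorem~\ref{t3.2} with Proposition~\ref{p3.2} and the observation that \(\{x\} = \bar{B}_0(x) \in \bar{\BB}_X\) for every \(x \in X\), so that the isometry and the closedness relativize from \(\mathfrak{M}_X\) to the intermediate subspace \(\bar{\BB}_X\). Your write-up merely makes explicit the bookkeeping that the paper leaves implicit, and the relativization facts you invoke are standard and correctly applied.
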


\begin{remark}\label{r3.8}
In Corollary~\ref{c3.7} we use the symbol \(d_H\) to denote the restrictions of the Hausdorff metric
\[
d_H \colon \mathfrak{M}_X \times \mathfrak{M}_X \to \mathbb{R}^+
\]
on the sets \(\bar{\BB}_X \times \bar{\BB}_X\) and \(\hat{X} \times \hat{X}\). For simplicity, we will use the same notation \(d_H\) for the metric \(d_H|_{\mathbf{A} \times \mathbf{A}}\) in the case of any nonempty \(\mathbf{A} \subseteq \mathfrak{M}_X\).
\end{remark}

The following result can be derived from Theorem~\ref{t3.2} and Lemma~2.4 of \cite{Qiu2014pNUAA}.

\begin{proposition}\label{p3.4}
Let \((X, d)\) be a metric space. Then \((\mathfrak{M}_X, d_H)\) is ultrametric if and only if \((X, d)\) is ultrametric.
\end{proposition}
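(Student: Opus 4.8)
The plan is to prove the two implications separately. Since it is already recorded above that $d_H$ is a metric on $\mathfrak{M}_X$ for every metric space $(X,d)$, in both directions the only property at stake is the strong triangle inequality.

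For the implication ``$(\mathfrak{M}_X, d_H)$ ultrametric $\Rightarrow$ $(X,d)$ ultrametric'' I would simply invoke Theorem~\ref{t3.2}: the set $\hat{X} = \{\{x\} \colon x \in X\}$ is an isometric copy of $(X,d)$ inside $(\mathfrak{M}_X, d_H)$. Since the strong triangle inequality passes to arbitrary subsets, $(\hat{X}, d_H)$ is then ultrametric, and transporting this property along the isometry $x \mapsto \{x\}$ shows that $(X,d)$ is ultrametric.

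For the converse I would assume $(X,d)$ ultrametric and verify the strong triangle inequality for $d_H$ directly. Fix $A, B, C \in \mathfrak{M}_X$ and write $e(A,B) := \sup_{a \in A} \inf_{b \in B} d(a,b)$ for the directed excess, so that $d_H(A,B) = \max\{e(A,B), e(B,A)\}$ by~\eqref{e6.11}; note that each such excess is finite because the sets are nonempty and bounded. The core step is the estimate $e(A,C) \leqslant \max\{e(A,B), e(B,C)\}$, which I would prove by an $\varepsilon$-argument: given $a \in A$ and $\varepsilon > 0$, choose $b \in B$ with $d(a,b) < e(A,B) + \varepsilon$ and then $c \in C$ with $d(b,c) < e(B,C) + \varepsilon$; the strong triangle inequality gives
\[
d(a,c) \leqslant \max\{d(a,b), d(b,c)\} < \max\{e(A,B), e(B,C)\} + \varepsilon,
\]
so $\inf_{c \in C} d(a,c) \leqslant \max\{e(A,B), e(B,C)\}$, and taking the supremum over $a \in A$ yields the estimate. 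Applying it to all the relevant pairs gives
\[
d_H(A,C) = \max\{e(A,C), e(C,A)\} \leqslant \max\{e(A,B), e(B,C), e(C,B), e(B,A)\} = \max\{d_H(A,B), d_H(B,C)\},
\]
which is precisely the strong triangle inequality for $d_H$. Alternatively, this direction can be quoted from Lemma~2.4 of~\cite{Qiu2014pNUAA}, as the remark preceding the statement suggests.

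I do not expect a genuine obstacle here; the only points requiring mild care are the $\varepsilon$-bookkeeping around the infima (which need not be attained) and the check that all the excesses that appear are finite, so that the maxima above are meaningful.
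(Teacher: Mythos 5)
Your proposal is correct, and its skeleton coincides with the paper's: the paper proves nothing directly but states that the proposition ``can be derived from Theorem~\ref{t3.2} and Lemma~2.4 of \cite{Qiu2014pNUAA}'', i.e.\ exactly your two ingredients — the singleton embedding \(\hat{X}\) for the implication ``\((\mathfrak{M}_X, d_H)\) ultrametric \(\Rightarrow\) \((X,d)\) ultrametric'' (ultrametricity is inherited by subspaces and transported by isometries), and the ultrametricity of \(d_H\) on \(\mathfrak{M}_X\) for the converse. The only difference is that where the paper quotes Qiu's Lemma~2.4, you prove the needed fact from scratch via the directed excess \(e(A,B)=\sup_{a\in A}\inf_{b\in B}d(a,b)\) and the estimate \(e(A,C)\leqslant\max\{e(A,B),e(B,C)\}\); your \(\varepsilon\)-argument for this estimate is sound (the infima need not be attained, and you handle that correctly), the finiteness of the excesses is guaranteed since the members of \(\mathfrak{M}_X\) are nonempty and bounded, and the final chain \(d_H(A,C)\leqslant\max\{d_H(A,B),d_H(B,C)\}\) follows. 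This buys you a self-contained proof at the cost of a few lines, which is a reasonable trade; no gaps.
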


The next theorem can be considered as a ``ballean'' modification of Proposition~\ref{p3.4}.

\begin{theorem}\label{l6.13}
Let \((X, d)\) be a metric space. Then the following statements are equivalent:
\begin{enumerate}
\item\label{l6.13:s1} The metric space \((X, d)\) is ultrametric.
\item\label{l6.13:s2} The metric space \((\bar{\BB}_X, d_H)\) is ultrametric.
\end{enumerate}
\end{theorem}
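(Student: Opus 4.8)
The plan is to prove both implications. The forward direction $(\ref{l6.13:s1}) \Rightarrow (\ref{l6.13:s2})$ is immediate from what has already been established: if $(X,d)$ is ultrametric, then $\bar{\BB}_X \subseteq \mathfrak{M}_X$ by Proposition~\ref{p3.2}, and $(\mathfrak{M}_X, d_H)$ is ultrametric by Proposition~\ref{p3.4}; hence its subspace $(\bar{\BB}_X, d_H)$ is ultrametric as well. (Alternatively, one can read this off directly from Lemma~\ref{l6.12} together with the easy observation $\diam(\bar{B}_1 \cup \bar{B}_3) \le \max\{\diam(\bar{B}_1 \cup \bar{B}_2), \diam(\bar{B}_2 \cup \bar{B}_3)\}$, but invoking Propositions~\ref{p3.2} and~\ref{p3.4} is cleaner.)

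For the contrapositive of $(\ref{l6.13:s2}) \Rightarrow (\ref{l6.13:s1})$, suppose $(X,d)$ is \emph{not} ultrametric. Then there are points $x, y, z \in X$ violating the strong triangle inequality, say $d(x,y) > \max\{d(x,z), d(z,y)\}$. The natural move is to transfer this failure to the singletons via the isometric copy $\hat{X}$ of $(X,d)$ inside $(\bar{\BB}_X, d_H)$ furnished by Corollary~\ref{c3.7}: the three closed balls $\{x\}, \{y\}, \{z\} \in \bar{\BB}_X$ satisfy $d_H(\{x\},\{y\}) = d(x,y)$, $d_H(\{x\},\{z\}) = d(x,z)$, $d_H(\{z\},\{y\}) = d(z,y)$, so
\[
d_H(\{x\},\{y\}) > \max\{d_H(\{x\},\{z\}), d_H(\{z\},\{y\})\},
\]
which means the strong triangle inequality fails for the metric $d_H$ on $\bar{\BB}_X$. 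Hence $(\bar{\BB}_X, d_H)$ is not ultrametric, completing the contrapositive.

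I do not anticipate a genuine obstacle here; the only point requiring a little care is making sure $d_H$ restricted to $\bar{\BB}_X \times \bar{\BB}_X$ is exactly the metric under discussion (so that the singletons, which lie in $\bar{\BB}_X$, carry the right values) — but this is precisely the content of Corollary~\ref{c3.7} and Remark~\ref{r3.8}. One should also note that the whole statement presupposes, implicitly, that $d_H$ is already a bona fide metric on $\bar{\BB}_X$ for \emph{every} metric space $(X,d)$, which follows from Theorem~\ref{t3.2} and the fact that $\bar{\BB}_X$ is a subset of $\mathfrak{M}_X$; with that in hand, both directions above are short.
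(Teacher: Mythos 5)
Your proposal is correct and follows essentially the same route as the paper: the forward direction via Proposition~\ref{p3.4} and the subspace property, and the reverse direction via the isometric copy $\hat{X}$ from Corollary~\ref{c3.7} (your contrapositive phrasing is just a logically equivalent restatement of the paper's direct argument that a subspace of an ultrametric space is ultrametric).
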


\begin{proof}
\(\ref{l6.13:s1} \Rightarrow \ref{l6.13:s2}\). Let \((X, d)\) be an ultrametric space. Then \((\mathfrak{M}_X, d_H)\) is ultrametric by Proposition~\ref{p3.4}. Hence, \((\bar{\BB}_X, d_H)\) is ultrametric as a subspace of the ultrametric space \((\mathfrak{M}_X, d_H)\).

\(\ref{l6.13:s2} \Rightarrow \ref{l6.13:s1}\). Let \((\bar{\BB}_X, d_H)\) be an ultrametric space and let
\[
\hat{X} := \bigl\{\{x\} \colon x\in X\bigr\}.
\]
By Corollary~\ref{c3.7}, \((\hat{X}, d_H)\) and \((X, d)\) are isometric and \(\hat{X} \subseteq \bar{\BB}_X\) holds. Consequently, \((\hat{X}, d_H)\) is ultrametric as a subspace of \((\bar{\BB}_X, d_H)\) and, in addition, \((X, d)\) is ultrametric as a space isometric to \((\hat{X}, d_H)\).
\end{proof}

The next proposition gives us a generalization of Proposition~\ref{c3.5}.

\begin{proposition}\label{p3.7}
Let \((X, d)\) be an ultrametric space, \(\mathbf{A}\) be a bounded subset of \((\bar{\BB}_X, d_H)\), and let
\begin{equation}\label{p3.7:e1}
A := \bigcup_{\bar{B} \in \mathbf{A}} \bar{B}.
\end{equation}
If \(\mathbf{A}\) contains at least two elements, then the equalities
\begin{equation}\label{p3.7:e2}
\diam \mathbf{A} = \diam A = \diam B^*
\end{equation}
hold, where \(\diam \mathbf{A}\) is the diameter of the set \(\mathbf{A}\) in \((\bar{\BB}_X, d_H)\), \(\diam A\) is the the diameter of the set \(A\) in \((X, d)\) and \(B^* = B^*(A)\) is the smallest ball containing the set \(A\).
\end{proposition}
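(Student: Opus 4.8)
The plan is to prove the chain $\diam\mathbf{A}=\diam A=\diam B^{*}$ in three moves: first check that $A$ is a nonempty bounded set so that $B^{*}=B^{*}(A)$ is well defined, then dispose of the right-hand equality exactly as in the proof of Proposition~\ref{c3.5}, and finally sandwich $\diam\mathbf{A}$ and $\diam A$ between each other using Lemma~\ref{l6.12}. For the preliminary step: $A$ is nonempty because every closed ball is nonempty and $\mathbf{A}\neq\varnothing$. For boundedness, fix $\bar B_{0}\in\mathbf{A}$ with center $c_{0}$; for each $\bar B\in\mathbf{A}$ distinct from $\bar B_{0}$, Lemma~\ref{l6.12} gives $\diam(\bar B_{0}\cup\bar B)=d_{H}(\bar B_{0},\bar B)\leqslant\diam\mathbf{A}<\infty$, and then \eqref{e2.3} applied to $\bar B_{0}\cup\bar B$ with base point $c_{0}$ shows $\bar B\subseteq\bar B_{\diam\mathbf{A}}(c_{0})$; hence $A\subseteq\bar B_{\diam\mathbf{A}}(c_{0})$. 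So $B^{*}(A)$ exists by Proposition~\ref{p2.12}, and $B^{*}=\bar B_{r}(c^{*})$ with $r:=\diam A$ and $c^{*}$ an arbitrary point of $A$. The inclusion $A\subseteq B^{*}$ yields $\diam A\leqslant\diam B^{*}$, while \eqref{e2.3} applied to $B^{*}=\bar B_{r}(c^{*})$ with base point $c^{*}\in A$ gives $\diam B^{*}=\sup\{d(p,c^{*})\colon p\in B^{*}\}\leqslant r=\diam A$; thus $\diam A=\diam B^{*}$.

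It remains to prove $\diam\mathbf{A}=\diam A$. For the inequality $\diam\mathbf{A}\leqslant\diam A$, note that for any two distinct $\bar B_{1},\bar B_{2}\in\mathbf{A}$ we have $\bar B_{1}\cup\bar B_{2}\subseteq A$, so Lemma~\ref{l6.12} gives $d_{H}(\bar B_{1},\bar B_{2})=\diam(\bar B_{1}\cup\bar B_{2})\leqslant\diam A$; taking the supremum over all pairs from $\mathbf{A}$ (coinciding pairs contribute $0$) yields $\diam\mathbf{A}\leqslant\diam A$.

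For the reverse inequality $\diam A\leqslant\diam\mathbf{A}$, take arbitrary $x,y\in A$ and choose $\bar B_{1},\bar B_{2}\in\mathbf{A}$ with $x\in\bar B_{1}$ and $y\in\bar B_{2}$. If $\bar B_{1}\neq\bar B_{2}$, then $d(x,y)\leqslant\diam(\bar B_{1}\cup\bar B_{2})=d_{H}(\bar B_{1},\bar B_{2})\leqslant\diam\mathbf{A}$ by Lemma~\ref{l6.12}. If $\bar B_{1}=\bar B_{2}$, this is where the hypothesis $\card\mathbf{A}\geqslant 2$ enters: pick $\bar B_{3}\in\mathbf{A}$ with $\bar B_{3}\neq\bar B_{1}$, and use monotonicity of the diameter together with Lemma~\ref{l6.12} to get $d(x,y)\leqslant\diam\bar B_{1}\leqslant\diam(\bar B_{1}\cup\bar B_{3})=d_{H}(\bar B_{1},\bar B_{3})\leqslant\diam\mathbf{A}$. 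In either case $d(x,y)\leqslant\diam\mathbf{A}$, so $\diam A\leqslant\diam\mathbf{A}$, which completes the argument. I expect the only genuinely delicate point to be this last case distinction: the hypothesis that $\mathbf{A}$ has at least two elements is essential, since a single ball of positive diameter would make $\diam A$ positive while $\diam\mathbf{A}=0$, and the ``same ball'' case must therefore be routed through a second element of $\mathbf{A}$. Everything else is routine bookkeeping with Lemma~\ref{l6.12}, \eqref{e2.3}, and Proposition~\ref{p2.12}.
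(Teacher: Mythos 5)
Your proof is correct, and its skeleton (Lemma~\ref{l6.12} for distinct balls, equality~\eqref{e2.3}, and Proposition~\ref{p2.12} to pass from \(\diam A\) to \(\diam B^*\)) matches the paper's. The route to \(\diam\mathbf{A}=\diam A\) differs in one genuine respect: the paper first invokes Theorem~\ref{l6.13} to conclude that \((\bar{\BB}_X,d_H)\) is ultrametric, applies \eqref{e2.3} \emph{in the hyperspace} to anchor everything at one fixed ball \(\bar{B}_0\) and one point \(b_0\in\bar{B}_0\), and then runs both inequalities through that single base point; you never use Theorem~\ref{l6.13}, instead estimating over arbitrary pairs of balls and arbitrary pairs of points, with the case \(x,y\in\bar{B}_1=\bar{B}_2\) routed through a third element of \(\mathbf{A}\). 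Your version is slightly more elementary (no appeal to ultrametricity of \(d_H\) is needed, only to Lemma~\ref{l6.12}) and it makes fully explicit where the hypothesis \(\card\mathbf{A}\geqslant 2\) enters, a point the paper handles only implicitly through its choice of \(\bar{B}_0\); the paper's version buys a shorter computation by reducing all suprema to distances from \(b_0\). You also verify up front that \(A\) is bounded so that \(B^*(A)\) exists (the paper gets boundedness of \(A\) only after proving \(\diam\mathbf{A}=\diam A\)); one cosmetic remark there: the same application of \eqref{e2.3} actually gives \(\bar{B}_0\cup\bar{B}\subseteq\bar{B}_{\diam\mathbf{A}}(c_0)\), which is what you need to cover \(\bar{B}_0\) itself in the union defining \(A\) — worth stating, though it is exactly the inequality you already derived.
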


\begin{proof}
Suppose that \(\mathbf{A}\) has at least two elements and consider an arbitrary \(\bar{B}_0 \in \mathbf{A}\). The metric space \((\bar{\BB}_X, d_H)\) is ultrametric by Theorem~\ref{l6.13}. Hence, using formula~\eqref{e2.3} with \((X, d) = (\bar{\BB}_X, d_H)\) and \(A = \mathbf{A}\) and \(p_0 = \bar{B}_0\), we obtain the equality
\[
\diam \mathbf{A} = \sup\{d_H(\bar{B}_0, \bar{B}) \colon \bar{B} \in \mathbf{A}\}.
\]
The last equality and \eqref{l6.12:e1} imply the equality
\begin{equation}\label{p3.7:e3}
\diam \mathbf{A} = \sup_{\bar{B} \in \mathbf{A}} \diam (\bar{B}_0 \cup \bar{B}).
\end{equation}
Let \(b_0\) be a point of \(\bar{B}_0\). Then formula~\eqref{e2.3} with \(A = \bar{B}_0 \cup \bar{B}\) and \(p_0 = b_0\) gives us
\begin{equation}\label{p3.7:e4}
\diam (\bar{B}_0 \cup \bar{B}) = \sup \{d(x, b_0) \colon x \in \bar{B}_0 \cup \bar{B}\}.
\end{equation}
Equality~\eqref{p3.7:e1} implies the inclusion \(\bar{B} \subseteq A\) for every \(\bar{B} \in \mathbf{A}\). Hence, using~\eqref{p3.7:e3} and \eqref{p3.7:e4}, we obtain
\[
\diam \mathbf{A} \leqslant \sup \{d(x, b_0) \colon x \in A\}.
\]
The last inequality and equality~\eqref{e2.3} with \(p_0 = b_0\) imply
\begin{equation}\label{p3.7:e5}
\diam \mathbf{A} \leqslant \sup \{d(x, b_0) \colon x \in A\} = \diam A.
\end{equation}
Thus, the inequality
\[
\diam \mathbf{A} \leqslant \diam A
\]
holds. To prove the converse inequality
\begin{equation}\label{p3.7:e6}
\diam \mathbf{A} \geqslant \diam A,
\end{equation}
it suffices to note that, for every \(x \in A\), there is \(\bar{B} \in \mathbf{A}\) such that
\[
\{x, b_0\} \subseteq \bar{B}_0 \cup \bar{B}.
\]
The last inclusion and \eqref{p3.7:e3} give the inequality
\[
d(x, b_0) \leqslant \diam \mathbf{A}
\]
for every \(x \in A\), that implies \eqref{p3.7:e6} by using~\eqref{e2.3}. The first equality in double equality~\eqref{p3.7:e2} follows.

To prove the second one we note that \(A\) is bounded in \((X, d)\) because \(\mathbf{A}\) is bounded in \((\bar{\BB}_X, d_H)\) and
\[
\diam \mathbf{A} = \diam A
\]
holds (as was proved above). Hence, \(\diam A = \diam B^*\) holds by Proposition~\ref{p2.12}.
\end{proof}

\begin{remark}\label{r3.9}
If the set \(\mathbf{A}\) contains a single element \(\bar{B}\), then equality~\eqref{p3.7:e2} is satisfied if and only if \(\diam \bar{B} = 0\) holds, i.e., iff there is \(c \in X\) such that \(\bar{B} = \{c\}\). In this case we have \(\mathbf{A} = \{\{c\}\}\), where \(\{\{c\}\}\) is the set containing the single element \(\{c\}\).
\end{remark}

The next theorem gives us an interconnection between isolated points of ultrametric spaces and isolated points of their balleans.

\begin{theorem}\label{t3.11}
Let \((X, d)\) be an ultrametric space. Then the equalities
\begin{equation}\label{t3.11:e1}
\bar{\BB}_X^0 = \{\bar{B} \in \bar{\BB}_X \colon \diam \bar{B} > 0\} \cup \bigl\{\{x\} \colon x \in \operatorname{iso}(X)\bigr\} = \operatorname{iso}(\bar{\BB}_X)
\end{equation}
hold.
\end{theorem}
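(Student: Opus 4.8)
The plan is to derive~\eqref{t3.11:e1} from the cyclic chain of inclusions
\[
S \subseteq \bar{\BB}_X^0 \subseteq \operatorname{iso}(\bar{\BB}_X) \subseteq S,
\]
where \(S\) denotes the middle set \(\{\bar{B} \in \bar{\BB}_X \colon \diam \bar{B} > 0\} \cup \bigl\{\{x\} \colon x \in \operatorname{iso}(X)\bigr\}\); proving these three inclusions forces the three sets to coincide, which is exactly~\eqref{t3.11:e1}. The main tool for the two inclusions involving \(\operatorname{iso}(\bar{\BB}_X)\) is Lemma~\ref{l6.12}, which identifies \(d_H(\bar{B}_1, \bar{B}_2)\) with \(\diam(\bar{B}_1 \cup \bar{B}_2)\) for distinct balls.

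For \(S \subseteq \bar{\BB}_X^0\) I would only note that a ball with \(\diam \bar{B} > 0\) has more than one point, hence cannot be \(\bar{B}_0(c)\) for any \(c\), and that \(\{x\}\) with \(x \in \operatorname{iso}(X)\) is, by Definition~\ref{d1.21}, of the form \(\bar{B}_r(x)\) with \(r > 0\). For \(\bar{\BB}_X^0 \subseteq \operatorname{iso}(\bar{\BB}_X)\) I would fix \(\bar{B} \in \bar{\BB}_X^0\) and distinguish two cases. If \(\rho := \diam \bar{B} > 0\), then by Lemma~\ref{l6.12} every \(\bar{B}' \in \bar{\BB}_X \setminus \{\bar{B}\}\) satisfies \(d_H(\bar{B}, \bar{B}') = \diam(\bar{B} \cup \bar{B}') \geq \diam \bar{B} = \rho\), so the \(d_H\)-ball of radius \(\rho/2\) about \(\bar{B}\) meets \(\bar{\BB}_X\) only in \(\bar{B}\). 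If \(\diam \bar{B} = 0\), then \(\bar{B} = \{c\}\) and, since \(\bar{B} = \bar{B}_r(c)\) with \(r > 0\) by definition of \(\bar{\BB}_X^0\), the point \(c\) lies in \(\operatorname{iso}(X)\); now any \(\bar{B}' \in \bar{\BB}_X\setminus\{\{c\}\}\) contains some \(z \ne c\) (either \(c \notin \bar{B}'\), or \(\bar{B}'\) has at least two points), so \(d_H(\{c\}, \bar{B}') = \diam(\{c\} \cup \bar{B}') \geq d(c, z) > r\), and \(\{c\}\) is isolated in \((\bar{\BB}_X, d_H)\) with witness \(r\).

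For the remaining inclusion \(\operatorname{iso}(\bar{\BB}_X) \subseteq S\) I would take \(\bar{B} \in \operatorname{iso}(\bar{\BB}_X)\); if \(\diam \bar{B} > 0\) then \(\bar{B} \in S\) trivially, so suppose \(\bar{B} = \{c\}\) and show \(c \in \operatorname{iso}(X)\). If instead \(c \in \operatorname{acc}(X)\) --- the only alternative, by~\eqref{s1:e6} applied with \(Y = S = X\) --- then for each \(r > 0\) there is \(x_r \in \bar{B}_r(c)\) with \(x_r \ne c\), and \(\{x_r\} \in \bar{\BB}_X\) satisfies \(\{x_r\} \ne \{c\}\) and \(d_H(\{c\}, \{x_r\}) = d(c, x_r) \leq r\), contradicting that \(\{c\}\) is an isolated point of \((\bar{\BB}_X, d_H)\). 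Hence \(c \in \operatorname{iso}(X)\) and \(\bar{B} = \{c\} \in S\).

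Every individual step is elementary once Lemma~\ref{l6.12} is available. The only bookkeeping requiring care --- and the closest thing to an obstacle --- is keeping straight, among the singleton balls \(\{c\}\), the dichotomy between \(c \in \operatorname{iso}(X)\) (so \(\{c\} \in \bar{\BB}_X^0\) and \(\{c\}\) is an isolated point of the ballean) and \(c \in \operatorname{acc}(X)\) (so \(\{c\}\) is in neither set); this is precisely what makes the three-set equality nontrivial. I do not anticipate any genuine difficulty.
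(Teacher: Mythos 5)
Your proposal is correct and is essentially the paper's argument: the same key tool (Lemma~\ref{l6.12}) shows that balls of positive diameter and singletons \(\{c\}\) with \(c \in \operatorname{iso}(X)\) are isolated in \((\bar{\BB}_X, d_H)\), and the same approximation of \(\{c\}\) by nearby balls handles the case \(c \in \operatorname{acc}(X)\). The only difference is bookkeeping: you run the cycle \(S \subseteq \bar{\BB}_X^0 \subseteq \operatorname{iso}(\bar{\BB}_X) \subseteq S\), whereas the paper proves \(\bar{\BB}_X^0 \subseteq S \subseteq \operatorname{iso}(\bar{\BB}_X)\) and closes the loop via \(\bar{\BB}_X \setminus \bar{\BB}_X^0 \subseteq \operatorname{acc}(\bar{\BB}_X)\) together with the disjointness of \(\operatorname{iso}\) and \(\operatorname{acc}\).
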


\begin{proof}
First we will prove the inclusions
\begin{gather}
\label{t3.11:e2.1}
\bar{\BB}_X^0 \subseteq \{\bar{B} \in \bar{\BB}_X \colon \diam \bar{B} > 0\} \cup \bigl\{\{x\} \colon x \in \operatorname{iso}(X)\bigr\},\\
\label{t3.11:e2.2}
\{\bar{B} \in \bar{\BB}_X \colon \diam \bar{B} > 0\} \cup \bigl\{\{x\} \colon x \in \operatorname{iso}(X)\bigr\} \subseteq \operatorname{iso} (\bar{\BB}_X)
\end{gather}
and
\begin{equation}\label{t3.11:e3}
\bar{\BB}_X \setminus \bar{\BB}_X^0 \subseteq \operatorname{acc}(\bar{\BB}_X).
\end{equation}

Let us prove inclusion~\eqref{t3.11:e2.1}. Let us consider an arbitrary \(\bar{B}_0 \in \bar{\BB}_X^0\). To prove~\eqref{t3.11:e2.1}, it is enough to show that
\begin{equation}\label{t3.11:e3.1}
\bar{B}_0 \in \{\bar{B} \in \bar{\BB}_X \colon \diam \bar{B} > 0\} \cup \bigl\{\{x\} \colon x \in \operatorname{iso}(X)\bigr\}.
\end{equation}
If \(\diam \bar{B}_0\) is strictly positive, then~\eqref{t3.11:e3.1} evidently holds. Let us consider the case \(\diam \bar{B}_0 = 0\). The last equality implies the existence of a point \(x_0 \in X\) such that
\begin{equation}\label{t3.11:e3.7}
\bar{B}_0 = \{x_0\}.
\end{equation}
It follows from the definition of the set \(\bar{\BB}_X^0\) that the equality
\begin{equation}\label{t3.11:e3.8}
\bar{B}_0 = \bar{B}_{r_0}(x_0)
\end{equation}
holds for some \(r_0 > 0\). Equalities~\eqref{t3.11:e3.7} and \eqref{t3.11:e3.8} imply that \(x_0\) is an isolated point of \((X, d)\) by Definition~\ref{d1.21}. Thus, we have
\[
\bar{B}_0 = \{x_0\} \in \bigl\{\{x\} \colon x \in \operatorname{iso}(X)\bigr\}.
\]
Membership~\eqref{t3.11:e3.1} follows.

Let us prove inclusion~\eqref{t3.11:e2.2}. To do this, we consider an arbitrary
\[
\bar{B}_0 \in \{\bar{B} \in \bar{\BB}_X \colon \diam \bar{B} > 0\} \cup \bigl\{\{x\} \colon x \in \operatorname{iso}(X)\bigr\}
\]
and show that
\begin{equation}\label{t3.11:e4}
\bar{B}_0 \in \operatorname{iso} (\bar{\BB}_X).
\end{equation}
Suppose first that \(\diam \bar{B}_0 = r_0 > 0\). Then, for every \(\bar{B} \in \bar{\BB}_X\) different from \(\bar{B}_0\), equality~\eqref{l6.12:e1} with \(B_1 = \bar{B}_0\) and \(B_2 = \bar{B}\) gives us
\[
d_H(\bar{B}_0, \bar{B}) \geqslant r_0 > 0,
\]
that implies~\eqref{t3.11:e4}.

Let us consider the case when \(\bar{B}_0 \in \bigl\{\{x\} \colon x \in \operatorname{iso}(X)\bigr\}\). Let \(c_0\) be an isolated point of \((X, d)\) such that
\begin{equation}\label{t3.11:e5}
\bar{B}_0 = \{c_0\}
\end{equation}
The membership \(c_0 \in \operatorname{iso}(X)\) implies that there is \(r_0 > 0\) such that
\[
d(c_0, x) > r_0
\]
for every \(x \in X \setminus \{c_0\}\). If \(\bar{B} \in \bar{\BB}_X\) and \(\bar{B} \neq \bar{B}_0\) hold, then \(\bar{B}\) contains a point \(x \in X\) such that \(x \neq c_0\). Consequently, we have the inclusion
\[
\{x, c_0\} \subseteq \bar{B} \cup \bar{B}_0.
\]
The last inclusion, formula~\eqref{l6.12:e1} with \(B_1 = \bar{B}_0\) and \(B_2 = \bar{B}\) and equality~\eqref{t3.11:e5} imply
\[
d_H(\bar{B}_0, \bar{B}) \geqslant \diam (\bar{B} \cup \bar{B}_0) \geqslant \diam \{x, c_0\} > r_0.
\]

Thus, \eqref{t3.11:e4} holds for all \(\bar{B} \in \bar{\BB}_X\) different from \(\bar{B}_0\). Inclusion~\eqref{t3.11:e2.2} follows.

Let us prove~\eqref{t3.11:e3}. Let us consider an arbitrary
\begin{equation}\label{t3.11:e6}
\bar{B}_1 \in \bar{\BB}_X \setminus \bar{\BB}_X^0.
\end{equation}
If \(\diam \bar{B}_1 > 0\) holds, the, using Proposition~\ref{p2.4}, we obtain the membership \(\bar{B}_1 \in \bar{\BB}_X^0\) contrary to \eqref{t3.11:e6}. Hence, the equality \(\diam \bar{B}_1 = 0\) holds. The last equality implies the existence of a point \(c_1 \in X\) such that \(\bar{B}_1 = \{c_1\}\). If \(c_1\) is an isolated point in the space \((X, d)\), then there exists \(r_1 > 0\) such that \(\bar{B}_1 = \bar{B}_{r_1}(c_1)\). Therefore, \(\bar{B}_1\) belongs to \(\bar{\BB}_X^0\) despite \eqref{t3.11:e6}. Consequently, \(\bar{B}_1 = \{c_1\}\) holds for some \(c_1 \in \operatorname{acc}(X)\). Hence, there is a sequence \((x_n)_{n \in \mathbb{N}}\) of distinct points of \(X\) such that
\begin{equation}\label{t3.11:e7}
\lim_{n \to \infty} d(c_1, x_n) = 0.
\end{equation}
By Corollary~\ref{c3.7}, the set
\[
\hat{X} := \bigl\{\{x\} \colon x\in X\bigr\}
\]
is an isometric copy of \((X, d)\) in \((\bar{\BB}_X, d_H)\). Hence, \eqref{t3.11:e7} gives us
\[
\lim_{n \to \infty} d_H(\{c_1\}, \{x_n\}) = 0.
\]
So we have \(\bar{B}_1 \in \operatorname{acc}(\bar{\BB}_X)\). Inclusion~\eqref{t3.11:e3} follows.

Let us prove equalities~\eqref{t3.11:e1}. It follows from~\eqref{t3.11:e2.1} and \eqref{t3.11:e2.2} that \eqref{t3.11:e1} holds if and only if
\begin{equation}\label{t3.11:e7.1}
\bar{\BB}_X^0 = \operatorname{iso} (\bar{\BB}_X).
\end{equation}
Moreover, inclusions~\eqref{t3.11:e2.1} and \eqref{t3.11:e2.2} gives us the inclusion
\[
\bar{\BB}_X^0 \subseteq \operatorname{iso} (\bar{\BB}_X).
\]
The last inclusion implies that \eqref{t3.11:e7.1} is false if and only if there is
\begin{equation}\label{t3.11:e8}
\bar{B}_0 \in \operatorname{iso} (\bar{\BB}_X) \setminus \bar{\BB}_X^0.
\end{equation}
It follows from~\eqref{t3.11:e8} that \(\bar{B}_0 \in \bar{\BB}_X \setminus \bar{\BB}_X^0\). Hence, we have
\begin{equation}\label{t3.11:e9}
\bar{B}_0 \in \operatorname{acc}(\bar{\BB}_X)
\end{equation}
by~\eqref{t3.11:e3}. Now using~\eqref{t3.11:e8} and \eqref{t3.11:e9} we obtain
\[
\bar{B}_0 \in \operatorname{iso} (\bar{\BB}_X) \cap \operatorname{acc}(\bar{\BB}_X),
\]
that contradicts
\[
\operatorname{iso} (\bar{\BB}_X) \cap \operatorname{acc}(\bar{\BB}_X) = \varnothing.
\]
Thus, equality~\eqref{t3.11:e7.1} is fulfilled. Equality \eqref{t3.11:e1} follows.
\end{proof}

\begin{corollary}\label{c3.18}
The equality
\begin{equation}\label{c3.18:e1}
\operatorname{acc}(\bar{\BB}_X) = \bigl\{\{x\} \colon x \in \operatorname{acc}(X)\bigr\}
\end{equation}
holds for every ultrametric space \((X, d)\).
\end{corollary}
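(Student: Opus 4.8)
The plan is to obtain \eqref{c3.18:e1} from Theorem~\ref{t3.11} by passing to complements. First I would recall that, applying equalities \eqref{s1:e5} and \eqref{s1:e6} with $S = Y = \bar{\BB}_X$ (a space is always dense in itself), the sets $\operatorname{iso}(\bar{\BB}_X)$ and $\operatorname{acc}(\bar{\BB}_X)$ are disjoint and their union is all of $\bar{\BB}_X$, so that
\[
\operatorname{acc}(\bar{\BB}_X) = \bar{\BB}_X \setminus \operatorname{iso}(\bar{\BB}_X).
\]
Since the last equality in \eqref{t3.11:e1} gives $\operatorname{iso}(\bar{\BB}_X) = \bar{\BB}_X^0$, this reduces the problem to showing that
\[
\bar{\BB}_X \setminus \bar{\BB}_X^0 = \bigl\{\{x\} \colon x \in \operatorname{acc}(X)\bigr\}.
\]

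Second, I would verify this last equality directly from the definitions. If $\bar{B} \in \bar{\BB}_X \setminus \bar{\BB}_X^0$, then $\diam \bar{B} = 0$, for otherwise Proposition~\ref{p2.4} would force $\bar{B} \in \bar{\BB}_X^0$; hence $\bar{B} = \{c\}$ for some $c \in X$, and moreover $c \notin \operatorname{iso}(X)$, since an isolated $c$ would satisfy $\{c\} = \bar{B}_{r}(c)$ for some $r > 0$ and then $\bar{B} \in \bar{\BB}_X^0$. By \eqref{s1:e5} and \eqref{s1:e6} applied in $(X, d)$ this means $c \in \operatorname{acc}(X)$. Conversely, if $c \in \operatorname{acc}(X)$, then $\{c\}$ is a closed ball of radius $0$ which cannot be written as $\bar{B}_r(c)$ with $r > 0$ (such an $r$ would make $c$ isolated), so $\{c\} \in \bar{\BB}_X \setminus \bar{\BB}_X^0$. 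Combining the two directions with the previous paragraph yields \eqref{c3.18:e1}.

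There is essentially no obstacle here: the argument is pure bookkeeping with the definitions of $\operatorname{iso}$, $\operatorname{acc}$, and $\bar{\BB}_X^0$. The only step that requires a moment of care is the identification of $\bar{\BB}_X \setminus \bar{\BB}_X^0$ with the one-point balls centered at accumulation points, and this computation has in fact already been carried out inside the proof of inclusion \eqref{t3.11:e3} in Theorem~\ref{t3.11}; so in the write-up one may either reproduce it briefly as above or simply refer back to that part of the proof.
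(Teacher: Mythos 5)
Your proposal is correct and takes essentially the same route as the paper: both obtain \(\operatorname{acc}(\bar{\BB}_X) = \bar{\BB}_X \setminus \operatorname{iso}(\bar{\BB}_X)\) from \eqref{s1:e5}--\eqref{s1:e6} and then apply Theorem~\ref{t3.11}. The only cosmetic difference is that you use the form \(\operatorname{iso}(\bar{\BB}_X) = \bar{\BB}_X^0\) and re-verify that \(\bar{\BB}_X \setminus \bar{\BB}_X^0\) consists of the singletons \(\{x\}\) with \(x \in \operatorname{acc}(X)\) (as you note, this is already contained in the proof of inclusion~\eqref{t3.11:e3}), whereas the paper uses the middle expression in \eqref{t3.11:e1} and a three-way disjoint decomposition of \(\bar{\BB}_X\).
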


\begin{proof}
Equalities~\eqref{s1:e5} and \eqref{s1:e6} with \((Y, \delta) = (\bar{\BB}_X, d_H)\) and \(S = \bar{\BB}_X\) give us
\begin{equation}\label{c3.18:e2}
\bar{\BB}_X = \operatorname{acc}(\bar{\BB}_X) \cup \operatorname{iso}(\bar{\BB}_X)
\end{equation}
and
\begin{equation}\label{c3.18:e3}
\operatorname{acc}(\bar{\BB}_X) \cap \operatorname{iso}(\bar{\BB}_X) = \varnothing.
\end{equation}
By~\eqref{t3.11:e1} we have
\begin{equation}\label{c3.18:e4}
\operatorname{iso}(\bar{\BB}_X) = \{\bar{B} \in \bar{\BB}_X \colon \diam \bar{B} > 0\} \cup \bigl\{\{x\} \colon x \in \operatorname{iso}(X)\bigr\}.
\end{equation}
Moreover, it follows from the definition of the set \(\bar{\BB}_X\) that
\begin{equation}\label{c3.18:e5}
\bar{\BB}_X = \{\bar{B} \in \bar{\BB}_X \colon \diam \bar{B} > 0\} \cup \hat{X},
\end{equation}
where
\[
\hat{X} = \bigl\{\{x\} \colon x \in X\bigr\}.
\]
Now using the equality
\[
\hat{X} = \bigl\{\{x\} \colon x \in \operatorname{acc}(X)\bigr\} \cup \bigl\{\{x\} \colon x \in \operatorname{iso}(X)\bigr\},
\]
we can rewrite~\eqref{c3.18:e5} as
\begin{equation}\label{c3.18:e6}
\bar{\BB}_X = \{\bar{B} \in \bar{\BB}_X \colon \diam \bar{B} > 0\} \cup \bigl\{\{x\} \colon x \in \operatorname{acc}(X)\bigr\} \cup \bigl\{\{x\} \colon x \in \operatorname{iso}(X)\bigr\}.
\end{equation}
Since all sets in union~\eqref{c3.18:e6} are disjoint, equality~\eqref{c3.18:e1} follows from~\eqref{c3.18:e2}---\eqref{c3.18:e4}.
\end{proof}

\begin{proposition}\label{p3.21}
Let \((X, d)\) be an ultrametric space. Then the set \(\bar{\BB}_X^0\) is an unique dense discrete subset of the ultrametric space \((\bar{\BB}_X, d_H)\).
\end{proposition}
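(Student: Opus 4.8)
The plan is to check that \(\bar{\BB}_X^0\) is simultaneously a \emph{discrete} and a \emph{dense} subset of \((\bar{\BB}_X, d_H)\), and then to invoke Corollary~\ref{c1.13}, which says that dense discrete subsets of a metric space coincide, to deduce the uniqueness. Both the discreteness and the density of \(\bar{\BB}_X^0\) will be read off from the identity \(\bar{\BB}_X^0 = \operatorname{iso}(\bar{\BB}_X)\) established in Theorem~\ref{t3.11}, together with the description \(\operatorname{acc}(\bar{\BB}_X) = \bigl\{\{x\} \colon x \in \operatorname{acc}(X)\bigr\}\) from Corollary~\ref{c3.18}.

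For discreteness I would argue as follows. By Theorem~\ref{t3.11} every element of \(\bar{\BB}_X^0\) is an isolated point of the whole space \((\bar{\BB}_X, d_H)\); since an isolated point of a metric space is, a fortiori, an isolated point of every subset that contains it, we obtain \(\bar{\BB}_X^0 \subseteq \operatorname{iso}_{\bar{\BB}_X}(\bar{\BB}_X^0)\). The reverse inclusion holds by the very definition of isolated points, so \(\bar{\BB}_X^0 = \operatorname{iso}_{\bar{\BB}_X}(\bar{\BB}_X^0)\), i.e.\ \(\bar{\BB}_X^0\) is discrete in the sense of Definition~\ref{d1.23}.

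For density, let \(\bar{B} \in \bar{\BB}_X\) be arbitrary; I must produce a sequence in \(\bar{\BB}_X^0\) converging to \(\bar{B}\) in \(d_H\). If \(\bar{B} \in \bar{\BB}_X^0\), the constant sequence suffices. Otherwise \(\bar{B} \in \bar{\BB}_X \setminus \bar{\BB}_X^0\); since \(\operatorname{iso}(\bar{\BB}_X) \cup \operatorname{acc}(\bar{\BB}_X) = \bar{\BB}_X\) (equality~\eqref{s1:e6} with \(S = Y = \bar{\BB}_X\)) and \(\operatorname{iso}(\bar{\BB}_X) = \bar{\BB}_X^0\) by Theorem~\ref{t3.11}, we get \(\bar{B} \in \operatorname{acc}(\bar{\BB}_X)\), whence \(\bar{B} = \{x\}\) with \(x \in \operatorname{acc}(X)\) by Corollary~\ref{c3.18}. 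For each \(n \in \mathbb{N}\) the ball \(\bar{B}_{1/n}(x)\) has strictly positive radius, so \(\bar{B}_{1/n}(x) \in \bar{\BB}_X^0\), and since \(x \in \operatorname{acc}(X)\) this ball contains a point distinct from \(x\), so \(\bar{B}_{1/n}(x) \neq \{x\}\). Then Lemma~\ref{l6.12} together with equality~\eqref{e2.3} gives
\[
d_H\bigl(\{x\}, \bar{B}_{1/n}(x)\bigr) = \diam\bigl(\{x\} \cup \bar{B}_{1/n}(x)\bigr) = \diam \bar{B}_{1/n}(x) \leqslant \frac{1}{n},
\]
so \(d_H(\{x\}, \bar{B}_{1/n}(x)) \to 0\) as \(n \to \infty\), and \(\bigl(\bar{B}_{1/n}(x)\bigr)_{n \in \mathbb{N}}\) is the required sequence. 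Hence \(\bar{\BB}_X^0\) is dense in \((\bar{\BB}_X, d_H)\).

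Finally, having shown that \(\bar{\BB}_X^0\) is a dense discrete subset of \((\bar{\BB}_X, d_H)\), Corollary~\ref{c1.13} applied with \((Y, \delta) = (\bar{\BB}_X, d_H)\) forces any dense discrete subset to equal \(\bar{\BB}_X^0\), which is exactly the uniqueness claim. I do not expect a genuine obstacle here: the substance has been front-loaded into Theorem~\ref{t3.11} and Corollary~\ref{c3.18}, and the only point demanding a little care is confirming that the approximating balls \(\bar{B}_{1/n}(x)\) really belong to \(\bar{\BB}_X^0\) and converge to \(\{x\}\), which is precisely what the displayed computation records.
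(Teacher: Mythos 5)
Your proposal is correct and follows essentially the same route as the paper: discreteness of \(\bar{\BB}_X^0\) from the identity \(\bar{\BB}_X^0 = \operatorname{iso}(\bar{\BB}_X)\) of Theorem~\ref{t3.11}, density via the shrinking balls \(\bar{B}_{1/n}(x)\) together with Lemma~\ref{l6.12} and \eqref{e2.3}, and uniqueness from Corollary~\ref{c1.13}. The only cosmetic difference is that you identify the remaining balls as singletons \(\{x\}\) with \(x \in \operatorname{acc}(X)\) via Corollary~\ref{c3.18}, whereas the paper reduces to \(\diam \bar{B} = 0\) using Lemma~\ref{l2.6}; both are equally valid.
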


\begin{proof}
By Theorem~\ref{t3.11}, we have the equality
\begin{equation}\label{t3.14:e0}
\bar{\BB}_X^0 = \operatorname{iso}(\bar{\BB}_X).
\end{equation}
Hence, every \(\bar{B} \in \bar{\BB}_X^0\) is an isolated point of \((\bar{\BB}_X, d_H)\) and, consequently, it is an isolated point of \((\bar{\BB}_X^0, d_H)\). Thus, \(\bar{\BB}_X^0\) is a discrete subset of \((\bar{\BB}_X, d_H)\) by Definition~\ref{d1.23}.

Let us prove that \(\bar{\BB}_X^0\) is a dense subset of \((\bar{\BB}_X, d_H)\).

Let us consider an arbitrary \(\bar{B} = \bar{B}_r(c) \in \bar{\BB}_X\) and let \(r_1 := \diam \bar{B}\). Then, by Lemma~\ref{l2.6}, we have the equality \(\bar{B} = \bar{B}_{r_1}(c)\). Suppose first that the inequality \(r_1 > 0\) holds. Then, by definition of \(\bar{\BB}_X^0\), the membership relation \(\bar{B} \in \bar{\BB}_X^0\) is true. Consequently, the constant sequence \((\bar{B}_n)_{n \in \mathbb{N}}\) with \(\bar{B}_n = \bar{B}\) for all \(n \in \mathbb{N}\) converges to \(\bar{B}\) in \((\bar{\BB}_X, d_H)\).

If the equality \(r_1 = 0\) holds, then the set \(\bar{B}\) is a singleton
\begin{equation}\label{t3.14:e1}
\bar{B} = \{c\},
\end{equation}
where \(c\) is a point of \(X\). We claim that the sequence \((\bar{B}_m)_{m \in \mathbb{N}}\), with
\begin{equation}\label{t3.14:e2}
\bar{B}_m := \bar{B}_{1/m}(c)
\end{equation}
for every \(m \in \mathbb{N}\), converges to \(\bar{B}\) in \((\bar{\BB}_X, d_H)\).

Indeed, \eqref{t3.14:e1} and \eqref{t3.14:e2} imply \(\bar{B}_m \supseteq \bar{B}\) for every \(m \in \mathbb{N}\). Consequently, we have
\begin{equation}\label{t3.14:e3}
d_H(\bar{B}_m, \bar{B}) = \diam (\bar{B}_m \cup \bar{B}) = \diam \bar{B}_m
\end{equation}
by Lemma~\ref{l6.12}. Now using~\eqref{e2.3} with \(A = \bar{B}_m\) and \(p_0 = c\) we see that \eqref{t3.14:e2} and \eqref{t3.14:e3} imply
\[
d_H(\bar{B}_m, \bar{B}) \leqslant \frac{1}{m}
\]
for every \(m \in \mathbb{N}\). Thus, the sequence \((\bar{B}_m)_{m \in \mathbb{N}}\) converges to \(\bar{B}\) in \((\bar{\BB}_X, d_H)\). It is clear that \(\bar{B}_m\) belongs to \(\bar{\BB}_X^0\) for each \(m \in \mathbb{N}\). Hence, \(\bar{\BB}_X^0\) is dense in \(\bar{\BB}_X\) as required.

Let \(\mathbf{A}\) be a dense discrete subset of \((\bar{\BB}_X, d_H)\). To complete the proof we only note that the equality
\begin{equation*}
\mathbf{A} = \bar{\BB}_X^0.
\end{equation*}
holds by Corollary~\ref{c1.13}.
\end{proof}

\section{Discreteness, completeness, compactness and separability of balleans}
\label{sec5}

Let us start from the discreteness of the ultrametric ballean \(\bar{\BB}_X\).

\begin{theorem}\label{t4.1}
Let \((X, d)\) be an ultrametric space. Then \((X, d)\) is discrete if and only if \((\bar{\BB}_X, d_H)\) is discrete.
\end{theorem}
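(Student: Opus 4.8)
The plan is to deduce both implications from two facts already available: the identification of $\operatorname{iso}(\bar{\BB}_X)$ in Theorem~\ref{t3.11}, and the fact from Corollary~\ref{c3.7} that $\hat{X} = \{\{x\} \colon x \in X\}$ is an isometric copy of $(X,d)$ sitting inside $(\bar{\BB}_X, d_H)$. No intermediate construction is needed.

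First I would treat the implication ``$(\bar{\BB}_X, d_H)$ discrete $\Rightarrow$ $(X,d)$ discrete''. If every point of $\bar{\BB}_X$ is isolated in $(\bar{\BB}_X, d_H)$, then in particular each singleton $\{x\}$ is isolated in $\bar{\BB}_X$, so there is $r > 0$ with $\bar{\BB}_X \cap \bar{B}_r(\{x\}) = \{\{x\}\}$; a fortiori $\hat{X} \cap \bar{B}_r(\{x\}) = \{\{x\}\}$, whence $\{x\} \in \operatorname{iso}_{\bar{\BB}_X}(\hat{X})$. Thus $\hat{X}$ is a discrete subset of $(\bar{\BB}_X, d_H)$, and since discreteness is preserved by isometries and $(X,d)$ is isometric to $(\hat{X}, d_H)$ by Corollary~\ref{c3.7}, the space $(X,d)$ is discrete.

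For the converse, suppose $(X,d)$ is discrete, i.e. $\operatorname{iso}(X) = X$, so that $\bigl\{\{x\} \colon x \in \operatorname{iso}(X)\bigr\} = \hat{X}$. Every closed ball is either a ball of positive diameter or a singleton $\{x\}$, and conversely every singleton $\{x\}$ is the closed ball $\bar{B}_0(x)$; hence $\bar{\BB}_X = \{\bar{B} \in \bar{\BB}_X \colon \diam \bar{B} > 0\} \cup \hat{X}$. Substituting these two equalities into the description of $\operatorname{iso}(\bar{\BB}_X)$ furnished by Theorem~\ref{t3.11} gives $\operatorname{iso}(\bar{\BB}_X) = \{\bar{B} \in \bar{\BB}_X \colon \diam \bar{B} > 0\} \cup \hat{X} = \bar{\BB}_X$, which by Definition~\ref{d1.23} says exactly that $(\bar{\BB}_X, d_H)$ is discrete.

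I do not anticipate a real obstacle: the entire substance of the statement has been front-loaded into Theorem~\ref{t3.11} (the equality $\operatorname{iso}(\bar{\BB}_X) = \bar{\BB}_X^0$ together with the explicit form of $\bar{\BB}_X^0$) and Corollary~\ref{c3.7}. The only point demanding a line of care is the elementary decomposition $\bar{\BB}_X = \{\bar{B} \in \bar{\BB}_X \colon \diam \bar{B} > 0\} \cup \hat{X}$, which is what makes discreteness of $X$ precisely close the gap between $\bar{\BB}_X^0$ and $\bar{\BB}_X$.
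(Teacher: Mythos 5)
Your proof is correct. The substance is the same as the paper's, since everything rests on Theorem~\ref{t3.11}, but the packaging differs slightly: the paper deduces both implications at once from Corollary~\ref{c3.18}, rewriting discreteness of \((X,d)\) as \(\operatorname{acc}(X)=\varnothing\) and discreteness of \((\bar{\BB}_X,d_H)\) as \(\operatorname{acc}(\bar{\BB}_X)=\varnothing\) and then invoking the equality \(\operatorname{acc}(\bar{\BB}_X)=\bigl\{\{x\}\colon x\in\operatorname{acc}(X)\bigr\}\), whereas you prove the forward implication by combining Theorem~\ref{t3.11} with the decomposition \(\bar{\BB}_X=\{\bar{B}\colon\diam\bar{B}>0\}\cup\hat{X}\) (which is exactly the decomposition the paper uses inside the proof of Corollary~\ref{c3.18}), and the backward implication by the hereditary argument through the isometric copy \(\hat{X}\) from Corollary~\ref{c3.7} (the pattern the paper uses for Theorems~\ref{t4.3} and~\ref{t4.4}, but not here). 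The paper's route is marginally more economical and symmetric once Corollary~\ref{c3.18} is in hand; your route avoids citing that corollary at the cost of treating the two directions by different mechanisms, and the backward direction could equally well have been read off from Theorem~\ref{t3.11} directly (a singleton \(\{x\}\) isolated in \(\bar{\BB}_X\) lies in \(\bar{\BB}_X^0\), so \(\{x\}=\bar{B}_r(x)\) for some \(r>0\), making \(x\) isolated in \(X\)). No gaps.
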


\begin{proof}
By Definition~\ref{d1.23}, \((X, d)\) is discrete if and only if
\begin{equation}\label{t4.1:e1}
X = \operatorname{iso}(X).
\end{equation}
Analogously, \((\bar{\BB}_X, d_H)\) is discrete if and only if
\begin{equation}\label{t4.1:e2}
\bar{\BB}_X = \operatorname{iso}(\bar{\BB}_X).
\end{equation}
Using equalities~\eqref{s1:e5} and \eqref{s1:e6} with \((Y, \delta) = (X, d)\), \(S = X\) and, respectively, with \((Y, \delta) = (\bar{\BB}_X, d_H)\), \(S = \bar{\BB}_X\), we can rewrite~\eqref{t4.1:e1} as
\begin{equation}\label{t4.1:e3}
\operatorname{acc}(X) = \varnothing
\end{equation}
and, respectively, \eqref{t4.1:e2} as
\begin{equation}\label{t4.1:e4}
\operatorname{acc}(\bar{\BB}_X) = \varnothing.
\end{equation}
Equality~\eqref{c3.18:e1} implies the logical equivalence \(\eqref{t4.1:e3} \Leftrightarrow \eqref{t4.1:e4}\), that give us the equivalence \(\eqref{t4.1:e1} \Leftrightarrow \eqref{t4.1:e2}\), as required.
\end{proof}

\begin{example}\label{ex3.20}
Let \(X\) be a nonempty subset of \(\mathbb{R}^+\) and \(d\) be the restriction on \(X \times X\) of the Delhomm\'{e}---Laflamme---Pouzet---Sauer ultrametric \(d^+\). Then \((\bar{\BB}_X, d_H)\) is discrete if and only if \(0 \notin \operatorname{acc}(X)\).
\end{example}

The next result can be considered as a specialization of Theorem~\ref{t4.1}.

\begin{theorem}\label{t4.3}
Let \((X, d)\) be an ultrametric space. Then \((X, d)\) is locally finite if and only if \((\bar{\BB}_X, d_H)\) is locally finite.
\end{theorem}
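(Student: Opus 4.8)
The plan is to prove the two implications separately, using Corollary~\ref{c3.7} for the implication ``$(\bar{\BB}_X, d_H)$ locally finite $\Rightarrow$ $(X,d)$ locally finite'' and Proposition~\ref{p3.7} for the converse.

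For the first implication I would first record the elementary fact that local finiteness passes to subspaces: if $Y$ is a subspace of a metric space $Z$ and $A \subseteq Y$ is bounded, then $A$ is bounded in $Z$ as well (the metric on $Y$ being a restriction), hence finite whenever $Z$ is locally finite. Applying this with $Z = (\bar{\BB}_X, d_H)$ and $Y = \hat{X} := \bigl\{\{x\} \colon x \in X\bigr\}$, which is a subspace of $\bar{\BB}_X$ by Corollary~\ref{c3.7}, we get that $(\hat{X}, d_H)$ is locally finite; since $(\hat{X}, d_H)$ is isometric to $(X, d)$ by the same corollary, $(X, d)$ is locally finite.

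For the converse, assume $(X,d)$ is locally finite and let $\mathbf{A} \subseteq \bar{\BB}_X$ be a bounded subset of $(\bar{\BB}_X, d_H)$; the goal is to show $\mathbf{A}$ is finite. If $\mathbf{A}$ has at most one element this is immediate, so assume $\mathbf{A}$ has at least two elements and put $A := \bigcup_{\bar{B} \in \mathbf{A}} \bar{B}$. By Proposition~\ref{p3.7} we have $\diam A = \diam \mathbf{A} < \infty$, so $A$ is a bounded subset of $(X,d)$ and hence finite by Definition~\ref{d1.26}. Each $\bar{B} \in \mathbf{A}$ satisfies $\bar{B} \subseteq A$, so $\mathbf{A}$ is contained in the power set of the finite set $A$; since distinct balls are distinct as subsets of $A$, the set $\mathbf{A}$ is finite.

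The argument is essentially routine and I do not expect a genuine obstacle: the entire content of the converse direction is carried by Proposition~\ref{p3.7}, which converts boundedness of a family of balls into boundedness of its union. The only points requiring a little care are to split off the one-element (and empty) cases of $\mathbf{A}$, since Proposition~\ref{p3.7} is stated for families with at least two elements, and to note that the balls in $\mathbf{A}$, being genuinely different elements of $\bar{\BB}_X$, remain distinct subsets of the finite set $A$.
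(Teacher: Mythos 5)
Your proof is correct and takes essentially the same route as the paper: Corollary~\ref{c3.7} handles the implication from local finiteness of \((\bar{\BB}_X, d_H)\) to that of \((X,d)\), and Proposition~\ref{p3.7} carries the converse by converting boundedness of \(\mathbf{A}\) into boundedness (hence finiteness) of the union \(A\). The only inessential difference is that you bound \(|\mathbf{A}|\) by the number of subsets of \(A\) directly, while the paper first passes to the smallest ball \(B^*(A)\) and uses \(|\mathbf{A}| \leqslant 2^{|B^*|}\); both arguments are the same in substance.
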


\begin{proof}
Let \((X, d)\) be locally finite. We claim that \((\bar{\BB}_X, d_H)\) is also a locally finite ultrametric space.

Let \(\mathbf{A}\) be an arbitrary bounded subset of \((\bar{\BB}_X, d_H)\). To prove our claim, we must show that \(\mathbf{A}\) is finite,
\begin{equation}\label{t4.3:e1}
|\mathbf{A}| < \infty.
\end{equation}
It is enough to consider the case
\begin{equation}\label{t4.3:e2}
|\mathbf{A}| \geqslant 2.
\end{equation}
Let \(A\) be a subset of \(X\) defined by
\begin{equation}\label{t4.3:e3}
A := \bigcup_{\bar{B} \in \mathbf{A}} \bar{B}.
\end{equation}
Now, since~\eqref{t4.3:e2} holds, we can use Proposition~\ref{p3.7} to obtain
\begin{equation}\label{t4.3:e4}
\diam \mathbf{A} = \diam A = \diam B^*,
\end{equation}
where \(\diam \mathbf{A}\) is the diameter of the set \(\mathbf{A}\) in \((\bar{\BB}_X, d_H)\), \(\diam A\) is the the diameter of the set \(A\) in \((X, d)\) and \(B^* = B^*(A)\) is the smallest closed ball containing the set~\(A\).

It follows from~\eqref{t4.3:e3} that every \(\bar{B} \in \mathbf{A}\) is a subset of the ball \(B^*\). Since \(\mathbf{A}\) is bounded subset of \((\bar{\BB}_X, d_H)\), the ball \(B^*\) is a bounded subset of \((X, d)\) by~\eqref{t4.3:e4}. Since \((X, d)\) is locally finite and \(B^*\) is bounded, we have the inequality
\[
|B^*| < \infty.
\]
Moreover, the number of subsets of \(B^*\) is equal to \(2^{|B^*|}\). Thus, we have
\[
|\mathbf{A}| \leqslant 2^{|B^*|} < \infty.
\]
Inequality~\eqref{t4.3:e1} follows.

To complete the proof it is enough to note that every metric space which is isometric to a subspace of locally finite space is also locally finite. Hence, by Corollary~\ref{c3.7}, \((X, d)\) is locally finite, if \((\bar{\BB}_X, d_H)\) is locally finite.
\end{proof}

The following theorem is a refinement of Theorem~\ref{t4.1} for the case of metrically discrete \((X, d)\).

\begin{theorem}\label{t4.4}
Let \((X, d)\) be an ultrametric space. Then \((X, d)\) is metrically discrete if and only if \((\bar{\BB}_X, d_H)\) is metrically discrete.
\end{theorem}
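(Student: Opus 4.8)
The plan is to prove both implications by unwinding the definition of metrical discreteness and exploiting the geometric formula for the Hausdorff distance given in Lemma~\ref{l6.12}, namely \(d_H(\bar{B}_1, \bar{B}_2) = \diam(\bar{B}_1 \cup \bar{B}_2)\) for distinct balls. For the implication ``\((\bar{\BB}_X, d_H)\) metrically discrete \(\Rightarrow\) \((X,d)\) metrically discrete'' I would argue exactly as in the last paragraph of the proof of Theorem~\ref{t4.3}: by Corollary~\ref{c3.7}, the space \((X,d)\) is isometric to the subspace \((\hat{X}, d_H)\) of \((\bar{\BB}_X, d_H)\), and any subspace of a metrically discrete space is metrically discrete (the same threshold \(t>0\) works). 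So that direction is immediate.

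The substantive direction is ``\((X,d)\) metrically discrete \(\Rightarrow\) \((\bar{\BB}_X, d_H)\) metrically discrete.'' Suppose there is \(t>0\) with \(d(x,y) \geqslant t\) for all distinct \(x,y\in X\). Take distinct \(\bar{B}_1, \bar{B}_2 \in \bar{\BB}_X\); I want \(d_H(\bar{B}_1,\bar{B}_2) \geqslant t\). By Lemma~\ref{l6.12}, \(d_H(\bar{B}_1,\bar{B}_2) = \diam(\bar{B}_1 \cup \bar{B}_2)\). Since \(\bar{B}_1 \neq \bar{B}_2\), the union \(\bar{B}_1 \cup \bar{B}_2\) contains at least two distinct points of \(X\): indeed, if both balls were singletons they would be distinct singletons \(\{x\}\neq\{y\}\), and if one of them, say \(\bar{B}_2\), has \(\diam \bar{B}_2 > 0\), then \(\bar{B}_2\) alone contains two distinct points. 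In either case there exist distinct \(p, q \in \bar{B}_1 \cup \bar{B}_2\), whence \(\diam(\bar{B}_1 \cup \bar{B}_2) \geqslant d(p,q) \geqslant t\). Therefore \(d_H(\bar{B}_1,\bar{B}_2) \geqslant t\) for all distinct \(\bar{B}_1, \bar{B}_2\), which is precisely metrical discreteness of \((\bar{\BB}_X, d_H)\).

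I do not anticipate a serious obstacle here; the only point requiring a little care is the case analysis showing \(\bar{B}_1 \cup \bar{B}_2\) has at least two points whenever \(\bar{B}_1 \neq \bar{B}_2\) — one must rule out the degenerate possibility that the union is a single point, which is straightforward since distinct balls cannot both equal the same singleton. (Note that metrical discreteness of \((X,d)\) forces \(X\) to be discrete, so in fact \(\bar{\BB}_X = \bar{\BB}_X^0 \cup \hat X\) and every singleton ball lies in \(\bar{\BB}_X^0\), but this structural remark is not needed for the argument.) Once the two-point observation is in place, the bound \(\diam(\bar{B}_1 \cup \bar{B}_2) \geqslant t\) is automatic and the proof closes.
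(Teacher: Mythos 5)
Your proposal is correct and follows essentially the same route as the paper: the forward direction uses Lemma~\ref{l6.12} to reduce the Hausdorff distance to \(\diam(\bar{B}_1 \cup \bar{B}_2)\) and then bounds it below by the distance between two distinct points of the union (the paper picks one point in each ball, you argue via a trivial case analysis — same idea), and the converse direction is the identical subspace argument via Corollary~\ref{c3.7}.
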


\begin{proof}
Let \((X, d)\) be metrically discrete. Then there is \(k_0 > 0\) such that
\begin{equation}\label{t4.4:e1}
d(x, y) \geqslant k_0
\end{equation}
for all distinct \(x\), \(y \in X\). To prove that \((\bar{\BB}_X, d_H)\) is metrically discrete it is enough to show that the inequality
\begin{equation}\label{t4.4:e2}
d_H(\bar{B}_1, \bar{B}_2) \geqslant k_0
\end{equation}
holds for all distinct \(\bar{B}_1\), \(\bar{B}_2 \in \bar{\BB}_X\).

Let us consider arbitrary distinct balls \(\bar{B}_1\), \(\bar{B}_2 \in \bar{\BB}_X\). By Lemma~\ref{l6.12}, we have the equality
\begin{equation}\label{t4.4:e3}
d_H(\bar{B}_1, \bar{B}_2) = \diam (\bar{B}_1\cup \bar{B}_2).
\end{equation}
It follows from \(\bar{B}_1 \neq \bar{B}_2\) that we can find \(x_1 \in \bar{B}_1\) and \(x_2 \in \bar{B}_2\) such that \(x_1 \neq x_2\). Inequality~\eqref{t4.4:e1} with \(x = x_1\) and \(y = x_2\) and equality~\eqref{t4.4:e3} give inequality~\eqref{t4.4:e2},
\begin{align*}
d_H(\bar{B}_1, \bar{B}_2) & = \diam (\bar{B}_1\cup \bar{B}_2) \\
& = \sup\{d(x, y) \colon x, y \in \bar{B}_1\cup \bar{B}_2\} \\
& \geqslant d(x_1, x_2) \geqslant k_0.
\end{align*}

Let \((\bar{\BB}_X, d_H)\) be metrically discrete. To complete the proof it suffices to note that that any subspace of metrically discrete space is also metrically discrete. Hence, \((\hat{X}, d_H)\) is metrically discrete by Corollary~\ref{c3.7}, where
\[
\hat{X} := \bigl\{\{x\} \colon x\in X\bigr\}.
\]
Thus, \((X, d)\) is metrically discrete as an isometric copy of \((\hat{X}, d_H)\).
\end{proof}

\begin{example}\label{ex4.5}
Let \(X\) be a subset of \(\mathbb{R}^{+}\) such that \(0\in X\) and let \(d\) be the restriction of the Delhomm\'{e}---Laflamme---Pouzet---Sauer ultrametric \(d^+\) on \(X \times X\). Then the space \((\bar{\BB}_X, d_H)\) is discrete if and only if this space is metrically discrete.
\end{example}

\begin{theorem}\label{t4.5}
Let \((X, d)\) be a metric space. Then the following statements are equivalent:
\begin{enumerate}
\item \label{t4.5:s1} The metric \(d \colon X \times X \to \mathbb{R}^+\) is equidistant.
\item \label{t4.5:s2} The metric \(d_H \colon \bar{\BB}_X \times \bar{\BB}_X \to \mathbb{R}^+\) is equidistant.
\item \label{t4.5:s3} The equality
\begin{equation}\label{t4.5:e1}
\bar{\BB}_X = \hat{X} \cup \{X\}
\end{equation}
holds, where
\begin{equation}\label{t4.5:e1.1}
\hat{X} := \bigl\{\{x\} \colon x\in X\bigr\}
\end{equation}
and \(\{X\}\) is the one-point set whose only element is the set \(X\).
\end{enumerate}
\end{theorem}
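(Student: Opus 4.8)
The plan is to establish the cycle $\ref{t4.5:s1}\Rightarrow\ref{t4.5:s3}\Rightarrow\ref{t4.5:s1}$ together with $\ref{t4.5:s1}\Rightarrow\ref{t4.5:s2}\Rightarrow\ref{t4.5:s1}$, which gives the full equivalence. Since all three statements hold trivially when $|X|\leqslant 1$ (the equidistance requirements are then vacuous and $\bar{\BB}_X=\hat{X}=\{X\}$), I will assume $|X|\geqslant 2$ throughout. For $\ref{t4.5:s1}\Rightarrow\ref{t4.5:s3}$ I would argue as follows: if $d(x,y)=t$ for all distinct $x,y\in X$ with $t>0$, then a point $x\neq c$ belongs to a ball $\bar{B}_r(c)$ exactly when $t=d(c,x)\leqslant r$, so $\bar{B}_r(c)=\{c\}$ for $r<t$ and $\bar{B}_r(c)=X$ for $r\geqslant t$; thus every closed ball is a singleton or equals $X$, while conversely $\{c\}=\bar{B}_0(c)$ and $X=\bar{B}_t(c)$ are closed balls, giving \eqref{t4.5:e1}.

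The implication $\ref{t4.5:s3}\Rightarrow\ref{t4.5:s1}$ is the one place where an actual idea is needed, and it is the step I expect to be the crux. Assuming \eqref{t4.5:e1}, fix $c\in X$; I would first show that $d(c,\cdot)$ is constant on $X\setminus\{c\}$. Indeed, if $p,q\in X\setminus\{c\}$ satisfied $d(c,p)<d(c,q)$, then $\bar{B}_{d(c,p)}(c)$ would contain the two distinct points $c$ and $p$, hence would not be a singleton, hence would equal $X$ by \eqref{t4.5:e1}; but $d(c,q)>d(c,p)$ forces $q\notin\bar{B}_{d(c,p)}(c)$, a contradiction. So there is $t_c>0$ with $d(c,p)=t_c$ for all $p\in X\setminus\{c\}$, and then for distinct $c,c'\in X$ we get $t_c=d(c,c')=d(c',c)=t_{c'}$; hence all the $t_c$ equal a common value $t>0$ and $d$ is equidistant.

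For $\ref{t4.5:s1}\Rightarrow\ref{t4.5:s2}$ I would note that an equidistant metric is an ultrametric (with $d(x,y)\equiv t$ off the diagonal, the strong triangle inequality is $t\leqslant\max\{t,t\}$), so Lemma~\ref{l6.12} applies and $d_H(\bar{B}_1,\bar{B}_2)=\diam(\bar{B}_1\cup\bar{B}_2)$ for all distinct $\bar{B}_1,\bar{B}_2\in\bar{\BB}_X$. By the implication $\ref{t4.5:s1}\Rightarrow\ref{t4.5:s3}$ already proved, each $\bar{B}_i$ is a singleton or is $X$, and running through the two possibilities (two distinct singletons; a singleton together with $X$) while using \eqref{e2.3} one gets $\diam(\bar{B}_1\cup\bar{B}_2)=t$ in every case, where $t=\diam X$. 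Hence $d_H$ is equidistant with constant $t$, which is $\ref{t4.5:s2}$.

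Finally, for $\ref{t4.5:s2}\Rightarrow\ref{t4.5:s1}$, the restriction of the equidistant metric $d_H$ to $\hat{X}\times\hat{X}$ is again equidistant, and by Corollary~\ref{c3.7} the space $(\hat{X},d_H)$ is isometric to $(X,d)$; since an isometric copy of an equidistant metric space is clearly equidistant, $d$ is equidistant. Apart from $\ref{t4.5:s3}\Rightarrow\ref{t4.5:s1}$, every step is a one-line computation; the substance of the proof is the observation in that step that \eqref{t4.5:e1} forces each point of $X$ to be equidistant from all the others, followed by the routine matching of the resulting constants across $X$.
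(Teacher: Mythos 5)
Your proposal is correct and follows essentially the same route as the paper: the same four implications, with \ref{t4.5:s2}$\Rightarrow$\ref{t4.5:s1} via the isometric copy $\hat{X}$ from Corollary~\ref{c3.7}, \ref{t4.5:s1}$\Rightarrow$\ref{t4.5:s3} by classifying the balls, and \ref{t4.5:s3}$\Rightarrow$\ref{t4.5:s1} by a contradiction forcing all distances to coincide. The only cosmetic differences are that you route \ref{t4.5:s1}$\Rightarrow$\ref{t4.5:s2} through Lemma~\ref{l6.12} (after noting an equidistant metric is an ultrametric) where the paper computes $d_H(A,B)=t_1$ directly from \eqref{e6.11} for all distinct closed bounded sets, and your \ref{t4.5:s3}$\Rightarrow$\ref{t4.5:s1} argument handles $|X|\geqslant 2$ uniformly instead of splitting off $|X|\leqslant 2$ and counting positive-diameter balls.
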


\begin{proof}
\(\ref{t4.5:s1} \Rightarrow \ref{t4.5:s2}\). Let \(d \colon X \times X \to \mathbb{R}^+\) be equidistant. Then there is \(t_1 > 0\) such that
\begin{equation}\label{t4.5:e2}
d(x, y) = t_1
\end{equation}
for all distinct \(x\), \(y \in X\). Hence, every \(A \subseteq X\) is a closed bounded subset of \((X, d)\) and, for all distinct nonempty \(A\), \(B \subseteq X\), equalities~\eqref{e6.11} and \eqref{t4.5:e2} imply
\begin{equation}\label{t4.5:e3}
d_H(A, B) = t_1.
\end{equation}
Since \(\bar{\BB}_X\) is a subset of \(\mathfrak{M}_X\), we also have
\begin{equation}\label{t4.5:e4}
d_H(\bar{B}_1, \bar{B}_2) = t_1
\end{equation}
for all distinct \(\bar{B}_1\), \(\bar{B}_2 \in \bar{\BB}_X\). Thus, \(d_H \colon \bar{\BB}_X \times \bar{\BB}_X \to \mathbb{R}^+\) is equidistant.

\(\ref{t4.5:s2} \Rightarrow \ref{t4.5:s1}\). Let \(d_H \colon \bar{\BB}_X \times \bar{\BB}_X \to \mathbb{R}^+\) be equidistant. By Corollary~\ref{c3.7}, the space \((\hat{X}, d_H)\) is subspace \((\bar{\BB}_X, d_H)\). Hence,
\[
d_H \colon \hat{X} \times \hat{X} \to \mathbb{R}^{+}
\]
is equidistant as a restriction of \(d_H \colon \bar{\BB}_X \times \bar{\BB}_X \to \mathbb{R}^{+}\) on the set \(\hat{X} \times \hat{X}\). Moreover, \((X, d)\) and \((\hat{X}, d_H)\) are isometric by Corollary~\ref{c3.7}. Consequently, the metric \(d\) is also equidistant.

\(\ref{t4.5:s1} \Rightarrow \ref{t4.5:s3}\). Suppose that \(d\) is equidistant. Let us prove equality~\eqref{t4.5:e1}.

Let us first consider the case when \(X\) is a singleton,
\begin{equation}\label{t4.5:e5}
X = \{x_0\}.
\end{equation}
Then, using the definition of \(\bar{\BB}_X\) and equality~\eqref{t4.5:e5} we obtain the equality
\begin{equation}\label{t4.5:e6}
\bar{\BB}_X = \{\{x_0\}\}.
\end{equation}
Equalities~\eqref{t4.5:e1.1} and~\eqref{t4.5:e5} also give us the equality
\begin{equation}\label{t4.5:e7}
\hat{X} = \{\{x_0\}\}.
\end{equation}
From the definition of the set \(\{X\}\) and~\eqref{t4.5:e5} it follows that
\begin{equation}\label{t4.5:e8}
\{X\} = \{\{x_0\}\}.
\end{equation}
Equality~\eqref{t4.5:e1} follows from~\eqref{t4.5:e6}--\eqref{t4.5:e8},
\begin{equation}\label{t4.5:e9}
\hat{X} \cup \{X\} = \{\{x_0\}\} \cup \{\{x_0\}\} = \{\{x_0\}\} = \bar{\BB}_X.
\end{equation}

Let us consider the case when \(X\) contains at least two points, \(|X| \geqslant 2\). It directly follows from the definition of the closed balls that
\begin{equation}\label{t4.5:e10}
\bar{\BB}_X = \{\bar{B} \in \bar{\BB}_X \colon \diam \bar{B} > 0\} \cup \hat{X}.
\end{equation}
Hence, \eqref{t4.5:e1} holds if
\begin{equation}\label{t4.5:e11}
\{X\} = \{\bar{B} \in \bar{\BB}_X \colon \diam \bar{B} > 0\}.
\end{equation}
Since \(d\) is equidistant and \(|X| \geqslant 2\) holds, there exists the unique \(\bar{B} \in \bar{\BB}_X\) with \(\diam \bar{B} > 0\). Moreover, we have \(X \in \bar{\BB}_X\) and \(\diam X > 0\) for every bounded metric space \(X\) which contains at least two points. Thus, \eqref{t4.5:e11} holds.

Equality~\eqref{t4.5:e1} follows.

\(\ref{t4.5:s3} \Rightarrow \ref{t4.5:s1}\). Let equality~\eqref{t4.5:e1} hold. We must show that \(d\) is equidistant. If we have \(|X| \leqslant 2\), then \(d\) evidently is equidistant.

Let us consider the case \(|X| \geqslant 3\).

Let \(c_1\) be a point of \(X\) and \(x_1\), \(x_2\) be two arbitrary distinct points of \(X \setminus \{c_1\}\). We claim that
\begin{equation}\label{t4.5:e13}
d(c_1, x_1) = d(c_1, x_2).
\end{equation}
Indeed, if \eqref{t4.5:e13} does not hold and \(r_1 := d(c_1, x_1)\), \(r_2 := d(c_1, x_2)\), then the closed balls \(\bar{B}_{r_1}(c_1)\) and \(\bar{B}_{r_2}(c_1)\) are different,
\begin{equation}\label{t4.5:e14}
\bar{B}_{r_1}(c_1) \neq \bar{B}_{r_2}(c_1).
\end{equation}
Using~\eqref{t4.5:e14} we obtain the inequality
\begin{equation}\label{t4.5:e15}
|\{\bar{B} \in \bar{\BB}_X \colon \diam \bar{B} > 0\}| \geqslant 2,
\end{equation}
but from~\eqref{t4.5:e1} it follows that
\begin{equation}\label{t4.5:e16}
|\{\bar{B} \in \bar{\BB}_X \colon \diam \bar{B} > 0\}| = 1,
\end{equation}
which contradicts~\eqref{t4.5:e15}. Thus, \eqref{t4.5:e13} holds for all different \(x_1\), \(x_2 \in X \setminus \{c_1\}\).

Let \(r \in (0, \infty)\) satisfy
\begin{equation}\label{t4.5:e17}
d(c_1, x_1) = r = d(c_1, x_2).
\end{equation}
Since \(x_1\) and \(x_2\) are arbitrary distinct points of \(X \setminus \{c_1\}\), we only need to show that
\begin{equation}\label{t4.5:e18}
d(x_1, x_2) = r.
\end{equation}

Let us assume the contrary that \(d(x_1, x_2) \neq r\) and consider the balls \(\bar{B}_{r}(x_1)\) and \(\bar{B}_{r_3}(x_1)\) with \(r\) satisfying~\eqref{t4.5:e17} and
\begin{equation}\label{t4.5:e19}
r_3 := d(x_1, x_2).
\end{equation}
Then using \eqref{t4.5:e17} and \eqref{t4.5:e19} we obtain
\begin{equation}\label{t4.5:e20}
\bar{B}_{r}(x_1) \neq \bar{B}_{r_3}(x_1)
\end{equation}
and
\begin{equation}\label{t4.5:e21}
\diam \bar{B}_{r}(x_1) \neq 0 \neq \diam \bar{B}_{r_3}(x_1).
\end{equation}
Now, reasoning in the same way as in proving of equality~\eqref{t4.5:e17}, we can obtain that~\eqref{t4.5:e20} and \eqref{t4.5:e21} contradicts~\eqref{t4.5:e16}. Equality \eqref{t4.5:e18} follows. The metric \(d\) is equidistant as required.
\end{proof}

\begin{theorem}\label{t4.7}
Let \(d\) be an equidistant metric on a set \(X\). Then the following statements are equivalent:
\begin{enumerate}
\item \label{t4.7:s1} \(|X| = 1\) or \(X\) is an infinite set.
\item \label{t4.7:s2} The metric spaces \((X, d)\) and \((\bar{\BB}_X, d_H)\) are isometric.
\end{enumerate}
\end{theorem}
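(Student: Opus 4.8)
The plan is to reduce both implications to a cardinality count, using the structural description of \(\bar{\BB}_X\) for equidistant metrics provided by Theorem~\ref{t4.5}.

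First I would record the two facts I need. Since \(d\) is equidistant, say with constant \(t > 0\), Theorem~\ref{t4.5} gives \(\bar{\BB}_X = \hat{X} \cup \{X\}\), where \(\hat{X} = \bigl\{\{x\} \colon x \in X\bigr\}\), and it also tells us that \(d_H\) is equidistant on \(\bar{\BB}_X\). A short computation with formula~\eqref{e6.11} shows that the \(d_H\)-constant is again \(t\): two distinct singletons \(\{x\}\), \(\{y\}\) lie at \(d_H\)-distance \(d(x,y) = t\), and, when \(|X| \geqslant 2\), we have \(d_H(\{x\}, X) = \max\bigl\{0, \sup_{y \in X} d(x,y)\bigr\} = t\). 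The decisive remark is that \(X\) belongs to \(\hat{X}\) exactly when \(X\) is a singleton; hence, when \(|X| \geqslant 2\), the union \(\bar{\BB}_X = \hat{X} \cup \{X\}\) is disjoint and \(|\bar{\BB}_X| = |X| + 1\).

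For \(\ref{t4.7:s1} \Rightarrow \ref{t4.7:s2}\): if \(|X| = 1\), then both \((X, d)\) and \((\bar{\BB}_X, d_H)\) are one-point spaces and the unique bijection between them is an isometry. If \(X\) is infinite, then \(|\bar{\BB}_X| = |X| + 1 = |X|\), so there is a bijection \(\Phi \colon X \to \bar{\BB}_X\); since \(d\) and \(d_H\) are both equidistant with the same constant \(t\), the equality \(d(x, y) = d_H(\Phi(x), \Phi(y))\) holds for all \(x\), \(y \in X\) (both sides equal \(t\) when \(x \neq y\) and equal \(0\) when \(x = y\)), so \(\Phi\) is an isometry.

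For \(\ref{t4.7:s2} \Rightarrow \ref{t4.7:s1}\) I would argue by contraposition: if \(|X| \neq 1\) and \(X\) is finite, then \(|X| \geqslant 2\), whence \(|\bar{\BB}_X| = |X| + 1 > |X|\); since any isometry is in particular a bijection, \((X, d)\) and \((\bar{\BB}_X, d_H)\) cannot be isometric. I do not expect a genuine obstacle: the whole argument is elementary once Theorem~\ref{t4.5} is available, and the only step calling for a little care is confirming that \(d_H\) has the same equidistance constant \(t\) as \(d\) — which the direct evaluation of \(d_H(\{x\}, X)\) above settles — so that the bijection constructed in the infinite case is really distance-preserving.
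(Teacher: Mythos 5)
Your proposal is correct and follows essentially the same route as the paper: both reduce the question via Theorem~\ref{t4.5} to comparing \(|X|\) with \(|\bar{\BB}_X| = |\hat{X} \cup \{X\}|\), using that \(d\) and \(d_H\) are equidistant with the same constant so that any bijection is an isometry, and then split into the cases \(|X|=1\), \(X\) infinite, and \(X\) finite with \(|X| \geqslant 2\). The only (harmless) deviation is in justifying \(X \notin \hat{X}\) for \(|X| \geqslant 2\): you observe directly that a set with at least two elements is not a singleton, whereas the paper invokes the Axiom of Regularity — your version is, if anything, simpler.
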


\begin{proof}
It was shown in the proof of Theorem~\ref{t4.5} that
\[
d(x_1, x_2) = d_H(\bar{B}_1, \bar{B}_2)
\]
holds for all distinct \(x_1\), \(x_2 \in X\) and all distinct \(\bar{B}_1\), \(\bar{B}_2 \in \bar{\BB}_X\) (see equalities~\eqref{t4.5:e2} and \eqref{t4.5:e4}). Consequently, \((X, d)\) and \((\bar{\BB}_X, d_H)\) are isometric if and only if
\begin{equation}\label{t4.7:e1}
|X| = |\bar{\BB}_X|.
\end{equation}
If \(X\) is a singleton, then \(\bar{\BB}_X\) is also a singleton by equality~\eqref{t4.5:e6}. Hence, \eqref{t4.7:e1} is true if \(|X| = 1\).

Let us consider the case when \(X\) is an infinite set. It follows directly from~\eqref{t4.5:e1.1} that
\begin{equation}\label{t4.7:e2}
|X| = |\hat{X}|.
\end{equation}
Consequently, \(\hat{X}\) is also infinite, that implies
\begin{equation}\label{t4.7:e3}
|\hat{X} \cup \{X\}| = |\hat{X}|.
\end{equation}
Now using~\eqref{t4.5:e1} and equalities~\eqref{t4.7:e2}, \eqref{t4.7:e3} we obtain~\eqref{t4.7:e1}. Thus, the implication \(\ref{t4.7:s1} \Rightarrow \ref{t4.7:s2}\) is true.

Let \(X\) be finite and \(|X| \geqslant 2\) hold. The Axiom of Regularity in Zermelo---Fraenkel set theory implies that there is no set \(X\) such that \(X \in X\) (see, for example, page~63 in \cite{Jec2003ST}). Consequently, for given \(X\), there is no \(x \in X\) such that \(\{x\} = \{X\}\). The last statement, equality~\eqref{t4.5:e1} and~\eqref{t4.7:e2} give us
\[
|\bar{\BB}_X| = |\hat{X}| + |\{X\}| = |\hat{X}| + 1 = |X| + 1 > |X|.
\]
Thus, \eqref{t4.7:e1} does not hold if statement \ref{t4.7:s1} us false. Hence, the implication \(\ref{t4.7:s2} \Rightarrow \ref{t4.7:s1}\) is also true.
\end{proof}

The following lemma will be used to describe the condition under which the ballean \(\bar{\BB}_X\) is complete.

\begin{lemma}\label{l3.7}
Let \((X, d)\) be an ultrametric space. Then the set \(\bar{\BB}_X\) is a closed subset of the space \((\mathfrak{M}_X, d_H)\).
\end{lemma}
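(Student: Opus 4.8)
The plan is to take an arbitrary sequence $(\bar{B}_n)_{n \in \mathbb{N}}$ in $\bar{\BB}_X$ that converges in $(\mathfrak{M}_X, d_H)$ to some $C \in \mathfrak{M}_X$ and to prove that $C \in \bar{\BB}_X$; by Proposition~\ref{p3.2} this suffices. First I would dispose of the case in which some subsequence $(\bar{B}_{n_k})_{k \in \mathbb{N}}$ is constant, say $\bar{B}_{n_k} = \bar{B}$ for all $k \in \mathbb{N}$: then $d_H(\bar{B}, C) = \lim_{k \to \infty} d_H(\bar{B}_{n_k}, C) = 0$, so $C = \bar{B} \in \bar{\BB}_X$ and we are done.

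In the remaining case the set $\{\bar{B}_n \colon n \in \mathbb{N}\}$ is infinite, so after passing to a subsequence (which leaves the limit $C$ unchanged) I may assume that the balls $\bar{B}_n$ are pairwise distinct. The key step is then to show that $\diam \bar{B}_n \to 0$. Since $(\bar{B}_n)_{n \in \mathbb{N}}$ converges to $C$, the triangle inequality for $d_H$ gives $d_H(\bar{B}_n, \bar{B}_{n+1}) \leqslant d_H(\bar{B}_n, C) + d_H(C, \bar{B}_{n+1}) \to 0$. As $\bar{B}_n \neq \bar{B}_{n+1}$, Lemma~\ref{l6.12} yields $\diam \bar{B}_n \leqslant \diam(\bar{B}_n \cup \bar{B}_{n+1}) = d_H(\bar{B}_n, \bar{B}_{n+1})$, so indeed $\diam \bar{B}_n \to 0$.

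Now I would pick an arbitrary point $c_n \in \bar{B}_n$ for each $n$; then $\{c_n\} \in \bar{\BB}_X$, and Lemma~\ref{l6.12} (with the case $\{c_n\} = \bar{B}_n$ being trivial) gives $d_H(\{c_n\}, \bar{B}_n) = \diam \bar{B}_n \to 0$. Hence $d_H(\{c_n\}, C) \leqslant d_H(\{c_n\}, \bar{B}_n) + d_H(\bar{B}_n, C) \to 0$, so the sequence of singletons $(\{c_n\})_{n \in \mathbb{N}}$, which is contained in $\hat{X} = \{\{x\} \colon x \in X\}$, converges to $C$ in $(\mathfrak{M}_X, d_H)$. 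By Theorem~\ref{t3.2} the set $\hat{X}$ is closed in $\mathfrak{M}_X$, whence $C \in \hat{X} \subseteq \bar{\BB}_X$, completing the argument.

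The step I expect to be the main obstacle is establishing $\diam \bar{B}_n \to 0$ in the non-constant case: this is precisely where the ultrametric hypothesis is used, via Lemma~\ref{l6.12}, since for a general metric space a $d_H$-convergent sequence of balls may well have diameters bounded away from zero. Once that is in place, collapsing each ball to one of its points and invoking the already-established closedness of the isometric copy $\hat{X}$ finishes the proof without further effort.
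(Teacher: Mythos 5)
Your proof is correct and follows essentially the same route as the paper: the key step in both is to deduce \(\lim_{n\to\infty}\diam\bar{B}_n=0\) from \(d_H(\bar{B}_n,\bar{B}_{n+1})\to 0\) via Lemma~\ref{l6.12}, after which the limit is identified as a singleton (the paper argues this directly from~\eqref{e6.11}, while you collapse each ball to a chosen point and invoke the closedness of \(\hat{X}\) from Theorem~\ref{t3.2}). Your explicit handling of repeated terms is in fact a small refinement, since Lemma~\ref{l6.12} applies only to distinct balls and the paper's write-up passes over that case silently.
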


\begin{proof}
By Proposition~\ref{p3.2}, the set \(\bar{\BB}_X\) is a subset of \(\mathfrak{M}_X\). The set \(\bar{\BB}_X\) is closed in \((\mathfrak{M}_X, d_H)\) iff
\begin{equation}\label{l3.7:e1}
\operatorname{acc}_{\mathfrak{M}_X}(\bar{\BB}_X) \subseteq \bar{\BB}_X
\end{equation}
holds. Let a sequence \((\bar{B}_n)_{n \in \mathbb{N}}\) of closed balls converge to \(A \in \mathfrak{M}_X\),
\begin{equation}\label{l3.7:e2}
\lim_{n \to \infty} d_H(A, \bar{B}_n) = 0.
\end{equation}
To prove~\eqref{l3.7:e1} it suffices to show that
\begin{equation}\label{l3.7:e3}
A \in \bar{\BB}_X.
\end{equation}
Let us do it. Equality~\eqref{l3.7:e2} implies
\[
\lim_{n \to \infty} d_H(\bar{B}_n, \bar{B}_{n+1}) = 0.
\]
The last limit relation and equality~\eqref{l6.12:e1} give us
\[
\lim_{n \to \infty} \diam(\bar{B}_n \cup \bar{B}_{n+1}) = 0,
\]
that implies
\begin{equation}\label{l3.7:e4}
\lim_{n \to \infty} \diam(\bar{B}_n) = 0.
\end{equation}
Now using~\eqref{e6.11} and \eqref{l3.7:e4} we can prove that \(A = \{a\}\) for some \(a \in X\). Consequently, \(A\) is the closed ball with the center \(a\) and the radius \(r = 0\). Membership relation~\eqref{l3.7:e3} follows.
\end{proof}

\begin{proposition}\label{p3.11}
Let \((X, d)\) be a metric space. Then \((\mathfrak{M}_X, d_H)\) is complete if and only if \((X, d)\) is complete.
\end{proposition}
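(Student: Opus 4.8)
The statement is an ``if and only if'', and only the implication ``\((X, d)\) complete \(\Rightarrow (\mathfrak{M}_X, d_H)\) complete'' requires real work. For the reverse implication ``\((\mathfrak{M}_X, d_H)\) complete \(\Rightarrow (X, d)\) complete'' the plan is to invoke Theorem~\ref{t3.2}: the set \(\hat{X} = \bigl\{\{x\} \colon x \in X\bigr\}\) is a closed subset of \((\mathfrak{M}_X, d_H)\) and an isometric copy of \((X, d)\). If \((\mathfrak{M}_X, d_H)\) is complete, then its closed subspace \((\hat{X}, d_H)\) is complete by Theorem~\ref{t1.10}, hence so is \((X, d)\), being isometric to \((\hat{X}, d_H)\).

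For the harder direction I would run the classical argument that the Hausdorff hyperspace over a complete space is complete. Let \((A_n)_{n \in \mathbb{N}}\) be a \(d_H\)-Cauchy sequence in \(\mathfrak{M}_X\). Writing \(d(x, S) := \inf_{s \in S} d(x, s)\) for \(x \in X\) and nonempty \(S \subseteq X\), the inequality \(|d(x, A_n) - d(x, A_m)| \leqslant d_H(A_n, A_m)\) shows that \((d(x, A_n))_{n \in \mathbb{N}}\) is Cauchy in \(\mathbb{R}\) for each \(x\), so one may define the candidate limit
\[
A := \bigl\{x \in X \colon \lim_{n \to \infty} d(x, A_n) = 0\bigr\}.
\]
The remaining work is four verifications: (i) \(A \neq \varnothing\); (ii) \(A\) is closed in \((X, d)\); (iii) \(A\) is bounded, so that \(A \in \mathfrak{M}_X\); and (iv) \(\lim_{n \to \infty} d_H(A_n, A) = 0\).

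Items (i) and the ``\(\sup_{a \in A_n} d(a, A) \to 0\)'' half of (iv) both come from one telescoping construction: given \(\varepsilon > 0\), choose indices \(p_1 < p_2 < \cdots\) with \(d_H(A_{p_i}, A_j) < \varepsilon 2^{-i}\) for all \(j \geqslant p_i\); starting from any \(x_1 \in A_{p_1}\), inductively pick \(x_{i+1} \in A_{p_{i+1}}\) with \(d(x_i, x_{i+1}) < \varepsilon 2^{-i}\); by completeness of \((X, d)\) the Cauchy sequence \((x_i)\) converges to some \(x \in X\), which lies in \(A\) (since \(d(x, A_{p_i}) \leqslant d(x, x_i) \to 0\), and then \(d(x, A_k) \to 0\) via the Cauchy property of \((A_n)\)), and \(d(x_1, x) \leqslant \sum_{i \geqslant 1} \varepsilon 2^{-i} = \varepsilon\). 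Running this once gives \(A \neq \varnothing\); running it with \(p_1 = n\) and \(x_1 = a\) for a prescribed \(a \in A_n\) — legitimate once \(n\) is large — gives \(d(a, A) \leqslant \varepsilon\), hence \(\sup_{a \in A_n} d(a, A) \leqslant \varepsilon\) for all large \(n\). Item (ii) is a routine two-\(\varepsilon\) triangle-inequality argument. For (iii), a \(d_H\)-Cauchy sequence is \(d_H\)-bounded, so there is \(R\) with \(A_n \subseteq \{x \colon d(x, A_1) \leqslant R\}\) for every \(n\), and every \(x \in A\) is a limit of points of the \(A_n\) (take \(x_n \in A_n\) with \(d(x, x_n) \leqslant d(x, A_n) + 1/n\)), so \(A\) lies in the same bounded region. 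The other half of (iv), \(\sup_{x \in A} d(x, A_n) \to 0\), follows from \(d(x, A_n) \leqslant d(x, A_m) + d_H(A_m, A_n)\) on letting \(m \to \infty\) (using \(d(x, A_m) \to 0\)) and then taking \(n\) large.

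I expect the main obstacle to be the converse implication, and inside it the simultaneous verification that \(A\) is a bona fide element of \(\mathfrak{M}_X\) (in particular bounded) and that \(d_H(A_n, A) \to 0\): the delicate point is obtaining estimates that are uniform over all points of \(A_n\) and of \(A\), rather than merely the pointwise convergence \(d(x, A_n) \to 0\) built into the definition of \(A\), and it is precisely here that boundedness of the \(A_n\) and the geometrically controlled telescoping construction are used.
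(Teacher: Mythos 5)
Your proposal is correct. Note that the paper does not prove Proposition~\ref{p3.11} at all: it simply cites Theorem~10.7.2 of \cite{Sea2007}, so there is no internal argument to compare against. What you give is the classical completeness proof for the hyperspace \((\mathfrak{M}_X, d_H)\) — candidate limit \(A = \{x \colon \lim_n d(x, A_n) = 0\}\), the telescoping \(\varepsilon 2^{-i}\) chain to show \(A \neq \varnothing\) and \(\sup_{a \in A_n} d(a, A) \to 0\), closedness and boundedness of \(A\), and the easy half of the Hausdorff estimate by letting \(m \to \infty\) in \(d(x, A_n) \leqslant d(x, A_m) + d_H(A_m, A_n)\) — which is essentially the argument in the cited textbook, and all four verifications are carried out (or sketched) correctly. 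Your converse direction via Theorem~\ref{t3.2} and Theorem~\ref{t1.10} is exactly the mechanism the paper itself uses repeatedly (e.g.\ in Theorem~\ref{t3.13}), so the whole proof fits the paper's framework; the only cosmetic remark is that the reverse implication could alternatively be phrased by lifting a Cauchy sequence \((x_n)\) of \(X\) to the Cauchy sequence \((\{x_n\})\) of singletons, but using the closedness of \(\hat{X}\) as you do is equally valid.
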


For the proof see, for example, Theorem~10.7.2 \cite{Sea2007}.

Let us now move to the completeness of \(\bar{\BB}_X\).

\begin{theorem}\label{t3.13}
Let \((X, d)\) be an ultrametric space. Then the following statements are equivalent:
\begin{enumerate}
\item\label{t3.13:s1} The space \((X, d)\) is complete.
\item\label{t3.13:s2} The space \((\bar{\BB}_X, d_H)\) is complete.
\end{enumerate}
\end{theorem}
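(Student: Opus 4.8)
The plan is to reduce both implications to facts already available, using the embedding of $(X,d)$ into the hyperspace $(\mathfrak{M}_X, d_H)$ and the two closedness statements at our disposal: $\bar{\BB}_X$ is closed in $\mathfrak{M}_X$ (Lemma~\ref{l3.7}), and the isometric copy $\hat{X} := \bigl\{\{x\} \colon x \in X\bigr\}$ is closed in $\bar{\BB}_X$ (Corollary~\ref{c3.7}). The workhorse is Theorem~\ref{t1.10}: a nonempty subset of a complete metric space is complete if and only if it is closed.

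For \ref{t3.13:s1} $\Rightarrow$ \ref{t3.13:s2}, I would argue as follows. Assume $(X,d)$ is complete. Then $(\mathfrak{M}_X, d_H)$ is complete by Proposition~\ref{p3.11}. By Lemma~\ref{l3.7}, $\bar{\BB}_X$ is a closed subset of $(\mathfrak{M}_X, d_H)$, so $(\bar{\BB}_X, d_H)$ is complete by Theorem~\ref{t1.10}. For \ref{t3.13:s2} $\Rightarrow$ \ref{t3.13:s1}, assume $(\bar{\BB}_X, d_H)$ is complete. By Corollary~\ref{c3.7}, $\hat{X}$ is a closed subset of $(\bar{\BB}_X, d_H)$ and $(\hat{X}, d_H)$ is isometric to $(X,d)$; hence $(\hat{X}, d_H)$ is complete by Theorem~\ref{t1.10}, and since completeness is preserved under isometry, $(X,d)$ is complete. (Unpacked: a Cauchy sequence $(x_n)$ in $X$ yields the Cauchy sequence $(\{x_n\})$ in $\hat{X}$, whose $d_H$-limit lies in $\hat{X}$ by closedness, hence equals $\{x\}$ for some $x \in X$, which is then the limit of $(x_n)$.)

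I do not expect a real obstacle here; the content is entirely in the cited lemmas, and the proof is a short bookkeeping exercise. The only point meriting a word of caution is that Lemma~\ref{l3.7} — the closedness of $\bar{\BB}_X$ in $\mathfrak{M}_X$ — genuinely relies on the strong triangle inequality (through Lemma~\ref{l6.12}), so ultrametricity cannot be dropped in the first implication. As an alternative to routing through $\mathfrak{M}_X$, one could prove \ref{t3.13:s1} $\Rightarrow$ \ref{t3.13:s2} directly: given a $d_H$-Cauchy sequence of closed balls $(\bar{B}_n)$, Lemma~\ref{l6.12} forces the sequence to be either eventually constant (nothing to prove) or such that $\diam \bar{B}_n \to 0$, in which case any choice of centers $c_n \in \bar{B}_n$ is $d$-Cauchy, its limit $a$ exists by completeness of $X$, and a short estimate shows $d_H(\bar{B}_n, \{a\}) \to 0$; but this is strictly longer than the argument above.
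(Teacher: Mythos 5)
Your proof is correct and follows essentially the same route as the paper: the forward implication via Proposition~\ref{p3.11}, Lemma~\ref{l3.7} and Theorem~\ref{t1.10}, and the converse via the closed isometric copy \(\hat{X}\) from Corollary~\ref{c3.7} together with Theorem~\ref{t1.10}. The extra unpacking and the sketched direct alternative are fine but not needed.
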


\begin{proof}
\(\ref{t3.13:s1} \Rightarrow \ref{t3.13:s2}\). Let \((X, d)\) be complete. Then \((\mathfrak{M}_X, d_H)\) is complete by Proposition~\ref{p3.11}. The set \(\bar{\BB}_X\) is closed in \((\mathfrak{M}_X, d_H)\) by Lemma~\ref{l3.7}. Hence, \((\bar{\BB}_X, d_H)\) is a complete space by Theorem~\ref{t1.10}.

\(\ref{t3.13:s1} \Rightarrow \ref{t3.13:s2}\). Let \((\bar{\BB}_X, d_H)\) be complete and let
\[
\hat{X} := \bigl\{\{x\} \colon x\in X\bigr\}.
\]
Corollary~\ref{c3.7} implies that \(\hat{X}\) is a closed subset of \((\bar{\BB}_X, d_H)\). Consequently, \((\hat{X}, d_H)\) is complete by Theorem~\ref{t1.10}. Now using Corollary~\ref{c3.7} we see that the spaces \((\hat{X}, d_H)\) and \((X, d)\) are isometric. Thus, \((X, d)\) is also complete.
\end{proof}

The next proposition is a reformulation of Theorem~3.1 from~\cite{Hen1999CATBOTHM}.

\begin{proposition}\label{p3.15}
Let \((X, d)\) be a metric space. Then \((\mathfrak{M}_X, d_H)\) is totally bounded if and only if \((X, d)\) is totally bounded.
\end{proposition}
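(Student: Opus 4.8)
The plan is to prove both implications directly from the definition of total boundedness; the ultrametric hypothesis plays no role, which matches the stated generality. (One could instead simply invoke Theorem~3.1 of~\cite{Hen1999CATBOTHM}, of which this proposition is a reformulation, but an elementary self-contained argument is short.)

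For the implication ``$(\mathfrak{M}_X, d_H)$ totally bounded $\Rightarrow$ $(X,d)$ totally bounded'' I would use the isometric embedding already available. By Theorem~\ref{t3.2} the set $\hat{X} = \bigl\{\{x\}\colon x\in X\bigr\}$, equipped with $d_H$, is an isometric copy of $(X,d)$ contained in $(\mathfrak{M}_X, d_H)$. Since by Proposition~\ref{p1.6} every nonempty subset of a totally bounded metric space is totally bounded, $(\hat{X}, d_H)$ is totally bounded, hence so is $(X,d)$.

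The converse is the only part with (minimal) content, and I would carry it out by exhibiting explicit finite nets. Fix $\varepsilon > 0$ and choose a finite $\varepsilon$-net $F = \{x_1,\dots,x_n\}$ for $(X,d)$. Let $\mathcal{N}$ be the family of all nonempty subsets of $F$; it is finite, and each of its members is a finite, hence closed and bounded, subset of $X$, so $\mathcal{N}\subseteq\mathfrak{M}_X$. I claim $\mathcal{N}$ is an $\varepsilon$-net for $(\mathfrak{M}_X, d_H)$. Given $A\in\mathfrak{M}_X$, set
\[
S := \bigl\{x\in F \colon \bar{B}_\varepsilon(x)\cap A \neq \varnothing\bigr\}.
\]
Because $F$ is an $\varepsilon$-net and $A\neq\varnothing$, every point of $A$ lies within $\varepsilon$ of some point of $F$, so $S\neq\varnothing$ and $S\in\mathcal{N}$. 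To bound $d_H(A,S)$ via~\eqref{e6.11} I would check the two suprema separately: for each $a\in A$ pick $x\in F$ with $d(a,x)\leqslant\varepsilon$, so that $x\in S$ and hence $\inf_{s\in S}d(a,s)\leqslant\varepsilon$; and for each $s\in S$ the definition of $S$ supplies $a\in A$ with $d(s,a)\leqslant\varepsilon$, so $\inf_{a\in A}d(s,a)\leqslant\varepsilon$. Therefore $d_H(A,S)\leqslant\varepsilon$. Since $A$ was arbitrary, $\mathcal{N}$ is a finite $\varepsilon$-net, and since $\varepsilon>0$ was arbitrary, $(\mathfrak{M}_X, d_H)$ is totally bounded.

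I do not expect a genuine obstacle. The one point that needs care is the choice of the ``trace'' set $S$: it must be defined so that \emph{both} halves of the Hausdorff distance are controlled at once. Taking $S$ to be the set of net points lying $\varepsilon$-close to $A$ achieves this, whereas a naive nearest-point selection (one net point per point of $A$) would control only $\sup_{a\in A}\inf_{s\in S}d(a,s)$ and leave the reverse supremum uncontrolled.
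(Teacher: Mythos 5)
Your proof is correct. Note, however, that the paper does not prove Proposition~\ref{p3.15} at all: it is stated as a reformulation of Theorem~3.1 of~\cite{Hen1999CATBOTHM} and the reader is simply referred there, so your self-contained argument is genuinely more than what the text supplies (it is essentially the standard proof, and in substance the one in the cited source). Both halves are sound. The direction ``\((\mathfrak{M}_X,d_H)\) totally bounded \(\Rightarrow\) \((X,d)\) totally bounded'' via the isometric copy \(\hat{X}\) of Theorem~\ref{t3.2} together with Proposition~\ref{p1.6} is exactly the hereditary-subspace technique the author uses elsewhere (for instance in the proof of Theorem~\ref{t3.16}). For the converse, your choice of the trace set \(S=\{x\in F\colon \bar{B}_\varepsilon(x)\cap A\neq\varnothing\}\) is the right one: it controls both suprema in~\eqref{e6.11} simultaneously, and your remark that a nearest-point selection would only bound one of them is exactly the pitfall to avoid. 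The only cosmetic point is the usual \(\leqslant\varepsilon\) versus \(<\varepsilon\) mismatch in the definition of an \(\varepsilon\)-net, which is harmless since one may run the argument with \(\varepsilon/2\). Your observation that the ultrametric structure is irrelevant here is also consistent with the proposition being stated for arbitrary metric spaces, in contrast with its ballean restriction Theorem~\ref{t3.16}, where the ultrametric inequality is likewise not needed for this step.
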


The restriction of Proposition~\ref{p3.15} to the set \(\bar{\BB}_X\) gives us the next theorem.

\begin{theorem}\label{t3.16}
Let \((X, d)\) be an ultrametric space. Then the following statements are equivalent:
\begin{enumerate}
\item \label{t3.16:s1} The space \((X, d)\) is totally bounded.
\item \label{t3.16:s2} The space \((\bar{\BB}_X, d_H)\) is totally bounded.
\end{enumerate}
\end{theorem}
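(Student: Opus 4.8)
The plan is to derive both implications from Proposition~\ref{p3.15} together with the fact, recorded in Proposition~\ref{p1.6}, that every nonempty subspace of a totally bounded metric space is itself totally bounded. In other words, the theorem should follow by exactly the same scheme as Theorem~\ref{t3.13}, with ``complete'' replaced by ``totally bounded'' and with Lemma~\ref{l3.7}/Theorem~\ref{t1.10} replaced by Proposition~\ref{p1.6}.

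First I would prove \(\ref{t3.16:s1} \Rightarrow \ref{t3.16:s2}\). Assume that \((X, d)\) is totally bounded. Then \((\mathfrak{M}_X, d_H)\) is totally bounded by Proposition~\ref{p3.15}. By Proposition~\ref{p3.2}, \(\bar{\BB}_X\) is a nonempty subset of \(\mathfrak{M}_X\), so applying Proposition~\ref{p1.6} with \(A = \bar{\BB}_X \subseteq \mathfrak{M}_X\) shows that \((\bar{\BB}_X, d_H)\) is totally bounded.

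For the converse \(\ref{t3.16:s2} \Rightarrow \ref{t3.16:s1}\), assume that \((\bar{\BB}_X, d_H)\) is totally bounded and set \(\hat{X} := \bigl\{\{x\} \colon x \in X\bigr\}\). By Corollary~\ref{c3.7} we have \(\hat{X} \subseteq \bar{\BB}_X\), and \(\hat{X}\) is nonempty since \(X\) is nonempty; hence \((\hat{X}, d_H)\) is totally bounded by Proposition~\ref{p1.6}. Corollary~\ref{c3.7} also states that \((\hat{X}, d_H)\) is an isometric copy of \((X, d)\), and total boundedness is clearly an isometry invariant, so \((X, d)\) is totally bounded.

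There is essentially no obstacle here; the only point requiring care is to invoke the two cited results with the correct ambient spaces (Proposition~\ref{p3.15} for the pair \((X,d)\) and \((\mathfrak{M}_X,d_H)\), and Proposition~\ref{p1.6} for passing to the subspaces \(\bar{\BB}_X\) and \(\hat{X}\)). Keeping the argument parallel to that of Theorem~\ref{t3.13} also makes the section uniform in style, and it immediately feeds into Corollary~\ref{c3.17} via the Heine--Borel theorem (Theorem~\ref{t1.11}) once combined with Theorem~\ref{t3.13}.
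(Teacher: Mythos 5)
Your proof is correct and follows essentially the same route as the paper: Proposition~\ref{p3.15} plus Proposition~\ref{p1.6} for \(\ref{t3.16:s1} \Rightarrow \ref{t3.16:s2}\), and Corollary~\ref{c3.7} plus Proposition~\ref{p1.6} with the isometry invariance of total boundedness for the converse. Nothing to change.
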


\begin{proof}
\(\ref{t3.16:s1} \Rightarrow \ref{t3.16:s2}\). Let \((X, d)\) be totally bounded. Then \((\mathfrak{M}_X, d_H)\) is totally bounded by Proposition~\ref{p3.15}. Now the inclusion \(\bar{\BB}_X \subseteq \mathfrak{M}_X\) and Proposition~\ref{p1.6} imply that \((\bar{\BB}_X, d_H)\) is totally bounded.

\(\ref{t3.16:s2} \Rightarrow \ref{t3.16:s1}\). Let \((\bar{\BB}_X, d_H)\) be totally bounded. By Corollary~\ref{c3.7}, the set
\[
\hat{X} := \bigl\{\{x\} \colon x\in X\bigr\}
\]
is a subset of \(\bar{\BB}_X\). Consequently, the space \((\hat{X}, d_H)\) is totally bounded by Proposition~\ref{p1.6}. Since \((X, d)\) and \((\hat{X}, d_H)\) are isometric, \((X, d)\) is also totally bounded.
\end{proof}

Theorem~\ref{t1.11}, Theorem~\ref{t3.13} and Theorem~\ref{t3.16} give us the next conclusion.

\begin{corollary}\label{c3.17}
Let \((X, d)\) be an ultrametric space. Then \((\bar{\BB}_X, d_H)\) is compact if and only if \((X, d)\) is compact.
\end{corollary}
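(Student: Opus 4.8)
The plan is to deduce the corollary directly from the Heine--Borel characterization of compactness (Theorem~\ref{t1.11}) together with the two equivalences already established for completeness and total boundedness. Since $(\bar{\BB}_X, d_H)$ is a metric space (indeed ultrametric, by Theorem~\ref{l6.13}, though that is not needed here), Theorem~\ref{t1.11} applies equally to $(X,d)$ and to $(\bar{\BB}_X, d_H)$.

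The argument I would write is a short chain of equivalences. First, by Theorem~\ref{t1.11}, the space $(X,d)$ is compact if and only if it is both complete and totally bounded. Next, Theorem~\ref{t3.13} gives that $(X,d)$ is complete if and only if $(\bar{\BB}_X, d_H)$ is complete, and Theorem~\ref{t3.16} gives that $(X,d)$ is totally bounded if and only if $(\bar{\BB}_X, d_H)$ is totally bounded; hence $(X,d)$ is complete and totally bounded if and only if $(\bar{\BB}_X, d_H)$ is complete and totally bounded. Finally, applying Theorem~\ref{t1.11} once more to the metric space $(\bar{\BB}_X, d_H)$, the latter condition holds if and only if $(\bar{\BB}_X, d_H)$ is compact. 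Combining these three biconditionals yields that $(X,d)$ is compact if and only if $(\bar{\BB}_X, d_H)$ is compact.

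There is essentially no obstacle here: the corollary is a formal consequence of results proved earlier, so the only care needed is to invoke Theorem~\ref{t1.11} in \emph{both} directions (for $X$ and for $\bar{\BB}_X$) and to make explicit that the conjunction of the two component equivalences (completeness and total boundedness) is what bridges the two Heine--Borel applications. No new estimates, constructions, or case analysis are required.

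\begin{proof}
By Theorem~\ref{t1.11}, the ultrametric space $(X, d)$ is compact if and only if $(X, d)$ is complete and totally bounded. Theorem~\ref{t3.13} shows that $(X, d)$ is complete if and only if $(\bar{\BB}_X, d_H)$ is complete, and Theorem~\ref{t3.16} shows that $(X, d)$ is totally bounded if and only if $(\bar{\BB}_X, d_H)$ is totally bounded. Hence $(X, d)$ is complete and totally bounded if and only if $(\bar{\BB}_X, d_H)$ is complete and totally bounded. Applying Theorem~\ref{t1.11} to the metric space $(\bar{\BB}_X, d_H)$, the latter holds if and only if $(\bar{\BB}_X, d_H)$ is compact. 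Therefore $(\bar{\BB}_X, d_H)$ is compact if and only if $(X, d)$ is compact.
\end{proof}
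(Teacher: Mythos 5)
Your proof is correct and follows exactly the paper's route: the corollary is stated there as an immediate consequence of the Heine--Borel theorem (Theorem~\ref{t1.11}) combined with the completeness equivalence (Theorem~\ref{t3.13}) and the total boundedness equivalence (Theorem~\ref{t3.16}). Your write-up simply makes explicit the chain of biconditionals that the paper leaves to the reader.
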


Using Corollary~\ref{c3.17} we can also proved the following theorem.

\begin{theorem}\label{t5.14}
Let \((X, d)\) be an ultrametric space. Then the following statements are equivalent:
\begin{enumerate}
\item \label{t5.14:s1} The space \((X, d)\) is locally compact.
\item \label{t5.14:s2} The space \((\bar{\BB}_X, d_H)\) is locally compact.
\end{enumerate}
\end{theorem}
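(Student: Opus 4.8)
The plan is to treat the two implications separately. The direction \ref{t5.14:s2}$\Rightarrow$\ref{t5.14:s1} is the routine one: by Corollary~\ref{c3.7} the set $\hat{X} = \bigl\{\{x\} \colon x \in X\bigr\}$ is a closed subspace of $(\bar{\BB}_X, d_H)$ which is isometric to $(X, d)$, so it suffices to recall that a closed subspace of a locally compact metric space is locally compact. Concretely, for $\{x\} \in \hat{X}$ pick $r > 0$ with the closed $d_H$-ball $\bar{B}_r(\{x\})$ compact in $(\bar{\BB}_X, d_H)$; since $\hat{X}$ is closed, $\bar{B}_r(\{x\}) \cap \hat{X}$ is closed in this compact set, hence compact by Theorem~\ref{t1.10.1}, and it is the corresponding closed ball of the subspace $(\hat{X}, d_H)$. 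Transporting along the isometry of Corollary~\ref{c3.7} gives local compactness of $(X, d)$.

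For \ref{t5.14:s1}$\Rightarrow$\ref{t5.14:s2}, assume $(X,d)$ is locally compact and fix an arbitrary $\bar{B}_0 \in \bar{\BB}_X$; I must find $\rho_0 > 0$ so that the closed $d_H$-ball of radius $\rho_0$ centred at $\bar{B}_0$ is compact in $(\bar{\BB}_X, d_H)$. If $\diam \bar{B}_0 > 0$, then $\bar{B}_0 \in \bar{\BB}_X^0 = \operatorname{iso}(\bar{\BB}_X)$ by Theorem~\ref{t3.11}, so a sufficiently small closed $d_H$-ball about $\bar{B}_0$ equals the one-point set $\{\bar{B}_0\}$ and is trivially compact. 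The remaining case is $\bar{B}_0 = \{c_0\}$ for some $c_0 \in X$ (by Corollary~\ref{c3.18} one has $c_0 \in \operatorname{acc}(X)$, although this is not needed). Using local compactness of $(X,d)$, choose $r_0 > 0$ with $Y := \bar{B}_{r_0}(c_0)$ compact in $(X, d)$ and put $\rho := d|_{Y \times Y}$, so $(Y, \rho)$ is a compact ultrametric space; by Corollary~\ref{c3.17} the space $(\bar{\BB}_Y, d_H)$ is compact, by Corollary~\ref{c3.4} its underlying set is $\{\bar{B} \in \bar{\BB}_X \colon \bar{B} \subseteq Y\}$, and by Lemma~\ref{l6.12} the Hausdorff metric on $\bar{\BB}_Y$ coincides with the restriction of $d_H$ from $\bar{\BB}_X$ (for distinct balls both equal the diameter of their union, a quantity insensitive to the ambient space).

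The heart of the argument is that $\rho_0 := r_0$ works because the closed $d_H$-ball $\bar{B}_{r_0}(\{c_0\})$ in $(\bar{\BB}_X, d_H)$ is already contained in $\bar{\BB}_Y$: if $\bar{B} \in \bar{\BB}_X$ with $d_H(\{c_0\}, \bar{B}) \leqslant r_0$ and $\bar{B} \neq \{c_0\}$, then by Lemma~\ref{l6.12} every $x \in \bar{B}$ satisfies $d(c_0, x) \leqslant \diam(\{c_0\} \cup \bar{B}) = d_H(\{c_0\}, \bar{B}) \leqslant r_0$, whence $\bar{B} \subseteq Y$, i.e. $\bar{B} \in \bar{\BB}_Y$; and $\{c_0\} \in \bar{\BB}_Y$ as well. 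Since closed balls are closed sets, $\bar{B}_{r_0}(\{c_0\})$ is a closed subset of the compact space $(\bar{\BB}_Y, d_H)$, hence compact by Theorem~\ref{t1.10.1}, hence compact in $(\bar{\BB}_X, d_H)$ by Proposition~\ref{p1.5}. Thus $(\bar{\BB}_X, d_H)$ is locally compact at $\bar{B}_0$, completing the proof.

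I expect the main obstacle to be exactly this localization: realizing that small $d_H$-balls around a singleton $\{c_0\}$ cannot escape a fixed compact ball $Y = \bar{B}_{r_0}(c_0)$ of $X$, so that Corollary~\ref{c3.17} (compactness of balleans of compact spaces) becomes applicable, and then matching up the subspace structures $\bar{\BB}_Y \subseteq \bar{\BB}_X$ and the two copies of the Hausdorff metric via Corollary~\ref{c3.4} and Lemma~\ref{l6.12}. Everything else — the easy direction and the isolated-ball case — is bookkeeping on top of results already established in the excerpt.
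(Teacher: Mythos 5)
Your proof is correct and takes essentially the same route as the paper's: the easy direction via the closed isometric copy \(\hat{X}\) from Corollary~\ref{c3.7}, and the hard direction by disposing of isolated balls trivially and trapping the \(d_H\)-ball around a singleton \(\{c_0\}\) inside the ballean of a compact ball \(\bar{B}_{r_0}(c_0)\), then invoking Corollary~\ref{c3.17} together with Corollary~\ref{c3.4} and Lemma~\ref{l6.12}. The only cosmetic difference is that the paper proves the \(d_H\)-ball actually \emph{equals} the ballean of \(\bar{B}_{r_0}(c_0)\), whereas you establish containment and finish with closedness, Theorem~\ref{t1.10.1} and Proposition~\ref{p1.5}, which is equally valid.
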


\begin{proof}
\(\ref{t5.14:s1} \Rightarrow \ref{t5.14:s2}\). Let \((X, d)\) be locally compact. We must show that \((\bar{\BB}_X, d_H)\) is locally compact.

The ultrametric space \((\bar{\BB}_X, d_H)\) is locally compact if, for every \(\bar{B}_{1} \in \bar{\BB}_X\), there is \(r = r(\bar{B}_{1}) > 0\) such that the set
\begin{equation*}
\mathbf{A}_r(\bar{B}_{1}) := \{\bar{B} \in \bar{\BB}_X \colon d_H(\bar{B}, \bar{B}_{1}) \leqslant r\}
\end{equation*}
is compact. The last statement follows from Definition~\ref{d2.22} because, in the space \((\bar{\BB}_X, d_H)\), the set \(\mathbf{A}_r(\bar{B}_{1})\) is the closed ball with the center \(\bar{B}_{1}\) and the radius \(r\).

If \(\bar{B}_{1}\) is an isolated point of \((\bar{\BB}_X, d_H)\), then the existence of desirable \(r\) follows from Definition~\ref{d1.21} with \((Y, \delta) = (\bar{\BB}_X, d_H)\) and \(s = \bar{B}_{1}\).

Suppose that \(\bar{B}_{1} \in \operatorname{acc}(\bar{\BB}_X)\). Then, by Corollary~\ref{c3.18}, there is \(c_1 \in \operatorname{acc}(X)\) such that the equality \(\bar{B}_{1} = \{c_1\}\) holds and, consequently,
\begin{equation}\label{t5.14:e3}
\mathbf{A}_r(\bar{B}_{1}) = \mathbf{A}_r(\{c_1\}) = \{\bar{B} \in \bar{\BB}_X \colon d_H(\bar{B}, \{c_1\}) \leqslant r\}.
\end{equation}
Let us consider the set
\begin{equation}\label{t5.14:e4}
A_r := \bigcup_{\bar{B} \in \mathbf{A}_r(\{c_1\})} \bar{B}.
\end{equation}
Then the equality
\begin{equation}\label{t5.14:e5}
A_r = \bar{B}_r(c_1)
\end{equation}
holds. Indeed, the inclusion
\begin{equation}\label{t5.14:e6}
\bar{B}_r(c_1) \subseteq A_r
\end{equation}
follows from~\eqref{t5.14:e3} and~\eqref{t5.14:e4} by Corollary~\ref{c3.7}. Let us prove the converse inclusion
\begin{equation}\label{t5.14:e7}
A_r \subseteq \bar{B}_r(c_1).
\end{equation}
The last inclusion holds iff the inequality
\begin{equation}\label{t5.14:e8}
d_H(\bar{B}, \{c_1\}) \leqslant r
\end{equation}
imply the inequality
\begin{equation}\label{t5.14:e9}
d(x, c_1) \leqslant r
\end{equation}
for all \(x \in \bar{B}\) and \(\bar{B} \in \mathbf{A}_r(\{c_1\})\).

Suppose that~\eqref{t5.14:e8} holds. Then, by Lemma~\ref{l6.12}, we have the inequality
\begin{equation}\label{t5.14:e10}
\diam (\{c_1\}, \bar{B}) \leqslant r
\end{equation}
for every \(\bar{B} \in \mathbf{A}_r(\{c_1\})\). Inequality~\eqref{t5.14:e10} and equality~\eqref{e2.3} with \(A = \{c_1\} \cup \bar{B}\) and \(p_0 = c_1\) give us inequality~\eqref{t5.14:e9}. Thus, \eqref{t5.14:e7} holds.

Equality~\eqref{t5.14:e5} follows from \eqref{t5.14:e6} and \eqref{t5.14:e7}.

Now we are ready to find \(r > 0\) such that the closed ball \(\mathbf{A}_r(\{c_1\})\) is a compact subset of \((\bar{\BB}_X, d_H)\). Let us do it.

Since \((X, d)\) is locally compact, there exists \(r_1 > 0\) such that the closed ball \(\bar{B}_{r_1}(c_1)\) is compact in \((X, d)\). Hence, the ultrametric space \((A_{r_1}, d|_{A_{r_1} \times A_{r_1}})\) is compact by equality~\eqref{t5.14:e5}. Let \(\bar{\BB}_{A_{r_1}}\) be the ballean of the space \((A_{r_1}, d|_{A_{r_1} \times A_{r_1}})\). Then the ultrametric space \((\bar{\BB}_{A_{r_1}}, d_H)\) is compact by Corollary~\ref{c3.17}. Corollary~\ref{c3.4} give us the equality
\[
\bar{\BB}_{A_{r_1}} = \mathbf{A}_{r_1}(\{c_1\}).
\]
Hence, the space \((\mathbf{A}_{r_1}(\{c_1\}), d_H)\) is also compact.

Thus, the ultrametric space \((\bar{\BB}_X, d_H)\) is locally compact as required.

\(\ref{t5.14:s2} \Rightarrow \ref{t5.14:s1}\). Let \((\bar{\BB}_X, d_H)\) be locally compact. By Corollary~\ref{c3.7}, the set
\[
\hat{X} := \bigl\{\{x\} \colon x\in X\bigr\}
\]
is closed subset of \((\bar{\BB}_X, d_H)\). Now using Theorem~\ref{t1.10.1} and Definition~\ref{d2.22} we can prove that \((\hat{X}, d_H)\) is also locally compact. Moreover, the spaces \((X, d)\) and \((\hat{X}, d_H)\) are isometric by Corollary~\ref{c3.7}. Thus, \((X, d)\) is locally compact as required.
\end{proof}

The following theorem shows that \((X, d)\) and \((\bar{\BB}_X, d_H)\) can only be boundedly compact together.

\begin{theorem}\label{t3.17}
Let \((X, d)\) be an ultrametric space. Then the following statements are equivalent:
\begin{enumerate}
\item \label{t3.17:s1} The space \((X, d)\) is boundedly compact.
\item \label{t3.17:s2} The space \((\bar{\BB}_X, d_H)\) is boundedly compact.
\end{enumerate}
\end{theorem}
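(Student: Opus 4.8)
The plan is to derive both implications from the compactness equivalence already established in Corollary~\ref{c3.17}, using Proposition~\ref{p3.7} as the bridge between a bounded family of balls and a single bounded ball that contains all of them.

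For the implication \(\ref{t3.17:s1} \Rightarrow \ref{t3.17:s2}\), I would fix a closed and bounded subset \(\mathbf{A}\) of \((\bar{\BB}_X, d_H)\) and show that it is compact. The cases \(\mathbf{A} = \varnothing\) and \(|\mathbf{A}| = 1\) are immediate, so assume \(|\mathbf{A}| \geqslant 2\) and set \(A := \bigcup_{\bar{B} \in \mathbf{A}} \bar{B}\). By Proposition~\ref{p3.7}, the set \(A\) is bounded in \((X, d)\) and the smallest ball \(B^* = B^*(A)\) satisfies \(\diam B^* = \diam \mathbf{A} < \infty\). Thus \(B^*\) is a closed and bounded subset of \((X, d)\), hence compact since \((X, d)\) is boundedly compact. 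Therefore \((\bar{\BB}_{B^*}, d_H)\) is compact by Corollary~\ref{c3.17}, where \(\bar{\BB}_{B^*}\) denotes the ballean of \((B^*, d|_{B^* \times B^*})\); moreover, by Corollary~\ref{c3.4}, \(\bar{\BB}_{B^*} = \{\bar{B} \in \bar{\BB}_X \colon \bar{B} \subseteq B^*\}\). Since every \(\bar{B} \in \mathbf{A}\) is contained in \(A \subseteq B^*\), we get \(\mathbf{A} \subseteq \bar{\BB}_{B^*}\), and \(\mathbf{A}\), being closed in \((\bar{\BB}_X, d_H)\), is also closed in \((\bar{\BB}_{B^*}, d_H)\). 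Hence \(\mathbf{A}\) is compact by Theorem~\ref{t1.10.1}, and then compact as a subset of \((\bar{\BB}_X, d_H)\) by Proposition~\ref{p1.5}.

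For the implication \(\ref{t3.17:s2} \Rightarrow \ref{t3.17:s1}\), I would rely on Corollary~\ref{c3.7}: the set \(\hat{X} := \bigl\{\{x\} \colon x \in X\bigr\}\) is a closed subset of \((\bar{\BB}_X, d_H)\) and \((\hat{X}, d_H)\) is isometric to \((X, d)\). It then suffices to check that a closed subspace of a boundedly compact metric space is itself boundedly compact: if \(F \subseteq \hat{X}\) is closed in \((\hat{X}, d_H)\) and bounded, then \(F\) is closed in \((\bar{\BB}_X, d_H)\) (as the intersection of \(\hat{X}\) with a closed set, \(\hat{X}\) itself being closed) and bounded, hence compact, and Proposition~\ref{p1.5} transfers this compactness back to \((\hat{X}, d_H)\). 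Thus \((\hat{X}, d_H)\), and therefore \((X, d)\), is boundedly compact.

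The only genuinely delicate point is the first implication, namely the fact that every bounded subset of \(\bar{\BB}_X\) consists of balls all contained in one fixed bounded ball — this is exactly what Proposition~\ref{p3.7} supplies, so the argument collapses into routine manipulations with closed subsets of compact spaces once that proposition is in hand. The second implication is entirely standard.
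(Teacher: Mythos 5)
Your proof is correct and follows essentially the same route as the paper: Proposition~\ref{p3.7} to bound the union, the smallest ball \(B^*\), Corollary~\ref{c3.17} for compactness of \(\bar{\BB}_{B^*}\), then Theorem~\ref{t1.10.1} and Proposition~\ref{p1.5}, with the reverse implication handled identically via Corollary~\ref{c3.7}. The only cosmetic difference is that you justify \(\mathbf{A} \subseteq \bar{\BB}_{B^*}\) by Corollary~\ref{c3.4} while the paper invokes Proposition~\ref{p2.5}; both are fine.
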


\begin{proof}
\(\ref{t3.17:s1} \Rightarrow \ref{t3.17:s2}\). Let \((X, d)\) be boundedly compact and let \(\mathbf{A}\) be an arbitrary bounded and closed subset of the metric space \((\bar{\BB}_X, d_H)\). We must show that \(\mathbf{A}\) is compact in \((\bar{\BB}_X, d_H)\).

It is clear that \(\mathbf{A}\) is compact if \(\mathbf{A}\) is finite. Let us consider the case when the number of elements of \(\mathbf{A}\) is infinite.

As in~\eqref{p3.7:e1}, we denote by \(A\) the set \(\bigcup_{\bar{B} \in \mathbf{A}} \bar{B}\). Since \(\mathbf{A}\) is bounded in \((\bar{\BB}_X, d_H)\), the set \(A\) is bounded in \((X, d)\) by Proposition~\ref{p3.7}. Let \(B^* = B^*(A) \in \bar{\BB}_X\) be the smallest ball containing \(A\) and let
\[
d^* := d|_{B^* \times B^*}
\]
be the restriction of the ultrametric \(d\) on the set \(B^* \times B^*\). The closed ball \(B^*\) is a closed and bounded subset of \((X, d)\). Consequently, the ultrametric space \((B^*, d^*)\) is compact because \((X, d)\) is boundedly compact by supposition. Now using Corollary~\ref{c3.17} we see that \((\bar{\BB}_{B^*}, d_H^*)\) is compact, where \(\bar{\BB}_{B^*}\) is the ballean of closed balls of the ultrametric space \((B^*, d^*)\). It follows from Proposition~\ref{p2.5} and the definitions of the set \(A\) and the ball \(B^*\) that every \(\bar{B} \in \mathbf{A}\) belongs also to the ballean \(\bar{\BB}_{B^*}\). Hence, the inclusions
\begin{equation}\label{t3.17:e1}
\mathbf{A} \subseteq \bar{\BB}_{B^*} \subseteq \bar{\BB}_X
\end{equation}
hold. Since \(\mathbf{A}\) is a closed subset of \(\bar{\BB}_X\), double inclusion~\eqref{t3.17:e1} implies that \(\mathbf{A}\) is also closed in \((\bar{\BB}_{B^*}, d_H^*)\). By Theorem~\ref{t1.10.1}, the set \(\mathbf{A}\) is compact as a closed subset of the compact space \((\bar{\BB}_{B^*}, d_H^*)\). Now Proposition~\ref{p1.5} and \eqref{t3.17:e1} imply that \(\mathbf{A}\) is compact in \((\bar{\BB}_X, d_H)\).

\(\ref{t3.17:s2} \Rightarrow \ref{t3.17:s1}\). Let \((\bar{\BB}_X, d_H)\) be boundedly compact. The set
\[
\hat{X} := \bigl\{\{x\} \colon x\in X\bigr\}
\]
is isometric copy of \((X, d)\) by Corollary~\ref{c3.7}. Hence, \((X, d)\) is boundedly compact iff \((\hat{X}, d_H)\) is boundedly compact. Thus, to complete the proof it suffices to show that \((\hat{X}, d_H)\) is boundedly compact.

Let \(S \subseteq \hat{X}\) be bounded and closed in \(\hat{X}\). We must show that \(S\) is compact in \(\hat{X}\). Corollary~\ref{c3.7} implies that \(\hat{X}\) is a closed subset of \((\bar{\BB}_X, d_H)\). Hence, \(S\) is also closed in \((\bar{\BB}_X, d_H)\). Consequently, \(S\) is compact in the space \((\bar{\BB}_X, d_H)\) because \((\bar{\BB}_X, d_H)\) is boundedly compact. Proposition~\ref{p1.5} implies that \(S\) is a compact subset of \(\hat{X}\). Since \(S\) is an arbitrary closed and bounded subset of \(\hat{X}\), the space \((\hat{X}, d_H)\) is boundedly compact as required.
\end{proof}

\begin{example}\label{ex4.19}
Let \(X \subseteq \mathbb{R}^{+}\) be nonempty, \(0 \in X\) holds, and let \(d\) be the restriction of the Delhomm\'{e}---Laflamme---Pouzet---Sauer ultrametric \(d^+\) on \(X \times X\). Then \((\bar{\BB}_X, d_H)\) is boundedly compact if and only if the set \(X \cap [0, t]\) is finite for every \(t > 0\).
\end{example}

The next our goal is to describe conditions under which \((\bar{\BB}_X, d_H)\) is separable.

\begin{theorem}\label{t3.14}
Let \((X, d)\) be an ultrametric space. Then the following statements are equivalent:
\begin{enumerate}
\item\label{t3.14:s1} The set \(\bar{\BB}_X^0\) is countable.
\item\label{t3.14:s2} The space \((\bar{\BB}_X, d_H)\) is separable.
\end{enumerate}
\end{theorem}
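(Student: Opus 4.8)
The plan is to exploit the structure theorem for isolated points, namely the equality \(\bar{\BB}_X^0 = \operatorname{iso}(\bar{\BB}_X)\) established in Theorem~\ref{t3.11}, together with the fact (Proposition~\ref{p3.21}) that \(\bar{\BB}_X^0\) is the unique dense discrete subset of \((\bar{\BB}_X, d_H)\). For the implication \ref{t3.14:s1} \(\Rightarrow\) \ref{t3.14:s2}, I would argue that if \(\bar{\BB}_X^0\) is countable, then since it is dense in \((\bar{\BB}_X, d_H)\) by Proposition~\ref{p3.21}, the space \((\bar{\BB}_X, d_H)\) contains a countable dense subset and is therefore separable by Definition~\ref{d1.14}. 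This direction is essentially immediate from the machinery already developed.

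For the converse implication \ref{t3.14:s2} \(\Rightarrow\) \ref{t3.14:s1}, suppose \((\bar{\BB}_X, d_H)\) is separable. By Theorem~\ref{t3.11} we have \(\bar{\BB}_X^0 = \operatorname{iso}(\bar{\BB}_X)\), so it suffices to show that the set of isolated points of \((\bar{\BB}_X, d_H)\) is countable. But this follows directly from Proposition~\ref{p1.22}, applied to the separable metric space \((\bar{\BB}_X, d_H)\): the set \(\operatorname{iso}(\bar{\BB}_X)\) is countable. Hence \(\bar{\BB}_X^0\) is countable.

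Since both halves reduce cleanly to already-proved results, there is no genuine obstacle here; the entire content has been front-loaded into Theorem~\ref{t3.11}, Proposition~\ref{p3.21}, and Proposition~\ref{p1.22}. If I wanted a self-contained argument for the converse avoiding Proposition~\ref{p1.22}, I could instead observe that the balls \(\{\bar{B}\in\bar{\BB}_X : \diam\bar{B} > 0\}\) can be organized into families of pairwise disjoint members at each "level" and invoke Lemma~\ref{l1.20}, but the route through Proposition~\ref{p1.22} is shorter and is the one I would present. The only point requiring a word of care is the reminder that a subset of a separable metric space need not itself be separable unless — as here — we are speaking of the whole space, so that Proposition~\ref{p1.22} applies verbatim with \((Y,\delta) = (\bar{\BB}_X, d_H)\).
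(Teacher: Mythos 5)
Your proposal is correct and follows essentially the same route as the paper: the forward direction uses the density of \(\bar{\BB}_X^0\) from Proposition~\ref{p3.21} (via the identification \(\bar{\BB}_X^0=\operatorname{iso}(\bar{\BB}_X)\) of Theorem~\ref{t3.11}), and the converse applies Proposition~\ref{p1.22} to the separable space \((\bar{\BB}_X,d_H)\) together with the same identification. No gaps; this matches the paper's argument.
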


\begin{proof}
\(\ref{t3.14:s1} \Rightarrow \ref{t3.14:s2}\). Let \(\bar{\BB}_X^0\) be countable. Theorem~\ref{t3.11} implies that
\[
\bar{\BB}_X^0 = \operatorname{iso}(\bar{\BB}_X).
\]
The set \(\operatorname{iso}(\bar{\BB}_X)\) is dense in \((\bar{\BB}_X, d_H)\) by Proposition~\ref{p3.21}. Hence, \((\bar{\BB}_X, d_H)\) is separable as required.

\(\ref{t3.14:s2} \Rightarrow \ref{t3.14:s1}\). Let \((\bar{\BB}_X, d_H)\) be separable. Then the set \(\operatorname{iso}(\bar{\BB}_X)\) is countable by Proposition~\ref{p1.22}. Theorem~\ref{t3.11} gives us the equality
\[
\bar{\BB}_X^0 = \operatorname{iso}(\bar{\BB}_X).
\]
Thus, \(\bar{\BB}_X^0\) is countable as required.
\end{proof}

If \((X, d)\) is an ultrametric space with separable \((\bar{\BB}_X, d_H)\), then \((X, d)\) also is separable by Lemma~\ref{l1.16} and Theorem~\ref{t3.2}. The next example shows that \((X, d)\) can be separable and have the non-separable \((\bar{\BB}_X, d_H)\).

\begin{example}\label{ex3.19}
Let \(\mathbb{Q}\) be the set of all rational numbers and \(d^+\) be the Delhomm\'{e}---Laflamme---Pouzet---Sauer ultrametric. Let \(X := \mathbb{R}^+ \cap \mathbb{Q}\) and \(d\) be the restriction of \(d^+\) on the set \(X \times X\). Let us consider the closed balls
\[
\bar{B}_{1} := \bar{B}_{r_1}(0) \text{ and } \bar{B}_{2} := \bar{B}_{r_2}(0)
\]
in the ultrametric space \((X, d)\) for arbitrary \(r_1\), \(r_2 \in \mathbb{R}^+\). Then the equalities
\[
\bar{B}_{1} = [0, r_1] \cap \mathbb{Q} \text{ and } \bar{B}_{2} = [0, r_2] \cap \mathbb{Q}
\]
hold. Consequently, \(\bar{B}_{1}\) and \(\bar{B}_{2}\) are equal if and only if \(r_1 = r_2\). Hence, the set \(\bar{\BB}_X\) is uncountable because the set \(\mathbb{R}^+\) is uncountable. Thus, \((\bar{\BB}_X, d_H)\) is non-separable by Theorem~\ref{t3.14}.
\end{example}

\begin{remark}\label{r4.16}
Example~\ref{ex3.19} can also be used as a counterexample to Theorem~19.3 from~\cite{Sch1985}.
\end{remark}

The separable complete metric spaces are also known as \emph{Polish metric spaces}. Using this concept, we can draw the following conclusion.

\begin{corollary}\label{c3.13}
Let \((X, d)\) be an ultrametric space. Then \((\bar{\BB}_X, d_H)\) is Polish if and only if \((X, d)\) is Polish and \(\bar{\BB}_X^0\) is countable.
\end{corollary}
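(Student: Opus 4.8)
The plan is simply to unwind the word ``Polish'' on both sides of the equivalence and invoke the two characterizations already in hand. Recall that a metric space is Polish exactly when it is both separable and complete. So the claim to be proved is
\[
\bigl((\bar{\BB}_X, d_H)\text{ separable and complete}\bigr) \iff \bigl((X,d)\text{ separable and complete, and }\bar{\BB}_X^0\text{ countable}\bigr),
\]
and I would establish the two implications by splitting off ``separable'' and ``complete'' and routing them through Theorem~\ref{t3.14} and Theorem~\ref{t3.13} respectively.

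For the forward direction, suppose $(\bar{\BB}_X, d_H)$ is Polish. Then it is complete, so $(X,d)$ is complete by Theorem~\ref{t3.13}; and it is separable, so $\bar{\BB}_X^0$ is countable by Theorem~\ref{t3.14}. The one point not quite contained in those statements is that $(X,d)$ itself is separable: since $\hat{X} := \bigl\{\{x\}\colon x\in X\bigr\}$ is an isometric copy of $(X,d)$ inside the separable space $(\bar{\BB}_X, d_H)$ by Corollary~\ref{c3.7}, Lemma~\ref{l1.16} gives that $\hat{X}$, and hence $(X,d)$, is separable. Thus $(X,d)$ is Polish and $\bar{\BB}_X^0$ is countable.

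For the converse, suppose $(X,d)$ is Polish and $\bar{\BB}_X^0$ is countable. Completeness of $(X,d)$ yields completeness of $(\bar{\BB}_X, d_H)$ by Theorem~\ref{t3.13}, and countability of $\bar{\BB}_X^0$ yields separability of $(\bar{\BB}_X, d_H)$ by Theorem~\ref{t3.14}; a complete separable space is Polish, which finishes the argument.

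I do not expect any genuine obstacle: the corollary is a bookkeeping consequence of Theorems~\ref{t3.13} and~\ref{t3.14}. The only step beyond a literal restatement is noticing, in the forward direction, that ``$(X,d)$ Polish'' carries the extra requirement of separability of $(X,d)$, which must be transferred from $(\bar{\BB}_X, d_H)$ to the isometric copy $\hat{X}$ — precisely the observation recorded immediately after Theorem~\ref{t3.14}. (In fact the hypothesis ``$(X,d)$ separable'' is redundant once $\bar{\BB}_X^0$ is assumed countable, but retaining it keeps the symmetry with the left-hand side transparent.)
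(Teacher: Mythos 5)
Your proposal is correct and takes essentially the same route as the paper: the paper's proof is just the one-line citation of Theorems~\ref{t3.13} and~\ref{t3.14}, and your unwinding (including transferring separability from \((\bar{\BB}_X, d_H)\) to \((X,d)\) via the isometric copy \(\hat{X}\)) matches the observation the paper records immediately after Theorem~\ref{t3.14}. Nothing is missing.
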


\begin{proof}
It directly follows from Theorems~\ref{t3.13} and \ref{t3.14}.
\end{proof}

\section{Conclusion. Expected results and open questions}
\label{sec6}

Theorem~\ref{t3.13} claims that an ultrametric space \((X, d)\) is complete if and only if the ballean  \((\bar{\BB}_X, d_H)\) is complete. Let us now consider an ultrametric space \((X, d)\), which is not necessarily complete and denote by \((\tilde{X}, \tilde{d})\) the completion of \((X, d)\).

\begin{conjecture}\label{con6.1}
The ballean \((\bar{\BB}_{\tilde{X}}, \tilde{d}_H)\) of \((\tilde{X}, \tilde{d})\) is isometric to the completion of the ballean \((\bar{\BB}_X, d_H)\) for every ultrametric space \((X, d)\).
\end{conjecture}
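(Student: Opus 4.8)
The plan is to produce \((\bar{\BB}_{\tilde X}, \tilde d_H)\) \emph{directly} as a completion of \((\bar{\BB}_X, d_H)\). Since the completion of a metric space is unique up to isometry, it is enough to verify two things: that \((\bar{\BB}_{\tilde X}, \tilde d_H)\) is complete, and that it contains a dense subset isometric to \((\bar{\BB}_X, d_H)\). The first is immediate: the strong triangle inequality is preserved under limits, so \(\tilde d\) is again an ultrametric and all results of Sections~\ref{sec3}--\ref{sec5} apply to \((\tilde X, \tilde d)\); in particular \((\bar{\BB}_{\tilde X}, \tilde d_H)\) is complete by Theorem~\ref{t3.13}, because \((\tilde X, \tilde d)\) is complete.

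For the embedding I would set \(\Psi\colon \bar{\BB}_X \to \bar{\BB}_{\tilde X}\), \(\Psi(\bar B) := \) the closure of \(\bar B\) in \((\tilde X, \tilde d)\), and first prove the key lemma that \(\Psi\) is well defined: if \(\bar B = \bar B_r(c)\) with \(c \in X\), then \(\Psi(\bar B) = \bar B_r^{\tilde X}(c)\). The inclusion \(\Psi(\bar B) \subseteq \bar B_r^{\tilde X}(c)\) follows from continuity of \(\tilde d\); the converse uses density of \(X\) together with the strong triangle inequality, since \(\tilde d(c,z) \leq r\) and \(x \in X\) with \(\tilde d(z,x) \leq r\) force \(\tilde d(c,x) \leq \max\{\tilde d(c,z), \tilde d(z,x)\} \leq r\), i.e.\ \(x \in \bar B\). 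In particular \(\Psi(\bar B) \cap X = \bar B\), so \(\Psi\) is injective, and \(\diam \Psi(\bar B) = \diam \bar B\) by Lemma~\ref{l2.6}. That \(\Psi\) is an isometry onto its image then follows from Lemma~\ref{l6.12} applied in both spaces: for distinct \(\bar B_1, \bar B_2 \in \bar{\BB}_X\) one has \(d_H(\bar B_1, \bar B_2) = \diam(\bar B_1 \cup \bar B_2)\) and \(\tilde d_H(\Psi\bar B_1, \Psi\bar B_2) = \diam(\Psi\bar B_1 \cup \Psi\bar B_2)\), and since \(\Psi\bar B_1 \cup \Psi\bar B_2\) is the closure of \(\bar B_1 \cup \bar B_2 \subseteq X\), while passing to the closure does not change a diameter and \(\tilde d\) restricts to \(d\) on \(X\), the two values agree.

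The remaining — and decisive — step is density of \(\Psi(\bar{\BB}_X)\) in \((\bar{\BB}_{\tilde X}, \tilde d_H)\). The tree structure of ultrametric balls makes this rigid: by Lemma~\ref{l6.12}, if \(\bar C \in \bar{\BB}_{\tilde X}\) has \(\diam \bar C = \rho > 0\), then \(\tilde d_H(\bar C, \bar D) = \diam(\bar C \cup \bar D) \geq \rho\) for every \(\bar D \in \bar{\BB}_{\tilde X}\) distinct from \(\bar C\), so such a \(\bar C\) can be approximated only if it actually lies in \(\Psi(\bar{\BB}_X)\). Thus I must show \(\Psi\) is onto the positive-diameter balls of \(\tilde X\): given \(\bar C\) as above, a sequence in \(X\) converging to a center of \(\bar C\) eventually enters \(\bar C\) (as \(\rho > 0\)), so there is \(x \in X \cap \bar C\) and \(\bar C = \bar B_\rho^{\tilde X}(x)\) by Proposition~\ref{p2.4}; then, using the ultrametric isosceles property together with density (for \(w \in \bar C\) with \(\tilde d(x,w)>0\) pick \(y \in X\) with \(\tilde d(w,y) < \tilde d(x,w)\), so \(\tilde d(x,y) = \tilde d(x,w) \leq \rho\) and \(y \in \bar B_\rho^X(x)\)), one gets \(\diam \bar B_\rho^X(x) = \rho\), whence \(\Psi(\bar B_\rho^X(x)) = \bar B_\rho^{\tilde X}(x) = \bar C\). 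For the diameter-zero balls, \(\{z\}\) with \(z \in \tilde X\) is approximated by \(\{x\} = \Psi(\{x\})\) with \(x \in X\) tending to \(z\), exactly as in the proof of Proposition~\ref{p3.21}; equivalently, \(\Psi\) restricted to \(\hat X = \{\{x\}\colon x \in X\}\) is the canonical dense embedding \(\hat X \hookrightarrow \hat{\tilde X}\) of Corollary~\ref{c3.7}. Hence \(\bar{\BB}_{\tilde X} \setminus \Psi(\bar{\BB}_X) \subseteq \{\{z\}\colon z \in \tilde X \setminus X\}\), which lies in the closure of \(\Psi(\bar{\BB}_X)\).

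Putting the three steps together, \((\bar{\BB}_{\tilde X}, \tilde d_H)\) is a complete metric space containing the dense isometric copy \(\Psi(\bar{\BB}_X)\) of \((\bar{\BB}_X, d_H)\), hence is isometric to the completion of \((\bar{\BB}_X, d_H)\), proving the conjecture. I expect the surjectivity of \(\Psi\) onto the positive-diameter balls of \(\tilde X\) to be the main obstacle: a priori one worries that completing \(X\) could create genuinely new balls of positive diameter, and ruling this out is exactly where both the strong triangle inequality and the density of \(X\) in \(\tilde X\) are indispensable; the rest of the argument is bookkeeping around Lemma~\ref{l6.12} and the uniqueness of completions.
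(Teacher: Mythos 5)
The paper gives no proof of this statement: Conjecture~\ref{con6.1} is posed as an open problem in Section~\ref{sec6}, so there is no argument of the author's to compare yours against, and I can only judge the proposal on its own terms. Judged so, it looks correct and essentially complete, and it would settle the conjecture affirmatively. All three pillars are sound. First, the completion \((\tilde X,\tilde d)\) is again ultrametric (the strong triangle inequality survives passage to limits of sequences from \(X\)), so Theorem~\ref{t3.13} applied to \((\tilde X,\tilde d)\) does give completeness of \((\bar{\BB}_{\tilde X},\tilde d_H)\). Second, your key lemma that the closure in \(\tilde X\) of \(\bar B_r^{X}(c)\) equals \(\bar B_r^{\tilde X}(c)\) is correct (just note separately the trivial case \(r=0\), where density gives no point \(x\) with \(\tilde d(z,x)\leqslant r\) unless \(z=c\)); combined with Lemma~\ref{l6.12} applied in both spaces, the fact that closures do not change diameters, and \(\tilde d|_{X\times X}=d\), it yields that \(\Psi\) is a well-defined isometric embedding, with injectivity from \(\Psi(\bar B)\cap X=\bar B\). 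Third, the density step is the real content and it is right: by Lemma~\ref{l6.12} a ball \(\bar C\in\bar{\BB}_{\tilde X}\) with \(\diam\bar C=\rho>0\) is \(\rho\)-isolated (this is the same rigidity as in Theorem~\ref{t3.11}), so density forces \(\Psi\) to be onto the positive-diameter balls, and you indeed get this: density of \(X\) and \(\rho>0\) give \(x\in X\cap\bar C\), Proposition~\ref{p2.4} and Lemma~\ref{l2.6} recenter \(\bar C=\bar B_\rho^{\tilde X}(x)\), and the key lemma gives \(\Psi(\bar B_\rho^{X}(x))=\bar C\); the remaining elements of \(\bar{\BB}_{\tilde X}\) are singletons \(\{z\}\) with \(z\in\tilde X\setminus X\), which are limits of \(\Psi(\{x\})\), \(x\in X\). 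Two small tightenings: the isosceles-property computation of \(\diam\bar B_\rho^{X}(x)\) is superfluous, since \(\Psi(\bar B_\rho^{X}(x))=\bar C\) already follows from the key lemma; and the final identification invokes the uniqueness (up to isometry) of the completion of \((\bar{\BB}_X,d_H)\), which should be cited explicitly. With those cosmetic additions, your argument is a proof of Conjecture~\ref{con6.1}.
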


\begin{conjecture}\label{con6.2}
Let \((X, d)\) and \((Y, \rho)\) be ultrametric spaces. Then the following statements are equivalent:
\begin{enumerate}
\item\label{con6.2:s1} The completions \((\tilde{X}, \tilde{d})\) and \((\tilde{Y}, \tilde{\rho})\) are isometric.
\item\label{con6.2:s2} The ultrametric spaces \((\bar{\BB}_{X}^{0}, d_H)\) and \((\bar{\BB}_{Y}^{0}, \rho_H)\) are isometric.
\end{enumerate}
\end{conjecture}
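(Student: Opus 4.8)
The plan is to reduce the conjecture, via completions, to the statement that a complete ultrametric space is determined up to isometry by its ballean, and then to reconstruct the space from the ballean by unfolding the tree of closed balls. First I would observe that, for any ultrametric space $(X,d)$, the metric spaces $(\bar{\BB}_X^0,d_H)$ and $(\bar{\BB}_{\tilde{X}}^0,\tilde{d}_H)$ are isometric: $\bar{\BB}_X^0$ is dense in $\bar{\BB}_X$ by Proposition~\ref{p3.21}, hence dense in the completion $\widetilde{\bar{\BB}_X}$, which is $(\bar{\BB}_{\tilde{X}},\tilde{d}_H)$ by Conjecture~\ref{con6.1}; since an isolated point of a dense subspace is isolated in the ambient space, $\bar{\BB}_X^0$ is a dense discrete subset of $\bar{\BB}_{\tilde{X}}$, and so equals $\bar{\BB}_{\tilde{X}}^0$ by Proposition~\ref{p3.21} and Corollary~\ref{c1.13}. (This step can also be carried out without Conjecture~\ref{con6.1}, by checking directly that $\bar{B}\mapsto\{y\in\tilde{X}\colon\tilde{d}(y,c)\le\diam\bar{B}\}$, with $c\in\bar{B}$, is a well-defined bijection $\bar{\BB}_X^0\to\bar{\BB}_{\tilde{X}}^0$ preserving $d_H$ --- by Lemma~\ref{l6.12}, using that each $\bar{B}$ is dense in its image and that passing to closures does not change diameters.) Next, for complete ultrametric spaces $(Z,d)$ and $(W,\rho)$, an isometry $(\bar{\BB}_Z^0,d_H)\cong(\bar{\BB}_W^0,\rho_H)$ is equivalent to an isometry $(\bar{\BB}_Z,d_H)\cong(\bar{\BB}_W,\rho_H)$: an isometry of balleans carries isolated points to isolated points, hence restricts to the $\bar{\BB}^0$'s by Theorem~\ref{t3.11}, and conversely an isometry of the $\bar{\BB}^0$'s extends to the completions, which are $\bar{\BB}_Z$ and $\bar{\BB}_W$ by Theorem~\ref{t3.13} and Proposition~\ref{p3.21}. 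Thus the conjecture becomes equivalent to the statement
\[
(Z,d)\cong(W,\rho)\quad\Longleftrightarrow\quad(\bar{\BB}_Z,d_H)\cong(\bar{\BB}_W,\rho_H)
\]
for complete ultrametric spaces, which I call $(\star)$. Its forward implication is routine: an isometry $\Phi\colon Z\to W$ gives the bijection $\bar{B}\mapsto\Phi(\bar{B})$ of balleans, which preserves $d_H$ because $\Phi$ preserves diameters and $d_H(\bar{B}_1,\bar{B}_2)=\diam(\bar{B}_1\cup\bar{B}_2)$ by Lemma~\ref{l6.12}.

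For the converse implication of $(\star)$ I would reconstruct the isometry type of $Z$ from $(\bar{\BB}_Z,d_H)$ by unfolding its tree of closed balls. The guiding principle is that $(\bar{\BB}_Z,d_H)$ is essentially the labelled tree of $Z$ --- vertices the closed balls, edges the covering relation, vertex labels the diameters --- which in the finite and totally bounded cases is exactly the Gurvich--Vyalyi representation and its extension in~\cite{Dov2025TBUSALFT}, and from which $Z$ can be read off up to isometry. Concretely, when $Z$ is bounded the ball $Z$ itself is metrically distinguished inside $\bar{\BB}_Z$ as an element lying at distance $\diam\bar{\BB}_Z=\diam Z$ from every other element (Lemma~\ref{l6.12}); deleting it splits $\bar{\BB}_Z$ along the relation ``$d_H<\diam Z$'' into clopen pieces, each of which is again, in good cases, the ballean $\bar{\BB}_{B_i}$ of a largest proper sub-ball $B_i$ of $Z$ (Propositions~\ref{p2.5} and~\ref{p2.12}, Corollary~\ref{c3.4}), and one recurses. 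A ballean isometry preserves all of this structure, so it induces a label-preserving isomorphism of the two trees, whence $Z\cong W$. The unbounded case is handled by writing $Z$ as an increasing union of bounded balls --- each of whose balleans sits inside $\bar{\BB}_Z$ as the structure just analysed --- and passing to a direct limit.

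Making this unfolding rigorous in full generality is the hard part, and I expect it to be the main obstacle. The tree of a complete ultrametric space can be wild: a vertex may have no largest proper sub-ball, so ``recursing into the children'' stalls at a limit stage that introduces no new node; a branch may have transfinite length and terminate at a singleton $\{z\}$ with $z\in\operatorname{acc}(Z)$, which the recursion never reaches; and symmetric pieces may occur --- when $d$ is equidistant, $(\bar{\BB}_Z,d_H)$ is again equidistant and $Z$ is not metrically distinguishable inside $\bar{\BB}_Z$ from its singletons, so the ``maximal element'' step must be replaced by a cardinality argument along the lines of Theorems~\ref{t4.5} and~\ref{t4.7}. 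The real work is to assemble the partial isometries coherently over this possibly ill-founded tree --- invoking completeness of $Z$ at the limit stages to pin down the points lying at the ends of infinite branches --- and to dispatch the exceptional symmetric configurations; in effect one has to establish that the correspondence between complete ultrametric spaces and labelled trees is compatible with the ballean construction with no finiteness or total-boundedness hypothesis.
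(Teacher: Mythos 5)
You are proving a statement that the paper itself leaves open: Conjecture~\ref{con6.2} appears in Section~\ref{sec6} among ``expected results and open questions,'' and no proof of it is given there, so your argument has to stand entirely on its own. Its preparatory part does: the isometry between \((\bar{\BB}_X^0,d_H)\) and \((\bar{\BB}_{\tilde X}^0,\tilde d_H)\) should be obtained by your parenthetical direct argument (the map \(\bar B\mapsto\{y\in\tilde X\colon \tilde d(y,c)\leqslant\diam\bar B\}\), i.e.\ essentially \(\bar B\mapsto\operatorname{cl}_{\tilde X}\bar B\), together with Lemma~\ref{l6.12} and the fact that isolated points of \(X\) stay isolated in \(\tilde X\)), \emph{not} via Conjecture~\ref{con6.1}, which is equally unproven in the paper and cannot be invoked. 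Likewise, for complete \(Z\) and \(W\) the equivalence between an isometry of \((\bar{\BB}_Z^0,d_H)\) with \((\bar{\BB}_W^0,\rho_H)\) and an isometry of the full balleans is correct, using Theorem~\ref{t3.11}, Proposition~\ref{p3.21} and Theorem~\ref{t3.13}. This legitimately reduces the conjecture to your statement \((\star)\): a complete ultrametric space is determined up to isometry by the metric space \((\bar{\BB}_Z,d_H)\).

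The genuine gap is that the nontrivial half of \((\star)\) --- reconstructing \(Z\), up to isometry, from the ballean --- is exactly where the whole content of the conjecture sits, and you do not prove it. What you offer is a heuristic ``unfolding'' of the tree of closed balls, followed by your own list of the places where it breaks down: vertices with no maximal proper sub-ball, ill-founded or transfinite branches terminating at points of \(\operatorname{acc}(Z)\), and equidistant pieces in which the top ball is not metrically distinguishable from the singletons (cf.\ Theorems~\ref{t4.5} and~\ref{t4.7}), so that only cardinality-type arguments are available; you then state explicitly that making this rigorous is the main obstacle. That is a reduction plus a research plan, not a proof: the coherent assembly of partial isometries over the ball tree, the treatment of limit stages via completeness, and the handling of the symmetric (equidistant) configurations are precisely the steps that would have to be carried out --- or a counterexample to \((\star)\) found --- before the conjecture can be considered settled. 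As written, the proposal leaves the conjecture open, which is also its status in the paper.
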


Theorem~\ref{t4.7} gives us the condition under which \((X, d)\) and \((\bar{\BB}_X, d_H)\) are isometric for equidistant \((X, d)\). The next question naturally arises.

\begin{question}\label{q6.3}
Let \((X, d)\) be an ultrametric space such that \((\bar{\BB}_X, d_H)\) and \((X, d)\) are isometric. Does it follows from this that \(d \colon X \times X \to \mathbb{R}^{+}\) is equidistant?
\end{question}

Let us consider now two isometric ultrametric spaces \((X, d)\) and \((Y, \rho)\). If \(F \colon X \to Y\) is an isometry of these spaces, then it is easy to prove that the mapping
\[
\bar{\BB}_X \ni \bar{B} \mapsto \{F(x) \colon x \in \bar{B}\} \in \bar{\BB}_Y
\]
is correctly defined isometry of the balleans \((\bar{\BB}_X, d_H)\) and \((\bar{\BB}_Y, \rho_H)\).

\begin{problem}\label{probl6.4}
Let \((X, d)\) and \((Y, \rho)\) be isometric ultrametric spaces. Find conditions under which for every isometry \(\mathbf{F} \colon \bar{\BB}_X \to \bar{\BB}_Y\) there is an isometry \(F \colon X \to Y\) such that
\begin{equation}\label{probl6.4:e1}
\mathbf{F}(\bar{B}) = \{F(x) \colon x \in \bar{B}\}
\end{equation}
for all \(\bar{B} \in \bar{\BB}_X\).
\end{problem}

\begin{remark}\label{r6.5}
If \((X, d)\) and \((Y, \rho)\) are isometric equidistant spaces and \(|X| \neq 1\), then, using Theorem~\ref{t4.5}, one can find an isometry \(\mathbf{F} \colon \bar{\BB}_X \to \bar{\BB}_Y\) such that for every isometry \(F \colon X \to Y\) equality~\eqref{probl6.4:e1} does not hold for some \(\bar{B} \in \bar{\BB}_X\).
\end{remark}

\section*{Funding}

The author was supported by grant 359772 of the Academy of Finland.


\begin{thebibliography}{10}

\bibitem{AAC+2024TGDBS}
S.~Arya, A.~Auddy, R.~A. Clark, S.~Lim, F.~M\'{e}moli, and D.~Packer.
\newblock {The Gromov---Wasserstein Distance Between Spheres}.
\newblock {\em Foundations of computational mathematics}, 2024.

\bibitem{BDC1991AGOBCMS}
G.~Beer and A.~Di~Concilio.
\newblock A generalization of boundedly compact metric spaces.
\newblock {\em Comment. Math. Univ. Carolin.}, 32(2):361--367, 1991.

\bibitem{BNLL1992AMPAIS}
G.~Beer, S.~A. Naimpally, A.~Lechicki, and S.~Levi.
\newblock Distance functionals and suprema of hyperspace topologies.
\newblock {\em Ann. Mat. Pura Appl., IV. Ser.}, 162:367--381, 1992.

\bibitem{BDD2017BBMSSS}
V.~K. Bhardwaj, S.~Dhawan, and O.~A. Dovgoshey.
\newblock {Density by moduli and Wijsman statistical convergence}.
\newblock {\em Bull. Belg. Math. Soc. Simon Stevin}, 24:393--415, 2017.

\bibitem{BD2023PSFMTMITGO}
V.~Bilet and O.~Dovgoshey.
\newblock Pseudometric spaces: From minimality to maximality in the groups of
  combinatorial self-similarities.
\newblock {\em Analysis and Geometry in Metric Spaces}, 11(1):20230103, 2023.

\bibitem{BD2023WAPACS}
V.~Bilet and O.~Dovgoshey.
\newblock When all permutations are combinatorial similarities.
\newblock {\em Bulletin of the Korean Mathematical Society}, 60(3):733--746,
  2023.

\bibitem{BD2024OMOMPF}
V.~Bilet and O.~Dovgoshey.
\newblock On monoids of metric preserving functions.
\newblock {\em Front. Appl. Math. Stat.}, 10:1420671, 2024.

\bibitem{BDK2022UACMG}
V.~Bilet, O.~Dovgoshey, and Y.~Kononov.
\newblock {Ultrametrics and Complete Multipartite Graphs}.
\newblock {\em Theory and Applications of Graphs}, 9(1), 2022.
\newblock Article 8.

\bibitem{DLPS2008TaiA}
C.~Delhomm\'{e}, C.~Laflamme, M.~Pouzet, and N.~Sauer.
\newblock {Indivisible ultrametric spaces}.
\newblock {\em Topology and its Applications}, 155(14):1462--1478, 2008.

\bibitem{DD2009EOD}
M.~M. Deza and E.~Deza.
\newblock {\em {Encyclopedia of Distances}}.
\newblock Springer, 2016.

\bibitem{DDP2011pNUAA}
D.~Dordovskyi, O.~Dovgoshey, and E.~Petrov.
\newblock Diameter and diametrical pairs of points in ultrametric spaces.
\newblock {\em p-adic Numbers Ultrametr. Anal. Appl.}, 3(4):253--262, 2011.

\bibitem{Dov2019pNUAA}
O.~Dovgoshey.
\newblock {Finite ultrametric balls}.
\newblock {\em p-adic Numbers Ultrametr. Anal. Appl.}, 11(3):177--191, 2019.

\bibitem{Dov2020TaAoG}
O.~Dovgoshey.
\newblock Isomorphism of trees and isometry of ultrametric spaces.
\newblock {\em Theory and Applications of Graphs}, 7(2), 2020.
\newblock Article 3.

\bibitem{Dov2024SUPF}
O.~Dovgoshey.
\newblock Strongly ultrametric preserving functions.
\newblock {\em Topology and its Applications}, 351:108931, 2024.

\bibitem{Dov2025TBUSALFT}
O.~Dovgoshey.
\newblock Totally bounded ultrametric spaces and locally finite trees.
\newblock {\em arXiv:2502.04228}, pages 1--114, 2025.

\bibitem{DCR2025CUSGBLSG}
O.~Dovgoshey, O.~Cantor, and O.~Rovenska.
\newblock Compact ultrametric spaces generated by labeled star graphs.
\newblock {\em arXiv:2504.02425}, pages 1--19, 2025.

\bibitem{DK2023LFUSALT}
O.~Dovgoshey and A.~Kostikov.
\newblock Locally finite ultrametric spaces and labeled trees.
\newblock {\em Journal of Mathematical Sciences}, 276(5):614--637, 2023.

\bibitem{DK2024DLPSSAG}
O.~Dovgoshey and A.~Kostikov.
\newblock {Delhomme---Laflamme---Pouzet---Sauer space as groupoid}.
\newblock {\em Journal of Mathematical Sciences}, 284(3):315--328, 2024.

\bibitem{DK2022LTGCCaDUS}
O.~Dovgoshey and M.~K\"{u}\c{c}\"{u}kaslan.
\newblock Labeled trees generating complete, compact, and discrete ultrametric
  spaces.
\newblock {\em Annals of Combinatorics}, 26:613--642, 2022.

\bibitem{DL2020CCOP}
O.~Dovgoshey and J.~Luukkainen.
\newblock Combinatorial characterization of pseudometrics.
\newblock {\em Acta Math. Hungar}, 161(1):257--291, 2020.

\bibitem{DR2025LTGSALFU}
O.~Dovgoshey and O.~Rovenska.
\newblock {Labeled Trees Generating Separable and Locally Finite Ultrametrics}.
\newblock {\em arXiv:2506.03853}, pages 1--16, 2025.

\bibitem{DS2022TRoUCaS}
O.~Dovgoshey and V.~Shcherbak.
\newblock The range of ultrametrics, compactness, and separability.
\newblock {\em Topology and its Applications}, 305:107899, 2022.

\bibitem{DV2025TBUSGBLR}
O.~Dovgoshey and V.~Vito.
\newblock Totally bounded ultrametric spaces generated by labeled rays.
\newblock {\em Appl. Gen. Topol.}, 26(1):163--182, 2025.

\bibitem{Edw1975}
D.~Edwards.
\newblock The structure of superspace.
\newblock In N.~M. Stravrakas and K.~R. Allen, editors, {\em Studies in
  Topology}, pages 121--133. Academic Press, New York, 1975.

\bibitem{Eng1989}
R.~Engelking.
\newblock {\em {General Topology}}, volume~6 of {\em Sigma Series in Pure
  Mathematics}.
\newblock Heldermann Verlag, Berlin, 1989.

\bibitem{Gro1981PMIHS}
M.~Gromov.
\newblock Groups of polynomial growth and expanding maps.
\newblock {\em Publ. Math. Inst. Hautes \'{E}tudes Sci.}, 53(1):53--78, 1981.

\bibitem{GV2012DAM}
V.~Gurvich and M.~Vyalyi.
\newblock Characterizing (quasi-)ultrametric finite spaces in terms of
  (directed) graphs.
\newblock {\em {Discrete Appl. Math.}}, 160(12):1742--1756, 2012.

\bibitem{Hau1914}
F.~Hausdorff.
\newblock {\em {Grundz\"{u}ge der Mengenlehre}}.
\newblock Veit, Leipzig, 1914.
\newblock (Reprint: Chelsea, New York, 1949).

\bibitem{Hen1999CATBOTHM}
J.~Henrikson.
\newblock {Completeness and Total Boundedness of the Hausdorff Metric}.
\newblock {\em MIT Undergraduate Journal of Mathematics}, pages 69--80, 1999.

\bibitem{HKR1993CIUTHD}
D.~P. Huttenlocher, G.~A. Klanderman, and W.~J. Rucklidge.
\newblock {Comparing images using the Hausdorff distance}.
\newblock {\em IEEE Transactions on Pattern Analysis and Machine Intelligence},
  15(9):850--863, 1993.

\bibitem{Ish2023COUUUS}
Yoshito Ishiki.
\newblock Constructions of {Urysohn} universal ultrametric spaces.
\newblock {\em \(p\)-Adic Numbers Ultrametric Anal. Appl.}, 15(4):266--283,
  2023.

\bibitem{Jec2003ST}
T.~Jech.
\newblock {\em Set theory}.
\newblock Springer Monographs in Mathematics. Springer, Berlin, 2003.

\bibitem{Kai2023ACTOMS}
S.~P.~S. Kainth.
\newblock {\em {A Comprehensive Textbook on Metric Spaces}}.
\newblock Springer, Singapore, 2023.

\bibitem{LL1987BUMI}
A.~Lechicki and S.~Levi.
\newblock Wijsmann convergence in the hyperspace of a metric space.
\newblock {\em Bull. Unione Mat. Ital.}, 1-B:439--452, 1987.

\bibitem{MSD2023CSOUSWA}
Y.~Ma, J.~Siegert, and J.~Dydak.
\newblock Coarse structure of ultrametric spaces with applications.
\newblock {\em Eur. J. Math.}, 9(1), 2023.
\newblock Article number 5.

\bibitem{Mem2011GDATMATOM}
F.~M\'{e}moli.
\newblock {Gromov---Wasserstein distances and the metric approach to object
  matching}.
\newblock {\em Foundations of computational mathematics}, 11(4):417--487, 2011.

\bibitem{MMWW2023TUGD}
F.~M\'{e}moli, A.~Munk, Z.~Wan, and C.~Weitkamp.
\newblock {The Ultrametric Gromov---Wasserstein Distance}.
\newblock {\em Discrete \& Computational Geometry}, 70:1378--1450, 2023.

\bibitem{PB2003}
I.~Protasov and T.~Banakh.
\newblock {\em Ball structures and colorings of groups and graphs}, volume~11
  of {\em Math. Stud. Monogr.}
\newblock VNTL, Lviv, 2003.

\bibitem{PP2020MS}
I.~Protasov and K.~Protasova.
\newblock Closeness and linkness in balleans.
\newblock {\em Matematychni Studii}, 53(1):100--108, 2020.

\bibitem{PZ2007}
I.~Protasov and M.~Zarichnyi.
\newblock {\em {General Asymptopogy}}, volume~12 of {\em Math. Stud. Monogr.}
\newblock VNTL, Lviv, 2007.

\bibitem{Qiu2009pNUAA}
D.~Qiu.
\newblock {Geometry of non-Archimedian Gromov--Hausdorff distance}.
\newblock {\em p-adic Numbers Ultrametr. Anal. Appl.}, 1(4):317--337, 2009.

\bibitem{Qiu2014pNUAA}
D.~Qiu.
\newblock {The structures of Hausdorff metric in non-Archimedian spaces}.
\newblock {\em p-adic Numbers Ultrametr. Anal. Appl.}, 6(1):33--53, 2014.

\bibitem{Ruc1996EVRUTHD}
W.~Rucklidge.
\newblock {\em {Efficient visual recognition using the Hausdorff distance}}.
\newblock Number 1173 in Lecture Notes in Computer Science. Springer, Berlin,
  1996.

\bibitem{Sch1985}
W.~H. Schikhof.
\newblock {\em {Ultrametric Calculus. An Introduction to p-Adic Analysis}}.
\newblock Number~4 in Cambridge Studies in Advanced Mathematics. Cambridge
  University Press, 1984.

\bibitem{Sea2007}
M.~\'{O}. Searc\'{o}id.
\newblock {\em {Metric Spaces}}.
\newblock Springer---Verlag, London, 2007.

\end{thebibliography}
\end{document}